\newcommand{\notodo}[1]{\todo[backgroundcolor=white]{#1}}
\newlength{\dhatheight}
\newcommand{\Def}[1]{\textcolor{Periwinkle}{\textit{#1}}}
\newcommand{\ZZ}{\mathbb{Z}}    
\newcommand{\RR}{\mathbb{R}}    
\newcommand{\F}{\mathcal{F}}    
\newcommand{\Sph}{\mathbb{S}}    
\newcommand{\bs}[1]{\boldsymbol{#1}}
\newcommand{\hatij}{{\ensuremath{\kern1pt\widehat{\kern-1pt\imath\kern-2.3pt\jmath}}}}
\newcommand{\Tsymb}{\top}
\newcommand{\T}{^{\Tsymb}}
\newcommand{\mT}{^{-\Tsymb}}
\newcommand*{\horzbar}{\rule[.5ex]{2.5ex}{0.4pt}}
\def\:{\colon}
\newcommand{\cupdot}{\mathbin{\mathaccent\cdot\cup}}
\newcommand{\labelstyle}[1]{\upshape(\textit{#1})}
\newcommand{\mylabel}{\labelstyle{\roman*}}
\newenvironment{myenumerate}{\begin{enumerate}[label=\mylabel]}{\end{enumerate}}
\def\itm#1{{\labelstyle{\romannumeral#1\relax}}}
\def\nlspace{\nolinebreak\space}
\def\nls{\nlspace}
\newcommand{\freespace}{\kern.07em} 
\newcommand{\free}{\freespace\cdot\freespace} 
\newcommand{\ul}[1]{\underline{\smash{#1}}}
\numberwithin{equation}{section}
\newtheoremstyle{mythmstyle} 
    {\parsep}                    
    {\parsep}                    
    {\itshape}                   
    {}                           
    {\bfseries\scshape}          
    {.}                          
    {.5em}                       
    {}  
\newtheoremstyle{mydefstyle} 
    {\parsep}                    
    {\parsep}                    
    {}                   
    {}                           
    {\mdseries\scshape}          
    {.}                          
    {.5em}                       
    {}  
\theoremstyle{theorem}
\newtheorem{theorem}{Theorem}[section]
\newtheorem{corollary}[theorem]{Corollary}
\newtheorem{lemma}[theorem]{Lemma}
\newtheorem{proposition}[theorem]{Proposition}
\theoremstyle{definition}
\newtheorem{example}[theorem]{Example}
\newtheorem{remark}[theorem]{Remark}
\newtheorem{question}[theorem]{Question}
\newtheorem{observation}[theorem]{Observation}
\crefname{theorem}{Theorem}{Theorems}
\crefname{proposition}{Proposition}{Propositions}
\crefname{lemma}{Lemma}{Lemmas}
\crefname{corollary}{Corollary}{Corollaries}
\crefname{remark}{Remark}{Remarks}
\crefname{example}{Example}{Examples}
\crefname{definition}{Definition}{Definitions}
\crefname{problem}{Problem}{Problems}
\crefname{observation}{Observation}{Observation}
\crefname{construction}{Construction}{Construction}
\theoremstyle{theorem}
\providecommand{\customgenericname}{}
\newcommand{\newcustomtheorem}[2]{%
  \newenvironment{#1}[1]
  {%
   \renewcommand\customgenericname{#2}%
   \renewcommand\theinnercustomgeneric{##1}%
   \innercustomgeneric
  }
  {\endinnercustomgeneric}
}
\DeclareMathOperator{\aff}{aff}
\DeclareMathOperator{\conv}{conv}
\DeclareMathOperator{\cone}{cone}
\DeclareMathOperator{\GL}{GL}
\DeclareMathOperator{\Id}{Id}
\DeclareMathOperator{\vol}{vol}  
\DeclareMathOperator{\len}{len}  	
\DeclareMathOperator{\Int}{int}
\DeclareMathOperator{\adj}{adj}
\let\eset=\varnothing
\def\...{...}
\newcommand{\shortStyle}{\textit}
\newcommand{\ie}{\shortStyle{i.e.,}}
\newcommand{\eg}{\shortStyle{e.g.}}
\newcommand{\cf}{\shortStyle{cf.}}
\let\angle=\measuredangle
\renewcommand*{\eqref}[1]{%
  \hyperref[{#1}]{\textup{\tagform@{\ref*{#1}}}}%
}
\newcommand{\ndots}{\!\!\!\!\cdots\!\!\!\!}
\newcommand{\bdots}{\cdots\!\!\!\!}
\newcommand{\edots}{\!\!\!\!\cdots}
\renewcommand{\P}{\mathcal P}
\newcommand{\mcolor}{OrangeRed}
\newcommand{\msays}[1]{{\footnotesize\textcolor{\mcolor}{\textbf{M:} #1}}}
\newcommand{\acolor}{ForestGreen}
\newcommand{\asays}[1]{{\footnotesize\textcolor{\acolor}{\textbf{A:} #1}}}
\newcommand{\TODO}{\textcolor{red}{\footnotesize TODO}}
\DeclareMathOperator{\drop}{drop}
\newcommand{\addresseshere}{%
  \enddoc@text\let\enddoc@text\relax
}
\begin{document}

	
\title[The canonical form and scissors congruence of polytopes]{The canonical form, scissors congruence and adjoint degrees of polytopes}
		
\author{T.\ Baumbach}
\address{Mathematics Institute, Technische Universität Berlin, Straße des 17.\ Juni 135, 10623 Berlin, Germany}
\email{baumbach@math.tu-berlin.de }

\author{A.\ Freyer}
\address{Fachbereich Mathematik und Informatik, Freie Universit\"at Berlin, Arnimallee 2, 14195 Berlin, Germany}
\email{a.freyer@fu-berlin.de}

\author{J.\ Weigert}
\address{Max-Planck-Institut für Mathematik in den Naturwissenschaften, Inselstraße 22, 04103 Leipzig
Germany}
\email{julian.weigert@mis.mpg.de}

\author{M.\ Winter}
\address{Mathematics Institute, Technische Universität Berlin, Straße des 17.\ Juni 135, 10623 Berlin, Germany}
\email{winter@math.tu-berlin.de}

\subjclass[2010]{51M20, 52B45, 52B12, 14P05} 

%

\keywords{Canonical forms, adjoint polynomials, scissors congruence for polytopes, valuations}
		
\date{\today}
\begin{abstract}
We study the canonical form $\Omega$ as a valuation in the context~of scissors congruence for polytopes.
We identify the degree of its numerator -- the adjoint polynomial  $\adj_P$ -- as an important invariant in this context.\nls
More precisely, for a polytope $P$ we define the \emph{degree drop} that measures~how much smaller than expected the degree of the adjoint polynomial of $P$ is.
We show that this drop behaves well under various operations, such as decompositions, restrictions to faces, projections, products and Minkowski sums.
Next we define the \emph{reduced canonical form} $\Omega_0$ and show that it is a translation-invariant 1-homogeneous valuation on polytopes that vanishes if and only if $P$ has positive degree drop.
Using it we can prove that zonotopes can be characterized as the $d$-polytopes that have maximal possible degree drop $d-1$.
We obtain a decomposition formula for $\Omega_0$ that expresses it as a sum of edge-local quantities of $P$.
Finally, we discuss valuations $\Omega_s$ that can distinguish higher values of the degree drop.
\end{abstract}


\maketitle


\section{Introduction}

The \emph{canonical form} $\Omega$ of a polytope $P\subset\RR^d$ is a rational function that measures the volume of the polar dual as a function of the polarization point $x\in\Int(P)$: 
%
%
$$\Omega(P;x):=d!\vol(P-x)^\circ=\frac{\adj_P(x)}{\prod_{F\subset P} L_F(x)}.$$
%
Its numerator $\adj_P$ is known as the \emph{adjoint polynomial} of $P$. 
The canonical form~was initially introduced by Arkani-Hamed, Bai and Lam \cite{arkani2017positive} in the study of positive geo\-metries and scattering amplitudes in quantum field theory.
The adjoint polynomial in turn arose in geometric modeling, initially defined for polygons by Wachspress \cite{wachspress1975rational}, and later generalized to general polytopes and polyhedra by Warren \cite{warren1996barycentric}.\nls
Both concepts have since emerged in a number of other geometric and algebraic contexts, including~geo\-metric rigidity theory \cite{winter2024rigidity}, intersection theory \cite{aluffi2013segre,aluffi2016segre}, and algebraic~statistics \cite{kohn2018moment}.
%

\iftrue 

In this article we focus on the non-trivial fact that the canonical form defines~also a \textit{simple valuation} on convex polytopes.
In the form relevant to us, this means~that whenever $P_1,...,P_n\subset\RR^d$ are convex polytopes with disjoint interiors, so that $P_1\cupdot\cdots\cupdot P_n$ is convex as well, we have
%
%
$$\Omega(P_1\cupdot\cdots\cupdot P_n)=\Omega(P_1)+\cdots+\Omega(P_n).$$
%

\else

In this article we focus on the non-trivial fact that the canonical form defines~also a \textit{simple valuation} on convex polytopes.
This means that $\Omega$ vanishes on polytopes with empty interior, and we have
%
%
\[
\Omega(P\cup Q) = \Omega(P) + \Omega(Q) - \Omega(P\cap Q)
\]
for any convex polytopes $P,Q\subset\RR^d$ such that $P\cup Q$ is convex.
In particular, it follows that whenever $P_1,...,P_n\subset\RR^d$ are convex polytopes with disjoint interiors, then
%
%
$$\Omega(P_1\cupdot\cdots\cupdot P_n)=\Omega(P_1)+\cdots+\Omega(P_n).$$
%

\fi

A number of famous results on polytope decompositions and scissors invariance have their roots in valuation theory, perhaps most notably, Max Dehn's resolution of Hilbert's third problem on the equidecomposability of 3-polytopes \cite{boltianski,hadwiger}.
%
One objective of this article is to explore what similarly flavored results can be derived using specifically the valuative nature of the canonical form.
%
%
%
%
%
%
%
%
The following recurring example should give a good idea of what we are aiming for:
%

\newcommand{\SAM}{\mathrm S}
\newcommand{\OSAM}{\smash{\mathrm{\tilde S}}}

\begin{example}
    \label{ex:cs}
    Let $P_1,...,P_n\subset\RR^d$ be \Def{centrally symmetric} polytopes (that means, $P_i$ and $-P_i$ are translates of each other).
    It is a remarkable and somewhat surprising fact that if a larger \emph{convex} polytope $P=P_1\cupdot \cdots\cupdot P_n$ is built from the $P_i$ without overlapping interiors, then it is itself centrally symmetric.
    
    This can be shown in different ways, but
    one of the most elegant proofs involves valuation theory: let $\SAM(P)$ be the so-called \emph{surface area measure} of $P$. That is,\nls $\SAM(P)$ is the measure on $\Sph^{d-1}$ that to each subset $N\subset\Sph^{d-1}$ assigns the area of the part~of the surface of $P$ that has unit normals that lie in $N$. 
    In our case, where~$P$~is~a~polytope, $\SAM(P)$ is the sum of finitely many Dirac measures, one per facet of $P$.
    Based on this we define the \emph{odd surface area measure} $\OSAM(P):=\SAM(P)-\SAM(-P)$.
    One can~show that $\OSAM$ defines a simple valuation.
    Clearly, since all $P_i$ are centrally symmetric,\nls we have $\OSAM(P_i)=0$.
    Less obvious however, a converse holds as well: if $\OSAM(P)=0$, then $P$ is centrally symmetric.
    This can be proven using Minkowski's uniqueness theorem.
    
    The fact that $P=P_1\cupdot \cdots\cupdot P_n$ inherits its central symmetry from the pieces $P_i$ is now reduced to a one-line argument:
    $$\OSAM(P)=\OSAM(P_1\cupdot\cdots\cupdot P_n) = \sum_i \OSAM(P_i) = 0.$$

    This technique can actually show something stronger: suppose that $P$ and $Q$ are \Def{translation scissors congruent}, that is, $P=P_1\cupdot\cdots\cupdot P_n$ and $Q=Q_1\cupdot \dots\cupdot Q_n$ so that $P_i=Q_i+t_i$ are related through translations along vectors $t_i\in\RR^d$.
    Clearly, $\OSAM$ is \Def{translation-invariant}, that is $\OSAM(P+t)=\OSAM(P)$ for all $t\in\RR^d$.
    Through
    $$\OSAM(P)=\sum_i \OSAM(P_i) = \sum_i \OSAM(Q_i+t_i) = \sum_i\OSAM(Q_i)=\OSAM(Q)$$
    we therefore find that $P$ is centrally symmetric if and only if $Q$ is. 
    More remarkably, we made no assumptions about whether the $P_i$ and $Q_i$ are centrally symmetric.
    We say that central symmetry is a \Def{translation scissors invariant}.
\end{example}

%
%

%
When attempting to prove similar decomposition results using the canonical~form, 
initially two obstacles are encountered:
\begin{myenumerate}
    \item A typical way to obtain translation scissors invariants is by identifying them as the vanishing of a valuation (such as central symmetry is characterized as the vanishing of the symmetrized surface area measures in \cref{ex:cs}). 
    However, $\Omega$ only vanishes on polytopes with empty interior.
    \item $\Omega$ is \emph{not} translation-invariant, and hence one might not expect new results about translation scissors congruence from its study.
\end{myenumerate}
In this article we shall overcome both of these obstacles and thereby discover~interesting new mathematics.

For \itm1 we show that instead of the vanishing of $\Omega$, the degree of the numerator (\ie\ the degree of the adjoint polynomial $\adj_P$) can be controlled in decompositions and presents itself as a new and interesting polytope invariant.
From this we derive the quantity $\drop(P)$, a purely affine notion that measures how much~``smaller~than expected'' the adjoint degree is.
It turns out that this \emph{adjoint degree drop} is subtly related to parallelisms in the face structure of $P$ and in many ways behaves better than the adjoint degree itself. 
Lastly, we note that the adjoint degree comes up independently as a relevant quantity also in other contexts, such as in the construction of polynomial barycentric coordinates \cite{baumbachwinter2025rational}.
%
%
%

To overcome \itm2 we construct from $\Omega$ a new truly translations-invariant valuation $\Omega_0$.
It will serve us as a powerful tool in the study of adjoint degrees and will also yield some results on translation scissors congruence.
For example, we will show that being a zonotope in dimension three is translations scissors invariant.
We will also prove a decomposition formula for $\Omega_0$ that allows us to compute its value from data that is only local to the edges of $P$ (that is, edge lengths and tangent cones).


\subsection{Overview of the paper} 

In \cref{sec:canonical_form} we recall the \textit{canonical form} $\Omega$ and its essential properties in so far as they are relevant for this article.

Since the vanishing of $\Omega$ does not encode useful information about the polytope, we instead study the degree of its numerator -- the \textit{adjoint polynomial}. 
In \cref{sec:degree_drops} we recall what is known about this degree and introduce the \textit{degree drop}, denoted $\drop(P)$, that measures how much smaller than expected the adjoint degree is.
We show that the degree drop behaves well under polytope decomposition (\cref{res:drop_tiling}) and with respect to several other polytope operations (\cref{res:drop_properties}). We compute the drop for some classes of polytope.

In \cref{sec:Omega0} we introduce the \textit{reduced canonical form} $\Omega_0$, which is a translation-invariant valuation derived from the homogenization of $\Omega$.
By construction, $\Omega_0(P)$ vanishes if and only if $\drop(P)>0$, which shows that $\drop(P)>0$ is a translation scissors invariant.
The main result of \cref{sec:Omega0} is the characterization of zonotopes as those polytopes that have the maximal possible degree drop (\cref{res:zonotopes_iff_drop_d_1}).
As a consequence we find that the exact value of $\drop(P)$ (not only whether it is zero or non-zero) is a translation scissors invariant in dimension $d\le 3$ (\cref{res:d_le_3_preserves_drop}).

In \cref{sec:homogeneous} we prove that $\Omega_0$ is a 1-homogeneous valuation.
This opens up~its study to the powerful tools of valuation theory.
For example, we obtain that $\Omega_0$~is Minkowski additive, which allows us to construct many more polytopes~with non-zero drop.
The main work of this section is the discussion and derivation~of~a~remar\-kable decomposition formula for $\Omega_0$ (\cref{res:edge_decomposition}).
Applied to~simplices,\nls it~yields a curious identity involving quadratic forms and determinants (\cref{res:simplex_qudratic_forms}).

In \cref{sec:central_inversion} we study how $\Omega_0$ behaves under central inversion of the polytope.
Based on this, in dimension $d=3$ we achieve a full characterization of the polytope classes defined by their value of $\drop(P)$ (\cref{res:characterize_drop_3D}).

A disadvantage of $\Omega_0$ is that it does not ``see'' the exact value of the degree drop, but only whether it is zero or not.
In \cref{sec:higher_valuations} we introduce a natural generalization $\Omega_s$ that too is a valuation, and vanish on polytopes precisely if $\drop(P)\ge s$.\nls
These valuations are no longer translation-invariant, but exhibit a polynomial~behavior under translation in the sense of Khonvaskii and Pukhlikov \cite{khovanskiipukhlikov}.

Lastly, some more elaborate, yet elementary or well-known computations that~we felt would distract from our main points have been moved to \cref{sec:appendix_simplices,sec:appendix_facet_restriction,sec:appendix_adjoint_product,sec:appendix_homogenized_Omega,sec:appendix_lij_adj_Tij,sec:appendix_Dij,sec:appendix_triangle_adjoint}.


\section{Dual volume and the canonical form $\Omega$}
\label{sec:canonical_form}

For a convex $d$-dimensional polyhedron $P\subseteq\RR^d$
$$
P^\circ:=\{x\in\RR^d\mid \text{$\<x,y\>\le 1$ for all $y\in P$}\}.
$$
denotes the \emph{polar dual} with \textit{polarization center} at the origin.
In this sense, $(P-x)^\circ$ is the polar dual with polarization center at $x\in\RR^d$.
For $x\in\Int(P)$, the polar dual is a convex polytope (in particular, bounded) and its volume is a rational~function in $x$.
Based on this, we define the \Def{canonical form $\Omega(P;x)$} as the unique~rational~func\-tion that evaluates to
\begin{equation}
\Omega(P;x) = d! \vol(P-x)^\circ,
\end{equation}
for $x\in \Int(P)$, and is constant $\Omega(P)\equiv0$ if $\Int(P)=\eset$.
Here, $\vol$ denotes the~Euclidean volume, and the factor $d!$ is included to align with the definition given in \cite[Definition 2.5]{gao2024dual} where the measure is normalized to evaluate to 1 on the standard simplex.
This is moreover consistent with the definition of canonical form~in~Posi\-tive Geometry~\cite{arkani2017positive} where it is more commonly defined as a rational $d$-form. 
As~we~are solely concerned with its algebraic properties, we prefer to work with~$\Omega$~as~a~rational function.
The precise relation is clarified by \cite[Theorem 5.3]{gao2024dual}.

If $\Int(P)\not=\eset$ and $x$ approaches the boundary of $P$, the volume of the polar dual diverges.
Hence $\Omega(P;x)$ has poles along the facet hyperplanes.
One can show that these are the only poles, and by factoring them out we obtain a polynomial known as the \Def{adjoint~polynomial} or just \Def{adjoint $\adj_P$} of $P$:
\begin{equation}
\label{eq:Omega_represenation}
\Omega(P;x)=\frac{\adj_P(x)}{\prod_{F\subset P} L_F(x)}.    
\end{equation}
Here the product in the denominator iterates over all facets $F$ of $P$, and $L_F(x):=h_F-\<u_F,x\>$ is the facet-defining linear form, with $u_F\in\Sph^{d-1}$ being the facet's unit normal vector (pointing outwards), and $h_F\in\RR$ the height of $F$ over the origin.
%
%


Computing the canonical forms and adjoints for simplices and simplicial cones~is a standard exercise. 
Here we state the result (see \cref{sec:appendix_simplices} for derivations):

\begin{example}
    \label{ex:simplices}
    For a simplex $\Delta:=\{x\in\RR^d\mid \<x,u_i\>\le h_i\}$ we have 
    \begin{equation}
    \label{eq:adjoint_simplex}
    \Omega(P;x) = \frac{
        \adj_\Delta
    }{\;\;\,\prod_i (h_i-\<x,u_i\>)},
    \quad\text{with }\!
    \adj_\Delta=\left|\,
        \det\!\begin{pmatrix}
            | & & | \\[-0.5ex]
            u_0 & \ndots & u_d \\
            | & & | \\[1ex]
            h_0 & \ndots & h_d
        \end{pmatrix}
    \right|.
    \end{equation}
    Similarly, for a simplicial cone $C:=\{x\in\RR^d\mid \<x,u_i\>\le 0\}$ we have
    \begin{equation}
    \label{eq:adjoint_simplicial_cone}
    \Omega(C;x) = \frac{
        \adj_C
    }{(-1)^{d}\prod_i \<x,u_i\>},
    \quad\text{with }\!
    \adj_C=\left|\,
        \det\!\begin{pmatrix}
            | & & | \\[-0.5ex]
            u_1 & \ndots & u_d \\
            | & & | \\[1ex]
        \end{pmatrix}
    \right|.
    \end{equation}
\end{example}
    
Note that according to \cref{ex:simplices} the canonical form of a simplex or simplicial cone is independent of the length of the normals.
The adjoint however is a number and scales with the lengths of the normals.
In the following, whenever we refer~to~the precise value of an adjoint without specifying normals, we shall assume that they are normalized to length one.
This is relevant, for example, when we discuss~geometric interpretations for the adjoint (see \cref{sec:decomposition_on_simplices} and \cref{sec:appendix_triangle_adjoint}).

\iftrue 

A most remarkable fact about canonical forms, and core motivation for this~article, is that $\Omega$ defines a \Def{simple valuation} on convex polyhedra. 
%
%
That it is a \emph{valuation} means that whenever $P,Q,P\cap Q$ and $P\cup Q$ are convex polyhedra, then
$$\Omega(P)+\Omega(Q)=\Omega(P\cap Q)+\Omega(P\cup Q).$$
It is \emph{simple} because it vanishes on polytopes with empty interior.
That dual volumes yield such simple valuations had been observed before (\eg\ proven by Ludwig \cite{ludwig2002valuations}),
but was rediscovered in the context of Positive Geometry by Arkani-Hamed, Bai~and Lam \cite[Equation 3.6]{arkani2017positive}.
%
For us, the relevant consequence of these properties (which one may as well take as the definition of simple valuation) is the following:


\begin{theorem}
    \label{res:Omega_valuation}
    Whenever $P_1,...,P_n\subseteq\RR^d$ and $P_1\cupdot\cdots\cupdot P_n$ are convex polyhedra (where $\cupdot$ denotes that the $P_i$ have pair-wise disjoint interior), then
    \begin{align}
        \label{eq:valuation}
        \Omega(P_1\cupdot\cdots\cupdot P_n) = \Omega(P_1)+\cdots+\Omega(P_n).
    \end{align}    
    %
    %
\end{theorem}

\else

\todo{\msays{Add full definition of ``valuation'' and ``simple''}}%
A most remarkable fact about the canonical form, and core motivation for this article, is that it defines a valuation on convex polyhedra. 
%
%
That dual volumes~yield valuations had been known before (\eg\ proven by Ludwig \cite{ludwig2002valuations}),
but was rediscovered in the context of Positive Geometry by Arkani-Hamed, Bai and Lam \cite{arkani2017positive}:

\begin{theorem}
    \label{res:Omega_valuation}
    Whenever $P_1,...,P_n\subseteq\RR^d$ and $P_1\cupdot\cdots\cupdot P_n$ are convex polyhedra (where $\cupdot$ denotes that the $P_i$ have pair-wise disjoint interior), then
    \begin{align}
        \label{eq:valuation}
        \Omega(P_1\cupdot\cdots\cupdot P_n) = \Omega(P_1)+\cdots+\Omega(P_n).
    \end{align}    
    %
    %
\end{theorem}

\fi

In particular, the canonical form (and adjoint) of any polytope or polyhedron~can be computed by decomposing it into simplices and simplicial cones, computing $\Omega$ on the pieces using \cref{ex:simplices}, and adding up the results according to \eqref{eq:valuation}.

The canonical form behaves furthermore well under restriction to facets.
If~$F\subset P$ is a facet of $P$, then the canonical form of $F$ (considered here as a full-dimensional polytope in the affine span of $F$) can be obtained as the residue of $\Omega(P)$ at $F$.
That means, we drop the factor $L_F$ from the denominator and restrict to $F$:
\iffalse 
\begin{equation}
\label{eq:Omega_on_face}
\Omega^{d-1}(F;x) 
= \frac1d \frac{\adj_P(x)|_F}{\prod_{\substack{G\subset P\\G\not=F}} L_G|_F}.    
\end{equation}
where the factor of $1/d$ is necessary due to the volume normalization in \eqref{eq:omega_def}.%
\todo{\msays{Is the factor $1/d$ correct in \eqref{eq:Omega_on_face}?}}%
\nls
Also, we include here a superscript $d-1$ to indicate that $\Omega^{d-1}(F)$ is computed within~the ambient space $\aff(F)\simeq\RR^{d-1}$\!\!.
This distinction is important since $\Omega^d(F)=0$.\nls
We~will usually not include these superscripts when they are clear from context.
Recall also that the adjoint is only defined if $\Int(P)\not=\eset$, and so an analogous indication~of~dimension is not necessary.
%
%
\else
\begin{equation}
\label{eq:Omega_on_face}
\Omega^{d-1}(F) 
= \frac{\adj_P|_F}{\prod_{\substack{G\subset P\\G\not=F}} L_G|_F}.    
\end{equation}
%
We include here superscripts to indicate that $\Omega^{d-1}(F)$ is computed within the~ambient space $\aff(F)\simeq\RR^{d-1}$\!.
This distinction is important since \mbox{$\Omega^d(F)=0$.
We} will usually not include these superscripts when they are clear from context.
Recall also that the adjoint is only defined if $\Int(P)\not=\eset$, and so an analogous indication~of~dimension is not necessary.
Finally, we highlight the subtlety that \eqref{eq:Omega_on_face} holds without a scaling factor only because we assume the normal $u_F$ to be of unit~length~(since we found our setting to be not quite standard, a proof of this is given in \cref{sec:appendix_facet_restriction}).
\fi

Finally, for later use we also record here the behavior of $\Omega$ under affine transformations.

\begin{lemma}
    \label{res:Omega_trafo}
    For an invertible linear transformation $S\in\GL(\RR^d)$ and $t\in \RR^d$ holds
    $$\Omega(SP+t;Sx+t)=|\!\det(S)^{-1}|\, \Omega(P;x).$$
    %
\end{lemma}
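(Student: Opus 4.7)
My plan is to reduce the statement to the defining geometric identity $\Omega(P;x) = d!\vol(P-x)^\circ$ on the interior, and then invoke rationality to extend to all of $\RR^d$.

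First, I would fix $x \in \Int(P)$, so that $Sx + t \in \Int(SP + t)$ and both sides are given by honest dual-volume expressions. The right-hand side reads
\begin{equation*}
\Omega(SP+t; Sx+t) = d!\,\vol\bigl((SP+t) - (Sx+t)\bigr)^\circ = d!\,\vol\bigl(S(P-x)\bigr)^\circ.
\end{equation*}
The key algebraic ingredient is the standard transformation rule for polar duals under an invertible linear map: $(SK)^\circ = S^{-\Tsymb}K^\circ$ for any convex body $K$ containing the origin in its interior. Applying this with $K = P - x$ gives
\begin{equation*}
\vol\bigl(S(P-x)\bigr)^\circ = \vol\bigl(S^{-\Tsymb}(P-x)^\circ\bigr) = |\!\det(S^{-\Tsymb})|\,\vol(P-x)^\circ = |\!\det(S)^{-1}|\,\vol(P-x)^\circ,
\end{equation*}
using the usual volume-scaling formula $\vol(AL) = |\!\det A|\,\vol(L)$ and the identity $\det(S^{-\Tsymb}) = \det(S)^{-1}$.

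Multiplying by $d!$ yields the claimed equality on $\Int(P)$. Since $\Omega(P;x)$ is by construction a rational function, both sides of the claimed identity are rational functions of $x$, and two rational functions that coincide on the open set $\Int(P)$ coincide as rational functions; this extends the identity to all $x$. The degenerate case $\Int(P) = \eset$ is handled separately: then $\Int(SP+t) = \eset$ as well, so both sides vanish by the convention $\Omega \equiv 0$ on lower-dimensional polytopes. I do not foresee a genuine obstacle here; the only point requiring mild care is the bookkeeping of $S^{-\Tsymb}$ versus $S^{-1}$ in the polar-dual transformation rule, which is what produces exactly the factor $|\!\det S|^{-1}$ rather than $|\!\det S|$.
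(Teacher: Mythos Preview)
Your proof is correct and follows essentially the same route as the paper: unwind the definition via dual volume, cancel the translation, apply $(SK)^\circ = S^{-\Tsymb}K^\circ$, and read off the determinant factor. You are in fact slightly more careful than the paper in explicitly invoking rationality to pass from $\Int(P)$ to all of $\RR^d$ and in treating the empty-interior case separately.
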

\begin{proof}
    This follows from a straightforward computation:
    \begin{align*}
        \Omega(SP+t;Sx+t) 
        &= d! \vol((SP+t)-(Sx+t))^\circ
        \\&= d! \vol(SP-Sx)^\circ
        \\&= d! \vol(S(P-x))^\circ
        \\&= d! \vol(S\mT(P-x)^\circ)
        \\&= d!\, |\!\det(S\mT)| \vol(P-x)^\circ
        \\&= |\!\det(S)^{-1}|\, \Omega(P;x).
    \end{align*}
    We used the well-known fact that $(SP)^\circ=S\mT P^\circ$.
\end{proof}

In order to use $\Omega$ for matters of scissors congruence, we would need to identify certain special values or characteristics of $\Omega$ that are preserved under composition.
As mentioned before, $\Omega$ does not vanish on full-dimensional convex polytopes.
Thus, even though $\Omega(P)=0$ is preserved under composition, this yields no interesting~insights at this point.
In the next section we shall explore how instead the degree of the adjoint polynomial can serve as a useful characteristic.

\section{Adjoint degrees and degree drop}
\label{sec:adjoint_degrees}
\label{sec:degree_drops}

From here on, let $P\subset\RR^d$ be a convex $d$-dimensional polytope with $m$ facets.\nls 
It is known that ``generically'' (or projectively) the degree of the adjoint of a polytope is \textit{precisely} $m-d-1$ (see 
\cite[Theorem 1]{warren1996barycentric} or also \cite{kohn2020projective}).
This holds, for example,\nls if~$P$ is a simplex (\cf\ \cref{ex:simplices}).
For general polytopes this \Def{expected degree} is still an upper bound on $\deg(\adj_P)$, the actual degree might however be much smaller. 

\begin{example}\label{ex:cube}
    If $\square_d:=[-1,1]^d$ is the $d$-dimensional cube, then one can show that
    $$
    \Omega(\square_d;x) = 
        \frac{\vol(\square_d)}{\prod_i (1-x_i)^2}.
    $$
    The expected adjoint degree is $2d-d-1=d-1$, whereas the actual degree is $0$.
\end{example}

This deficiency in degree turns out to be an interesting and well-behaved invariant that we shall call the \Def{adjoint degree drop} of $P$, or just \Def{drop} for short. We~write
$$\drop(P):= \overbrace{(m-d-1)}^{\mathclap{\text{expected degree}}} - \overbrace{\deg(\adj_P)}^{\mathclap{\text{actual degree}}} \ge 0.$$
%
%
With the degree of a rational function defined as $\deg(p/q):=\deg(p)-\deg(q)$, one can express the drop also as
\begin{equation}
    \label{eq:drop_Omega}
    \drop(P) = -d-1-\deg(\Omega(P)).    
\end{equation}
Note that the drop, like the adjoint polynomial itself, is defined only for polytopes with non-empty interior. We shall restrict to the affine span if necessary.

The following is an immediate consequence of the valuative nature of the canonical form and motivates the study of the degree drop as a tool for investigating~polytope decompositions.

\begin{lemma}
    \label{res:drop_tiling}
    $\drop(P_1\cupdot\cdots\cupdot P_n) \ge \min_i \drop(P_i)$.
\end{lemma}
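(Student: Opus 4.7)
The plan is to combine the valuative nature of the canonical form (Theorem 2.2) with the elementary fact that the degree of a sum of rational functions is at most the maximum of the degrees of the summands, and then to translate back from $\Omega$ to the drop via identity \eqref{eq:drop_Omega}.

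First I would write, using Theorem 2.2,
$$\Omega(P_1 \cupdot \cdots \cupdot P_n) \;=\; \Omega(P_1) + \cdots + \Omega(P_n)$$
as an identity of rational functions in $\RR(x)$. Since $\deg(f+g) \le \max\{\deg f,\deg g\}$ for any rational functions $f,g$ (with the usual convention $\deg(f/g)=\deg f - \deg g$), iterating gives
$$\deg\bigl(\Omega(P_1 \cupdot \cdots \cupdot P_n)\bigr) \;\le\; \max_{i} \deg\bigl(\Omega(P_i)\bigr).$$

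Next I would invoke \eqref{eq:drop_Omega} on both sides, which turns the $\max$ into a $\min$ thanks to the minus sign:
$$\drop(P_1 \cupdot \cdots \cupdot P_n) \;=\; -d-1 - \deg\bigl(\Omega(P_1\cupdot\cdots\cupdot P_n)\bigr) \;\ge\; -d-1 - \max_i \deg\bigl(\Omega(P_i)\bigr) \;=\; \min_i \drop(P_i).$$

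There is no real obstacle, but one small bookkeeping point deserves attention: the drop is only defined when the ambient polytope and each $P_i$ are full-dimensional, so it should be tacitly assumed that every piece has non-empty interior (pieces with $\Int(P_i)=\varnothing$ contribute $\Omega(P_i)=0$ and can be discarded from the decomposition without affecting either side). I would mention this point briefly for the reader's comfort, but otherwise the two lines above constitute the entire proof.
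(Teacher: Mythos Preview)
Your proof is correct and follows essentially the same approach as the paper: apply the valuation identity $\Omega(P_1\cupdot\cdots\cupdot P_n)=\sum_i\Omega(P_i)$, use $\deg(\sum_i f_i)\le\max_i\deg(f_i)$, and convert via \eqref{eq:drop_Omega}. Your remark about lower-dimensional pieces is a reasonable clarification but not essential to the argument.
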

\begin{proof}
    Using \cref{res:Omega_valuation}, we have
    $$\deg\Omega(P_1\cupdot\cdots\cupdot P_n) \overset{\smash{\text{\ref{res:Omega_valuation}}}}= \deg \Big(\sum_i \Omega(P_i)\Big) \le \max_i \deg\Omega(P_i),$$
    where the inequality is the usual behavior of rational degrees under addition.
    The claim then follows by substituting \eqref{eq:drop_Omega}.
\end{proof}
%

This is already conceptionally similar to \cref{ex:cs}: like central symmetry,\nls a lower bound on the drop, such as $\drop(P)\ge s$, is a property that a composed~polytope inherits from its pieces.
To fully appreciate or make use of this fact one needs to understand what geometric and combinatorial properties of $P$ are encoded~by~the degree drop.
Thus, we consider the following as the central question in the study~of adjoint degrees:

\begin{question}
    \label{q:drop}
    What geometric/combinatorial properties characterize polytopes with a particular adjoint degree drop?
\end{question}

\begin{example}\label{ex:cube_drop}
    According to \cref{ex:cube} the $d$-dimensional cube has $\drop(\square_d)=d-1$.
    In fact, this holds for every parallelepiped (see \cref{res:drop_properties} \ref{it:trafo} below).
    By \cref{res:drop_tiling} every polytope composed from parallelepipeds has drop at least $d-1$. 
    This includes all zonotopes. 
\end{example}

    

We collect some elementary properties of the adjoint degree drop.

\begin{theorem}
\label{res:drop_properties}
Let $P,P_1,...,P_n$ be polytopes of dimensions $d,d_1,...,d_n\ge 1$.
Then
\begin{myenumerate}
    \setlength{\itemsep}{0.5ex}
    \item \label{it:product}
    $\drop(P_1\times\cdots\times P_n) = \sum_i \drop(P_i) + n - 1$.
    \item \label{it:face}
    if $F\subset P$ is a facet, then 
    $$\drop(F)\ge \drop(P)-1.$$
    If equality hold, then $P$ has a facet $G\not= F$ parallel to $F$. 
    Moreover, if $f\subset P$ is a face of dimension $d-k$, then $\drop(f)\ge \drop(P)-k$.
    \item \label{it:max_drop}
    $\drop(P)\le d-1$.
    \item \label{it:invariant}\label{it:trafo}
    the degree drop is affinely invariant, that is, if $S$ is an invertible linear~transformation and $t\in\RR^d$, then $$\drop(SP+t)=\drop(P).$$
    \item \label{it:projection}
    if $\pi$ is the projection onto a $(d-k)$-dimensional subspace, then $$\drop(\pi P)\ge \drop(P)-k.$$
    \item \label{it:Minkowski} 
    $\drop(P_1+\cdots+ P_n) \ge (d-1) - \sum_i (d_i-1) + \sum_i \drop(P_i)$, where $+$ denotes the Minkowski sum of polytopes.
    \item \label{it:cs}
    if $P$ is centrally symmetric then 
    $$\drop(P) \text{ is }\begin{cases}
        \text{even} & \text{if $d$ is odd,} \\
        \text{odd} & \text{if $d$ is even.}
    \end{cases}$$
\end{myenumerate}
\end{theorem}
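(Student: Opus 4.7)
My plan is to address the seven items sequentially, using throughout the representation $\Omega(P)=\adj_P/\prod_F L_F$, the identities $\drop(P)=m-d-1-\deg(\adj_P)=-d-1-\deg\Omega(P)$, the valuation and residue structure from \cref{res:Omega_valuation} and \eqref{eq:Omega_on_face}, and the transformation rule \cref{res:Omega_trafo}. For (i) I first establish the product formula $\Omega(P_1\times\cdots\times P_n;(x_1,\ldots,x_n))=\prod_i\Omega(P_i;x_i)$, which follows from the elementary identity $\vol_{d_1+d_2}((A\times B)^\circ)=\tfrac{d_1!\,d_2!}{(d_1+d_2)!}\vol_{d_1}(A^\circ)\vol_{d_2}(B^\circ)$ (proved via the change of variables $(f,g)\mapsto(s_A(f),s_B(g))$ reducing to a Beta integral) together with the $d!$-normalization in the definition of $\Omega$. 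This forces $\adj_{P_1\times\cdots\times P_n}=\prod_i\adj_{P_i}$ up to a nonzero constant, so summing actual and expected degrees gives the claimed identity. Part (iv) is immediate from \cref{res:Omega_trafo}, since invertible affine changes of variables preserve total degree of polynomials and leave $m$ and $d$ unchanged.

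For (ii), I apply \eqref{eq:Omega_on_face} and compare the two expressions $\Omega^{d-1}(F)=\adj_F/\prod_f L_f=\adj_P|_F/\prod_{G\neq F}L_G|_F$. Since each outward unit normal occurs on at most one facet, there is at most one facet $G\neq F$ parallel to $F$; its $L_G|_F$ is a nonzero constant on $\aff(F)$, while every other $L_G|_F$ remains a genuine affine form on $\aff(F)\simeq\RR^{d-1}$. Equating the degrees of the two fractions and using $\deg(\adj_P|_F)\le\deg(\adj_P)$ yields $\drop(F)=\drop(P)+\delta-p$, where $p\in\{0,1\}$ is the number of parallel facets and $\delta\ge 0$ is the degree loss of $\adj_P$ upon restriction to $\aff(F)$. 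This shows $\drop(F)\ge\drop(P)-1$, with equality forcing $p=1$; the codimension-$k$ version follows by iterating along a flag of faces. Part (iii) is then an immediate induction on $d$, since every segment has $\drop=0$ and $\drop(P)\le\drop(F)+1$ for any facet $F$.

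For (v), I reduce to codimension $k=1$ by iteration. In the one-step case, the key identity is the projection-intersection polarity $(\pi P-y)^\circ_V=(P-(y,0))^\circ\cap V$ with $V=\{\xi_d=0\}$, which realizes $\Omega(\pi P;y)$ as a codimension-one slice volume of the polar of $P$ at $(y,0)$; comparing the degree of this slice with $\deg\Omega(P)$ yields $\drop(\pi P)\ge\drop(P)-1$. I expect this slicing step to be the main technical obstacle; a backup is to realize $\pi P$ as a limit of affine images $\diag(1,\ldots,1,t)P$ as $t\to 0$, using (iv) to keep the drop constant for $t>0$ and carefully tracking the leading adjoint coefficient in the degenerate limit. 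Once (v) is available, (vi) follows immediately: embedding each $P_i$ in its own copy of $\RR^{d_i}$, the Minkowski sum $P_1+\cdots+P_n\subset\RR^d$ is the image of the Cartesian product $P_1\times\cdots\times P_n\subset\RR^{\sum_i d_i}$ under the linear summation map, a projection of codimension $\sum_i d_i-d$; combining (i) and (v) and simplifying reproduces exactly the stated inequality.

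Finally, for (vii), central symmetry $P=-P$ (after centering) gives $\Omega(P;x)=\Omega(P;-x)$; since each facet $F$ has an antipodal parallel facet $F'$ with $L_{F'}(x)=L_F(-x)$, the denominator $\prod_F L_F(x)$ is invariant under $x\mapsto-x$, forcing $\adj_P(x)=\adj_P(-x)$. Hence $\adj_P$ is an even polynomial and $\deg(\adj_P)$ is even; since a centrally symmetric polytope has an even number $m$ of facets (paired antipodally), the parity of $\drop(P)=m-d-1-\deg(\adj_P)$ is determined by $d+1$, yielding the claimed parity alternation.
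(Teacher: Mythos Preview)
Your arguments for (i), (ii), (iii), (iv), (vi), and (vii) are essentially the same as the paper's (the paper obtains (iii) in one step by applying the face inequality of (ii) directly to an edge rather than inducting through facets, but this is equivalent).

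The genuine gap is in (v). The polarity identity $(\pi P-y)^\circ_V=(P-(y,0))^\circ\cap V$ is correct, but the step ``comparing the degree of this slice with $\deg\Omega(P)$ yields $\drop(\pi P)\ge\drop(P)-1$'' is not justified: there is no a priori inequality between the degree in $y$ of the $(d-1)$-volume of a hyperplane section of $(P-(y,0))^\circ$ and the degree in $x$ of the $d$-volume of $(P-x)^\circ$. Your backup limit argument is likewise incomplete: as $t\to0$ the polytope $\diag(1,\dots,1,t)P$ degenerates to a set with empty interior in $\RR^d$, so $\Omega$ vanishes there and the adjoint is undefined; the passage from this to $\pi P$ viewed as a full-dimensional polytope in $\RR^{d-1}$ would require additional work you have not indicated.

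The paper's proof of (v) is more elementary and avoids both obstacles. For a codimension-one projection $\pi$ along a direction $c$, take the facets $F_1,\dots,F_\mu$ of $P$ whose outward normals satisfy $\langle u_{F_i},c\rangle>0$. Their projections tile the image: $\pi P=\pi F_1\cupdot\cdots\cupdot\pi F_\mu$. Since $\pi$ restricted to $\aff(F_i)$ is an invertible affine map, (iv) and (ii) give $\drop(\pi F_i)=\drop(F_i)\ge\drop(P)-1$. Then \cref{res:drop_tiling} yields $\drop(\pi P)\ge\min_i\drop(\pi F_i)\ge\drop(P)-1$, and the codimension-$k$ case follows by iteration. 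This reduces (v) entirely to (ii), (iv), and the tiling inequality you already have available.
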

\begin{proof}

    For \ref{it:product} set $P:=P_1\times\cdots\times P_n$.
    It is a standard exercise to show that~$\deg(\adj_P)$ $=\sum_i\deg(\adj_{P_i})$ (we included a proof in \cref{sec:appendix_adjoint_product}).
    We likewise have $d=\sum_i d_{P_i}$ and $m=\sum_i m_{P_i}$ (where $m$ and $m_{P_i}$ denote the number of facets of $P$ and $P_i$~res\-pectively). The claim follows via
    \begin{align*}
        \drop(P) &= m - d - 1 - \deg \adj_P
            \\[1ex] &= \sum_i m_{P_i} - \sum_i d_{P_i} - 1 - \sum_i \deg\adj_{P_i}
            \\ &= \sum_i (\underbrace{m_{P_i} -  d_{P_i} - 1 - \deg\adj_{P_i}}_{=\,\drop(P_i)}) + n - 1.
    \end{align*}
    
    For \ref{it:face} we use the residue formula \eqref{eq:Omega_on_face} to obtain
    \begin{align*}
    \deg \Omega^{d-1}(F) 
        &= \deg \left(\frac{\adj_P(x)|_F}{\prod_{G\not= F} L_G|_F}\right)
        = \deg(\adj_P(x)|_F) - \deg\Big(\prod_{G\not=F} L_G|_F\Big)
    \end{align*}
    The product on the right has $m-1$ factors.
    Each $L_G$ has degree one, but becomes constant on restriction to $F$ if and only if $G$ and $F$ are parallel.
    Only one facet can be parallel to $F$, so this introduces a case distinction:
    \begin{align*}
    \deg \Omega^{d-1}(F) 
        &= \deg(\adj_P(x)|_F) - \begin{cases}
            m-1 & \text{if $P$ has no facet parallel to $F$}
            \\[0.5ex]
            m-2 & \text{if $P$ has a facet parallel to $F$}
        \end{cases}
    \\  &\le \deg(\adj_P(x)) - m + 2
    \\  &= (m-d-1-\drop(P))- m + 2
        =- \drop(P)-d+1
    \end{align*}
    Comparing with $\deg \Omega^{d-1}(F) =-\drop(F)-(d-1)-1$ (from \eqref{eq:drop_Omega}) results in the claimed inequality.
    For equality the case distinction must evaluate to $m-2$, that is, there must be a parallel facet.
    The general statement with $f$ a face of dimension $d-k$ follows by induction.
    %

    For \ref{it:max_drop} let $e\subset P$ be an edge.
    Then \ref{it:face} gives $0=\drop(e)\ge \drop(P)-(d-1)$, which proves the claim.

    Item \ref{it:invariant} is an application of \cref{res:Omega_trafo}:
    \begin{align*}
        \drop(SP + t) 
            &= -d-1-\deg(\Omega(SP+t;x))
        \\  &\overset{\smash{\mathclap{(*)}}}= -d-1-\deg(\Omega(SP+t;Sx+t))
        \\  &\overset{\mathclap{\smash{\ref{res:Omega_trafo}}}}= -d-1-\deg(\Omega(P;x))
        \\  &= \drop(P),
    \end{align*}
    where in $(*)$ we used that substituting an invertible affine transformation into a rational function does not change the degree.

    For \ref{it:projection} suppose first that $\pi$ projects onto a $(d-1)$-dimensional subspace orthogonal to a vector $c\in\RR^d$.
    Let $F_1,...,F_\mu$ be the facets of $P$ whose outwards normals satisfies $\<u_{F_i},c\>>0$.    
    Observe that $\pi F_i$ is an \emph{invertible} affine transformation of $F_i$, hence $(*)\,\drop(\pi F_i)=\drop(F_i)\ge \drop(P)-1$ (where we used both \ref{it:trafo} and \ref{it:face}).
    Observe further that the $\pi F_1,...,\pi F_\mu$ form a tiling of $\pi P$.
    Thus
    $$
        \drop(\pi P) 
            = \drop(\pi F_1\cupdot\cdots\cupdot \pi F_\mu) 
            \overset{\mathclap{\smash{\ref{res:drop_tiling}}}}
                \ge \min_i \drop(\pi F_i) 
            \overset{\mathclap{\smash{(*)}}}
                \ge \drop(P)-1.
    $$
    The statement for general projections follows via induction on $k$.

    For \ref{it:Minkowski} recall that the Minkowski sum $P_1+\cdots+ P_n$ is a projection of $P_1\times \cdots \times P_n$. 
    Hence we obtain the statement from a combination of \ref{it:product} and \ref{it:projection}:
    \begin{align*}
        \drop(P_1+\cdots+ P_n) 
            &= \drop(\pi(P_1\times\cdots\times P_n))    
        \\  &\overset{\mathclap{\smash{\text{\ref{it:projection}}}}}%
                \ge \drop(P_1\times\cdots\times P_n) - \Big(\sum_i d_i - d\Big)
        \\  &\overset{\mathclap{\smash{\text{\ref{it:product}}}}}%
                = \sum_i \drop(P_i) + n - 1 - \Big(\sum_i d_i - d\Big).
    \end{align*}
    The last expression rearranges to the claimed statement.

    For \ref{it:cs} we may assume that $P$ is origin symmetric (os), that is, $P=-P$~(other\-wise translate, which does not change the drop by \ref{it:trafo}).
    Using \cref{res:Omega_trafo} with~$S=-\Id$ yields
    $$
    \Omega(P;-x) 
        \overset{\mathclap{\smash{\text{os}}}}
            =\Omega(-P;-x) 
        \overset{\mathclap{\smash{\ref{res:Omega_trafo}}}}
            =\Omega(P;x).
    $$
    Hence $\Omega(P;x)$ is an even function in $x$.
    That means, its numerator degree (which is $\deg\adj_P$) and its denominator degree (which is $m$, the number of facets) have the same parity.
    Therefore, $\drop(P)=m-d-1-\deg \adj_P(x)\equiv -d-1\pmod 2$, and the parity of $\drop(P)$ is as claimed.
\end{proof}

In general, characterizing polytopes of a particular drop seems hard.
Some statements can be made about the extreme ends of the spectrum.

\begin{lemma}
    \label{res:pyramid}
    If $P$ is a pyramid, then $\drop(P)=0$.
\end{lemma}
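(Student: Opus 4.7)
The plan is to induct on the dimension $d$, applying \cref{res:drop_properties}\,\itm2 at the inductive step to a \emph{lateral} facet of $P$ (one containing the apex of the pyramid). The base case $d=1$ is immediate: every $1$-polytope is a segment, and a segment is a $1$-simplex of drop $0$ by \cref{ex:simplices}.

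For the inductive step, write $P=\Pyr(B)$ as a $d$-pyramid with apex $v$ and base $B$, and pick any facet $F$ of $P$ that contains $v$. Such a lateral facet has the form $F=\Pyr(G)$ for some facet $G\subset B$, and is therefore itself a $(d-1)$-pyramid; the inductive hypothesis gives $\drop(F)=0$. The core of the argument is to verify that $P$ has \emph{no} facet parallel to $F$, so that \cref{res:drop_properties}\,\itm2 forces the strict inequality $\drop(F)>\drop(P)-1$. The base $B$ is not parallel to $F$, since $\aff(B)\cap\aff(F)=\aff(G)$ has dimension $d-2$ and hence these hyperplanes meet transversely. Any other lateral facet $F'=\Pyr(G')$ shares the apex $v$ with $F$, so $\aff(F)\cap\aff(F')\ni v$, and two distinct hyperplanes through a common point cannot be parallel.

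Combining $\drop(F)=0$ with the strict inequality yields $\drop(P)-1<0$, and since the drop is a non-negative integer, we conclude $\drop(P)=0$. There is no serious obstacle in this approach, but the choice of facet is essential: applying \cref{res:drop_properties}\,\itm2 at the base $B$ would not close the induction, because $B$ may be an arbitrary $(d-1)$-polytope whose drop is not known a priori. Lateral facets sidestep this issue precisely because each is again a pyramid of one dimension less, and the strong form of \cref{res:drop_properties}\,\itm2 (forcing a parallel facet) is perfectly suited to rule out the only way equality in the facet inequality could fail to be strict.
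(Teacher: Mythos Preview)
Your proof is correct and follows essentially the same approach as the paper's: induct on $d$, pick a lateral facet $F$ (which is again a pyramid, hence $\drop(F)=0$ by induction), and use the absence of a parallel facet together with \cref{res:drop_properties}\,\ref{it:face} to force $\drop(P)<1$. You have spelled out in more detail why no facet of $P$ is parallel to $F$, which the paper leaves implicit.
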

\begin{proof}
    This is true in dimension $d=1$.
    In dimension $d\ge 2$, $P$ has a facet $F$~that is itself a pyramid and therefore has $\drop(F)=0$ by induction hypothesis. 
    Hence $\drop(P)\le \drop(F)+1=1$ by \cref{res:drop_properties} \ref{it:face}.
    But since $P$ has no facet parallel to $F$, this inequality is strict.
\end{proof}


\begin{lemma}
    \label{res:crosspolytope}
    If $C_d$ is the $d$-dimensional crosspolytope, then
    $$\drop(C_d) = \begin{cases}
        0 & \text{if $d$ is odd,}
        \\
        1 & \text{if $d$ is even.}
    \end{cases}$$
\end{lemma}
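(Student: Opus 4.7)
The plan is to combine the facet bound from \cref{res:drop_properties}\,\ref{it:face} with the parity constraint for centrally symmetric polytopes in \cref{res:drop_properties}\,\ref{it:cs}, using only the easy observation that the facets of a crosspolytope are simplices. Remarkably, no direct computation of $\adj_{C_d}$ or of the canonical form is required.

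First I would identify the facets of $C_d=\conv\{\pm e_1,\ldots,\pm e_d\}$: each facet is determined by a sign vector $\sigma\in\{\pm 1\}^d$ as the convex hull of $\sigma_1 e_1,\ldots,\sigma_d e_d$. These are $d$ affinely independent points, so every facet $F$ of $C_d$ is a $(d-1)$-simplex. By \cref{ex:simplices} the adjoint of a simplex is a (nonzero) constant, hence $\deg\adj_F=0$ and so $\drop(F)=0$. Applying \cref{res:drop_properties}\,\ref{it:face} immediately yields $\drop(C_d)\le\drop(F)+1=1$.

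Next I would invoke central symmetry: $C_d=-C_d$, so \cref{res:drop_properties}\,\ref{it:cs} forces $\drop(C_d)$ to be even when $d$ is odd and odd when $d$ is even. Combined with the bound $0\le\drop(C_d)\le 1$, this pins down $\drop(C_d)=0$ for $d$ odd and $\drop(C_d)=1$ for $d$ even, which is exactly the claim.

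I do not expect any real obstacle: both ingredients have already been proved earlier in the paper, and the only geometric input is recognizing that each facet of the crosspolytope is a simplex (and hence minimally undershoots its own expected adjoint degree). The parity constraint is just sharp enough to upgrade the facet inequality $\drop(C_d)\le 1$ into an equality in each parity of $d$, so no further case analysis or explicit computation is needed.
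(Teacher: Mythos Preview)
Your proof is correct and follows essentially the same route as the paper: bound $\drop(C_d)\le 1$ via \cref{res:drop_properties}\,\ref{it:face} applied to a simplex facet (which has drop zero), then use the parity constraint from \cref{res:drop_properties}\,\ref{it:cs} for centrally symmetric polytopes to pin down the exact value.
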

\begin{proof}
    If $F$ is a facet of $C_d$, then $F$ is a simplex and has drop zero.
    By \cref{res:drop_properties} \ref{it:face} it follows $\drop(C_d)\le \drop(F)+1=1$.
    Hence $\drop(C_d)\in\{0,1\}$, and since $C_d$ is centrally symmetric the actual drop follows from \cref{res:drop_properties} \ref{it:cs}.
\end{proof}

Of course, much stronger statements can be made if required.
For example,\nls the proof of \cref{res:crosspolytope} applies verbatim to any centrally symmetric simplicial polytope, or more generally, to any centrally symmetric polytope with a single facet of drop zero.

On the other end of the spectrum, in \cref{ex:cube_drop} we already mentioned that zonotopes have a drop of at least $d-1$.
Using \cref{res:drop_properties} \ref{it:max_drop} we now see that~$d-1$ is the maximal possible drop and zonotopes attain it.
\cref{res:drop_properties} actually gives us two more ways to see this:

\begin{corollary}
    \label{res:zonotopes_max_drop}
    If $P$ is a zonotope, then $\drop(P)=d-1$.
\end{corollary}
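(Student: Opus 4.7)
The plan is to sandwich $\drop(P)$ between the easy upper bound from \cref{res:drop_properties}~\ref{it:max_drop} and a matching lower bound coming from the Minkowski-sum inequality of \cref{res:drop_properties}~\ref{it:Minkowski}. By definition, a zonotope $P\subset\RR^d$ is a Minkowski sum $P = S_1 + \cdots + S_n$ of one-dimensional line segments $S_i$. Each such segment has $d_i = 1$ and $\drop(S_i) = 0$: a segment has $m = 2$ facets, so the expected adjoint degree is $m - d_i - 1 = 0$, and this is realized by the (nonzero constant) adjoint of a one-dimensional polytope.

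Substituting $d_i = 1$ and $\drop(S_i) = 0$ into the inequality of item~\ref{it:Minkowski} gives
\[
\drop(P) \ \ge\ (d-1) - \sum_{i=1}^{n}(d_i - 1) + \sum_{i=1}^{n} \drop(S_i) \ =\ d - 1.
\]
Combining this with the upper bound $\drop(P) \le d - 1$ from item~\ref{it:max_drop} forces $\drop(P) = d - 1$.

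There is essentially no obstacle here; the content is entirely packaged inside \cref{res:drop_properties}. An alternative route would be to invoke the classical fact that every zonotope admits a (fine) tiling by parallelepipeds, then combine the parallelepiped computation $\drop = d-1$ (cf.\ \cref{ex:cube_drop}) with the tiling estimate of \cref{res:drop_tiling}. The Minkowski-sum argument above is preferable because it sidesteps any structural input from the theory of zonotopal tilings and reduces the corollary to a one-line numerical substitution.
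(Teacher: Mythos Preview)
Your proof is correct and matches the paper's own argument almost verbatim: the paper also invokes \cref{res:drop_properties}~\ref{it:Minkowski} with all $P_i$ line segments to get the lower bound $d-1$, and then combines with~\ref{it:max_drop}. The paper additionally notes the projection-of-a-cube route via~\ref{it:projection}, while you instead mention the parallelepiped-tiling route, but these are side remarks and the core argument is the same.
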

\begin{proof}
    Since zonotopes are projections of cubes, the lower bound $d-1$ follows~from \cref{res:drop_properties} \ref{it:projection}.
    Alternatively, since zonotopes are Minkowski sums of line segments, the lower bound also follows from \cref{res:drop_properties} \ref{it:Minkowski} by letting all $P_i$ be line segments:
        $$\drop(P) \ge (d-1)-\sum_i \underbrace{(d_i-1)}_{=0} + \sum_i \underbrace{\drop(P_i)}_{=0} = d-1.$$
    In each case we obtain a lower bound that matches the maximal possible drop from \cref{res:drop_properties} \ref{it:max_drop}.
    We therefore have equality.
\end{proof}

Whether zonotopes are the only polytopes with maximal $\drop(P)=d-1$ we~cannot answer with the tools available at this point.
The answer is provided in \cref{res:zonotopes_iff_drop_d_1}.
For later use we collect further properties of maximal drop polytopes:

\begin{lemma}
    \label{res:max_drop_properties}
    Let $P$ be a polytope of maximal drop $d-1$.
    \begin{myenumerate}
        \item if $f$ is a face of $P$, then it also has maximal drop in its respective dimension.
        \item if $F$ is a facet, then $P$ has a facet $G$ parallel to $F$.
    \end{myenumerate}
\end{lemma}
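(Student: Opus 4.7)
Both parts follow quickly by combining the inequalities already established in \cref{res:drop_properties}, and the hard work is really contained there; the plan is simply to squeeze the drop between a lower and an upper bound.

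For \itm{1}, let $f\subset P$ be a face of dimension $d-k$, where $1\le k\le d-1$ (the case $f=P$ being trivial). By \cref{res:drop_properties}\,\ref{it:face} applied iteratively (the face statement for codimension $k$), we have
$$\drop(f)\ge \drop(P)-k = (d-1)-k = (d-k)-1.$$
On the other hand, \cref{res:drop_properties}\,\ref{it:max_drop} applied to $f$ (viewed as a full-dimensional polytope in its affine span $\aff(f)\simeq\RR^{d-k}$) gives $\drop(f)\le (d-k)-1$. The two inequalities force equality, so $f$ has maximal drop in its dimension.

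For \itm{2}, let $F$ be a facet of $P$, so $\dim F=d-1$. By part \itm{1} just proved, $F$ has maximal drop, namely $\drop(F)=d-2$. But then
$$\drop(F)=d-2=(d-1)-1=\drop(P)-1,$$
so equality holds in the facet bound of \cref{res:drop_properties}\,\ref{it:face}. The equality clause of that item then yields a facet $G\neq F$ of $P$ parallel to $F$, as claimed.

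The only potential obstacle is making sure the face inequality in \cref{res:drop_properties}\,\ref{it:face} is genuinely available in the iterated form used in \itm{1}; this is precisely the ``moreover'' clause of that item, so the argument is ready as written.
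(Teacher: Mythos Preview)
Your proof is correct and follows essentially the same approach as the paper's own proof: combine the lower bound from \cref{res:drop_properties}\,\ref{it:face} with the upper bound from \cref{res:drop_properties}\,\ref{it:max_drop} for part \itm{1}, and then invoke the equality clause of \cref{res:drop_properties}\,\ref{it:face} for part \itm{2}. The paper's version is more terse (it leaves the upper bound implicit by saying ``which is maximal''), but the logic is identical.
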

\begin{proof}
    If $f$ is a face of dimension $\delta$, then by \cref{res:drop_properties} \ref{it:face} $\drop(f)\ge \drop(P)-(d-\delta)=\delta-1$, which is maximal for a polytope of this dimension. 
    If $F$ is a facet, since we attain the bound in \cref{res:drop_properties} \ref{it:face}, $P$ must have a parallel facet.
\end{proof}













\section{The reduced canonical form $\Omega_0$ and translation-invariance}
\label{sec:Omega0}

A major disadvantage of $\Omega$ that makes it unsuited for questions of scissors~congruence is that it is not translation-invariant.
In this section we introduce a translation-invariant modification.
In a first step we define the \textit{homogenized canonical form},\nls and subsequently derive from it the \textit{reduced canonical form} $\Omega_0$.




\iffalse 

\subsection{The homogenized canonical form}

We start from the \Def{homogenized canonical form 
$\Omega(P;x_0,x)$} $:=\Omega(P^{\hom};(x_0,x))$, where
$$P^{\mathrm{hom}}:=\cone\{(1,x)\mid x\in P\}\subset \RR^{d+1},$$ 
denotes the homogenization of $P$.
Then
\begin{align*}
    \Omega(P;x_0,x) &= \Omega(P^{\hom};(x_0,x)) 
    \\&= x_0^{-d-1}\Omega(P^{\hom};(1,x/x_0))
    = x_0^{-d-1}\Omega(P;x/x_0),
\end{align*}
where we used that $\Omega$ on $(d+1)$-dimensional cones is homogeneous in $x$ of degree $-d-1$, and that de-homogenization works as expected.
Both facts are easy to verify for simplices (\cf\ \cref{ex:simplices}), and subsequently generalize to all polytopes using triangulation and the valuation property \eqref{eq:valuation}.
Substituting \eqref{eq:Omega_represenation} we arrive~at
\begin{align*}
\Omega(P;x_0,x) 
    = x_0^{-d-1}\frac{\adj_P(x/x_0)}{\prod_{F\subset P} L_F(x/x_0)}  
    &= \frac{x_0^{m-d-1} \adj(x/x_0)}{x_0^m \prod_F(h_F-\<x/x_0,u_F\>)}.
\end{align*}
We see that the homogenized canonical form can alternatively be obtained by homo\-genizing both numerator and denominator in \eqref{eq:Omega_represenation} to their ``expected degrees'',\nls that is, the numerator (which is the adjoint) to degree $m-d-1$, and the denominator to degree $m$.
If $\adj_P(x_0,x)$ is this \Def{homogenized adjoint}, then we can write
\begin{align*}
\Omega(P;x_0,x) 
    = \frac{\adj_P(x_0,x)}{\prod_F (h_Fx_0-\<u_F,x\>)}.
\end{align*}

\begin{lemma}
    $\Omega(P^{\hom};(x_0,x)) = x^{-d-1}_0\Omega(P;x/x_0).$
\end{lemma}
\begin{proof}
    We triangulate $P$. By the valuation property it suffices to check the identity on a simplex. \TODO
\end{proof}

\iffalse 

{\color{lightgray}

\hrulefill

As a first step we introduce the \Def{homogenized canonical form} $\Omega(P;x_0,x)$.
There are two equivalent ways to do so:
\begin{myenumerate}
    \setlength{\itemsep}{1ex}
    
    \item as the canonical form applied to the homogenized polytope
    $$P^{\mathrm{hom}}:=\cone\{(1,x)\mid x\in P\}\subset\RR\times \RR^d.$$ 
    Then $\Omega(P;x_0,x):=\Omega(P^{\mathrm{hom}};(x_0,x))$.
    
    \item by homogenizing the numerator and denominator to their expected degrees. More precisely, we homogenize the numerator (the adjoint) to degree $m-d-1$, and the denominator to degree $m$. If we write $\adj_P(x_0,x):=x_0^{m-d-1} \adj_P(x/x_0)$ for the \Def{homogenized adjoint}, then we obtain
    \begin{align*}
    \quad\qquad\Omega(P;x_0,x) 
        &= \frac{\adj_P(x_0,x)}{\prod_F (h_Fx_0-\<u_F,x\>)} 
        \\&= \frac{x_0^{m-d-1} \adj(x/x_0)}{x_0^m \prod_F(h_F-\<u_F,x/x_0\>)} = x_0^{-d-1}\, \Omega(P;x/x_0).    
    \end{align*}
    
\end{myenumerate}

We quickly verify that these definitions are equivalent:

\begin{lemma}
    $\Omega(P^{\mathrm{hom}};(x_0,x))= x_0^{-d-1}\Omega(P;x/x_0)$.
\end{lemma}
\begin{proof}
    Since both sides valuative, it suffices to triangulate $P$ and verify the claim on simplices.
    If a simplex $\Delta$ has normal vectors $u_0,...,u_d$ and heights $h_0,...,h_d$, then its homogenization has normal vectors $v_i:=(h_i,u_i)$.
    Given \cref{ex:simplices}, clearly $\Omega(\Delta^{\mathrm{hom}})$ is homogeneous of degree $-d-1$ in $(x_0,x)$:
    $\<(1,x_i),(-h_i,u_i)\> = -h_i + \<x,u_i\>$.

    $$\Omega(\Delta^{\hom},(x_0,x))=x_0^{-d-1}\Omega(\Delta^{\hom},(1,x/x0))=x_0^{-d-1}\Omega(\Delta;x/x_0).$$
\end{proof}

\hrulefill

}

\fi

\else 


\subsection{The homogenized canonical form}
\label{sec:homogenized_Omega}

There are two equivalent ways to define the \Def{homogenized canonical form $\Omega(P;x_0,x)$} for a polytope -- geometrically or more algebraically.
We shall mainly work with the latter.

We construct $\Omega(P;x_0,x)$ by homogenizing both the numerator and denominator of $\Omega$ to their respective ``expected degrees''.
More precisely, we homogenize the~numerator (which is the adjoint) to degree $m-d-1$, and the denominator to degree $m$ (recall that $m$ is the number of facets).
Formally, this yields
\begin{align*}
\Omega(P;x_0,x) 
&:= \frac{x_0^{m-d-1} \adj(x/x_0)}{x_0^m \prod_F L_F(x/x_0)} = x_0^{-d-1}\, \Omega(P;x/x_0).    
\end{align*}
%
%
%
%
If $\adj_P(x_0,x):=x_0^{m-d-1} \adj_P(x/x_0)$ denotes the \Def{homogenized adjoint}, then
\begin{equation}
\label{eq:Omegahom}
\Omega(P;x_0,x) 
= \frac{\adj_P(x_0,x)}{\prod_F (h_Fx_0-\<u_F,x\>)}   
\end{equation}
Observe that $\drop(P)$ is the highest power of $x_0$ that divides $\adj_P(x_0,x)$.
%
%

    Geometrically, this construction agrees with applying the canonical form to the homogenization of the polytope $P$: 
    \begin{equation}
        \label{eq:Omegahom_geom}\Omega(P;x_0,x)=\Omega(P^{\hom};(x_0,x)),\quad\text{where } P^{\mathrm{hom}}:=\cone(\{1\}\times P)\subset \RR^{d+1}.
    \end{equation}
    %
The equivalence can be shown by triangulating $P$ and proving the statement on~simplices (\cf\ \cref{ex:simplices}). For details, see \cref{res:homogenized_Omega} in the appendix.

\fi

\subsection{The reduced canonical form}
\label{sec:reduced_Omega}

The \Def{reduced canonical form $\Omega_0$} is obtained from $\Omega(P;x_0,x)$ by setting $x_0=0$:
\begin{equation}
    \label{eq:Omega0_def}
    \Omega_0(P;x) := \Omega(P;x_0,x)|_{x_0=0}=(-1)^m\frac{\adj_P(x_0,x)|_{x_0=0}}{\prod_F \<x,u_F\>}.
\end{equation}
%
%
In other words,
$\Omega_0(P)$ is obtained from $\Omega(P)$ by restricting both numerator and~denominator to the monomials of the expected maximal degree.
In the numerator~this means restricting to monomials of degree $m-d-1$, and in the denominator to~monomials of degree $m$. 

The geometric interpretation of this construction is that $\Omega_0(P)$ is the restriction of the homogeneous rational function $\Omega(P^{\hom};(x_0,x))$ to the ``hyperplane at~infinity'' $H^\infty=\{x_0=0\}$.
%
This restriction clearly preserves the property of being~a~simple valuation, and so $\Omega_0$ too is a simple valuation on convex polytopes.\nls
Its~cru\-cial new property is translation-invariance:

\begin{lemma}
    \label{res:translation_invariant}
    $\Omega_0$ is translation-invariant, that is 
    $$\Omega_0(P+t;x)=\Omega_0(P;x),\quad\text{for all $t\in\RR^d$}.$$
\end{lemma}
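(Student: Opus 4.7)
The plan is to reduce the translation-invariance of $\Omega_0$ to the elementary shift covariance of $\Omega$, combined with the homogeneity built into the construction of the homogenized canonical form.

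First I would record the identity
$$\Omega(P+t;x)=\Omega(P;x-t),\qquad t\in\RR^d,$$
which follows immediately from $\Omega(P;x)=d!\vol(P-x)^\circ$ via $(P+t)-x=P-(x-t)$, or equivalently from \cref{res:Omega_trafo} applied with $S=\Id$. This is an identity of rational functions because both sides are rational extensions of the same function on the open set $\Int(P+t)=\Int(P)+t$.

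Next I would substitute this into the defining formula $\Omega(P;x_0,x)=x_0^{-d-1}\Omega(P;x/x_0)$. A direct computation gives
$$\Omega(P+t;x_0,x)=x_0^{-d-1}\Omega(P;x/x_0-t)=x_0^{-d-1}\Omega(P;(x-tx_0)/x_0)=\Omega(P;x_0,x-tx_0),$$
so translating $P$ by $t$ corresponds to the linear substitution $x\mapsto x-tx_0$ in the homogenized canonical form. Specializing $x_0=0$ then annihilates the translation term:
$$\Omega_0(P+t;x)=\Omega(P+t;x_0,x)\big|_{x_0=0}=\Omega(P;x_0,x-tx_0)\big|_{x_0=0}=\Omega_0(P;x),$$
which is the desired invariance.

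There is no real obstacle here: the claim is essentially a bookkeeping consequence of how $\Omega(P;x_0,x)$ was set up, with the extra variable $x_0$ tracking exactly the affine deviation responsible for breaking translation-invariance of the un-homogenized canonical form; forcing $x_0=0$ removes that deviation. The only point to verify carefully is that the shift identity holds at the level of rational functions rather than merely on $\Int(P)$, but this is automatic by uniqueness of rational extensions.
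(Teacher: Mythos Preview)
Your argument is correct and follows essentially the same route as the paper: both start from $\Omega(P+t;x)=\Omega(P;x-t)$ (via \cref{res:Omega_trafo}) and observe that this shift vanishes when one restricts to the expected top-degree part. The paper phrases the last step as ``substituting $x\mapsto x-t$ does not change the monomials of maximal degree,'' whereas you make it explicit by rewriting the shift as $x\mapsto x-tx_0$ in the homogenized form and then setting $x_0=0$; these are two wordings of the same observation.
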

\begin{proof}

    By \cref{res:Omega_trafo} we have $\Omega(P+t,x)=\Omega(P;x-t)$.
    Observe that substituting $x\mapsto x-t$ into a polynomial does not change its monomials of maximal degree.
    Since $\Omega_0$ only retains the expected top degree terms of the numerator and denominator~of $\Omega$, it is therefore unaffected by this modification.
    %
    %
    %
\end{proof}

{\color{black}
    
\begin{remark}
In contrast to $\Omega$, the reduced canonical form cannot be extended,\nls as a valuation, to general \emph{unbounded} polyhedra. 
The algebraic construction of the~homogenized canonical form $\Omega(P;x_0,x)$ fails because for unbounded polyhedra the adjoint degree can be higher than $m-d-1$ and so cannot be homogenizing to degree $m-d-1$ (\eg\ simplicial cones have adjoint degree $m-d$, see \cref{ex:simplices}).
The geometric construction of $\Omega(P;x_0,x)$ does work, but yields an expression with a non-zero power of $x_0$ in the denominator.
This prevents us from setting $x_0=0$, and so $\Omega_0$ cannot be constructed.
\end{remark}

}


\subsection{Applying $\boldsymbol{\Omega_0}$}
\label{sec:applying_Omega0}

If the adjoint $\adj_P$ is not of the expected degree $m-d-1$, \nls then restricting to the monomial of this degree yields zero. Hence

\begin{observation}
    \label{res:Omgea0_0_iff_drop_0}
    $\Omega_0(P)=0$ if and only if $\drop(P)>0$.
\end{observation}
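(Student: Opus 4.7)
The plan is to unpack the algebraic definition of $\Omega_0$ and $\drop$ and observe that the equivalence is essentially a tautology once the homogenization is understood. The key point is that $\Omega_0(P;x)$ is, by construction, the value at $x_0=0$ of the homogenized numerator $\adj_P(x_0,x)$ divided by $\prod_F \langle x, u_F\rangle$, and the denominator is a nonzero polynomial in $x$ (since the facet normals $u_F$ span $\RR^d$). Hence $\Omega_0(P)=0$ if and only if $\adj_P(x_0,x)|_{x_0=0}\equiv 0$.

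First I would decompose the ordinary adjoint into its homogeneous components, writing $\adj_P(x) = \sum_{k=0}^{\deg\adj_P} p_k(x)$ with $p_k$ homogeneous of degree $k$. Setting $D := \deg\adj_P$, the homogenization $\adj_P(x_0,x) = x_0^{m-d-1}\adj_P(x/x_0)$ becomes
\[
\adj_P(x_0,x) = \sum_{k=0}^{D} x_0^{\,m-d-1-k}\, p_k(x).
\]
Now $\drop(P) = (m-d-1) - D$, so every exponent $m-d-1-k$ satisfies $m-d-1-k \ge \drop(P)$, with equality exactly when $k=D$. In particular, if $\drop(P) > 0$, every summand carries a positive power of $x_0$, so $\adj_P(x_0,x)|_{x_0=0} = 0$ and therefore $\Omega_0(P) = 0$. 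Conversely, if $\drop(P) = 0$, then the $k=D$ summand contributes $p_D(x)$ with no factor of $x_0$, and $p_D \not\equiv 0$ by definition of the degree, so $\adj_P(x_0,x)|_{x_0=0} = p_D(x) \neq 0$ and $\Omega_0(P)\neq 0$ (using again that the denominator is a nonzero polynomial in $x$).

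There is no real obstacle here; the statement is an immediate consequence of the definitions given in \cref{sec:homogenized_Omega} and \cref{sec:reduced_Omega}. The only subtlety is being careful that the denominator $\prod_F \langle x,u_F\rangle$ does not vanish identically, which is clear because $P$ is full-dimensional and hence the $u_F$ are not all contained in a proper linear subspace, so no $\langle x,u_F\rangle$ is the zero polynomial and their product is nonzero as an element of $\RR[x]$.
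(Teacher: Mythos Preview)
Your argument is correct and matches the paper's own reasoning: the paper simply notes that restricting $\adj_P$ to the monomials of expected top degree $m-d-1$ yields zero exactly when the actual degree falls short, and states the observation without further proof. Your version spells out the homogeneous decomposition and the nonvanishing of the denominator a bit more carefully, but the approach is identical.
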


This gives us a first translation scissors invariant based on adjoint degrees:

\begin{corollary}
    \label{res:drop_0_invariant}
    $\drop(P)=0$ is a translation scissors invariant.
\end{corollary}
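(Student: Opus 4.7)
The plan is to mimic the central-symmetry argument sketched in Example 1.1, now using $\Omega_0$ in place of the odd surface area measure. The three ingredients are all available: $\Omega_0$ is a simple valuation, it is translation-invariant (\cref{res:translation_invariant}), and the condition $\drop(P) = 0$ is precisely the condition $\Omega_0(P) \neq 0$ (\cref{res:Omgea0_0_iff_drop_0}).

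Concretely, I would assume $P$ and $Q$ are translation scissors congruent, so that there exist decompositions $P = P_1 \cupdot \cdots \cupdot P_n$ and $Q = Q_1 \cupdot \cdots \cupdot Q_n$ with $P_i = Q_i + t_i$ for vectors $t_i \in \RR^d$. Then the simple valuation property of $\Omega_0$ (inherited from $\Omega$, since restricting numerator and denominator of a valuation to their top-degree components preserves additivity on tilings) gives
$$\Omega_0(P) = \sum_i \Omega_0(P_i) = \sum_i \Omega_0(Q_i + t_i).$$
Applying \cref{res:translation_invariant} term by term collapses the translations, and one more application of the valuation property reassembles $Q$:
$$\sum_i \Omega_0(Q_i + t_i) = \sum_i \Omega_0(Q_i) = \Omega_0(Q).$$
Thus $\Omega_0(P) = \Omega_0(Q)$.

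To conclude, I would invoke \cref{res:Omgea0_0_iff_drop_0}: the condition $\drop(P) = 0$ is equivalent to $\Omega_0(P) \neq 0$, so the equality $\Omega_0(P) = \Omega_0(Q)$ forces $\drop(P) = 0 \iff \drop(Q) = 0$. There is no real obstacle here; the work has already been done in setting up $\Omega_0$ as a translation-invariant simple valuation. The only subtlety worth flagging in the write-up is that one must know $\Omega_0$ is genuinely a (simple) valuation — this follows because $\Omega$ is a simple valuation and the operation ``restrict numerator and denominator to their expected top-degree monomials'' is $\RR$-linear on the relevant rational functions, so it commutes with taking sums in the valuation identity from \cref{res:Omega_valuation}.
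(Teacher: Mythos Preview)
Your proposal is correct and matches the paper's intended argument exactly: the corollary is stated without proof immediately after \cref{res:Omgea0_0_iff_drop_0}, and the implicit reasoning is precisely the one you spell out --- $\Omega_0$ is a translation-invariant simple valuation, so $\Omega_0(P)=\Omega_0(Q)$ for translation scissors congruent $P,Q$, and \cref{res:Omgea0_0_iff_drop_0} then gives $\drop(P)=0\iff\drop(Q)=0$. Your remark on why $\Omega_0$ inherits the valuation property is fine; the paper phrases it slightly differently (as restricting $\Omega(P^{\hom};(x_0,x))$ to the hyperplane $\{x_0=0\}$), but the content is the same.
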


Once again, to make use of this fact directly we would need to understand~what a drop of zero encodes geometrically or combinatorially (see \cref{q:drop}).
The~intuition is that drop zero is the ``generic case'', though a precise alternative formulation is not available in all cases (for $d\le3$ an answer was obtained, see \cref{res:characterize_drop_2D}~and \cref{res:characterize_drop_3D}).

We can now answer the question raised by \cref{res:zonotopes_max_drop}.

\begin{theorem}
    \label{res:zonotopes_iff_drop_d_1}
    $P$ has maximal $\drop(P)=d-1$ if and only if $P$ is a zonotope.
\end{theorem}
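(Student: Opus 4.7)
My plan is to prove the ``$\Rightarrow$'' direction (the ``$\Leftarrow$'' is already \cref{res:zonotopes_max_drop}) by induction on the dimension $d$ of $P$. The case $d = 1$ is trivial since every interval is a zonotope. The main structural input will be the classical Bolker--McMullen characterization of zonotopes: a polytope of dimension at least $3$ is a zonotope if and only if each of its $2$-faces is centrally symmetric.

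For the inductive step ($d \ge 3$), suppose $\drop(P) = d-1$. By \cref{res:max_drop_properties}~\itm{1}, every face of $P$ of positive dimension attains the maximal drop in its own dimension. In particular every facet of $P$ is a $(d-1)$-polytope of maximal drop, and so by the inductive hypothesis each facet is a $(d-1)$-zonotope. Since every $2$-face of $P$ occurs as a $2$-face of some facet, and $2$-faces of zonotopes are zonogons (hence centrally symmetric), the Bolker--McMullen theorem then forces $P$ itself to be a zonotope.

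The hard part will be the base case $d = 2$: showing that every polygon of drop $1$ is centrally symmetric (and therefore a zonogon). From \cref{res:max_drop_properties} I only get that the outward edge normals come in antipodal pairs $\{\pm u_1, \ldots, \pm u_k\}$; but as non-symmetric hexagons with three pairs of parallel edges illustrate, this pairing condition is strictly weaker than central symmetry. The additional strength encoded in the drop-$1$ condition---not just drop $\ge 1$---must impose the remaining equalities between the opposite edge heights $h_i$ and $h_i'$ that produce central symmetry up to translation. My plan is to extract these relations by writing out $\adj_P$ from a triangulation of $P$ via \cref{ex:simplices} and the valuation property \cref{res:Omega_valuation}, and then translating the vanishing of the expected top-degree monomial into polynomial equations on the heights. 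A dimension count is encouraging---both the centrally symmetric polygons and the drop-$1$ polygons with fixed antipodal normals form subvarieties of the same dimension modulo translation---but the concrete verification that these two conditions are equivalent is the crux of the argument, and I expect it to require either an induction on the number $k$ of edge directions (with parallelograms as the $k=2$ base case) or a direct exploitation of the parity constraint from \cref{res:drop_properties}~\itm{7} in conjunction with convexity.
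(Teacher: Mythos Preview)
Your reduction to the $d=2$ case is correct and essentially matches the paper's strategy: once you know that every polygon of maximal drop is centrally symmetric, the higher-dimensional statement follows from the fact that all $2$-faces of $P$ inherit maximal drop (\cref{res:max_drop_properties}~\itm1) together with the classical characterization of zonotopes by centrally symmetric $2$-faces.

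The gap is that you have not actually proved the $d=2$ case; you have only outlined a computational strategy (triangulate, expand $\adj_P$, and hope to extract enough polynomial relations on the heights). This approach may eventually work, but the adjoint of a $2k$-gon has degree $k-1$, and the system of coefficient equations you obtain by triangulating is not obviously tractable---your dimension count is suggestive but far from a proof, and neither of your proposed endgames (induction on $k$, or the parity constraint \cref{res:drop_properties}~\ref{it:cs}) is fleshed out. In particular, the parity constraint alone only tells you that a centrally symmetric polygon has odd drop, which is the wrong direction for what you need.

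The paper's argument for $d=2$ avoids any direct computation of $\adj_P$ and instead exploits the translation-invariance of $\Omega_0$ geometrically. Given an edge $e$ and its parallel partner $f$ (with $|e|\ge|f|$), one cuts out from $P$ the parallelogram $Q$ spanned by $f$ and a translate $f'\subseteq e$ of $f$. This leaves two convex pieces $P_1,P_2$ with $\Omega_0(P_1)+\Omega_0(P_2)=\Omega_0(P)-\Omega_0(Q)=0$. By translation-invariance one may slide $P_1$ and $P_2$ back together into a single convex polygon $P'$ with $\Omega_0(P')=0$, hence $\drop(P')>0$, hence (by \cref{res:max_drop_properties}~\itm2 again) $P'$ has an even number of edges. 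But the surgery removed the edge $f$ completely, so parity forces $e$ to have vanished as well, i.e.\ $|e|=|f|$. Since the pair $(e,f)$ was arbitrary, $P$ is centrally symmetric. This is the missing idea in your proposal.
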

\begin{proof}
    That zonotopes have maximal drop $d-1$ was proven in \cref{res:zonotopes_max_drop}.
    
    For the converse, assume that $P$ is a polytope of maximal $\drop(P)=d-1$.
    The statement is clear for $d=1$.

    \begin{figure}[h!]
        \centering
        \includegraphics[width=0.8\linewidth]{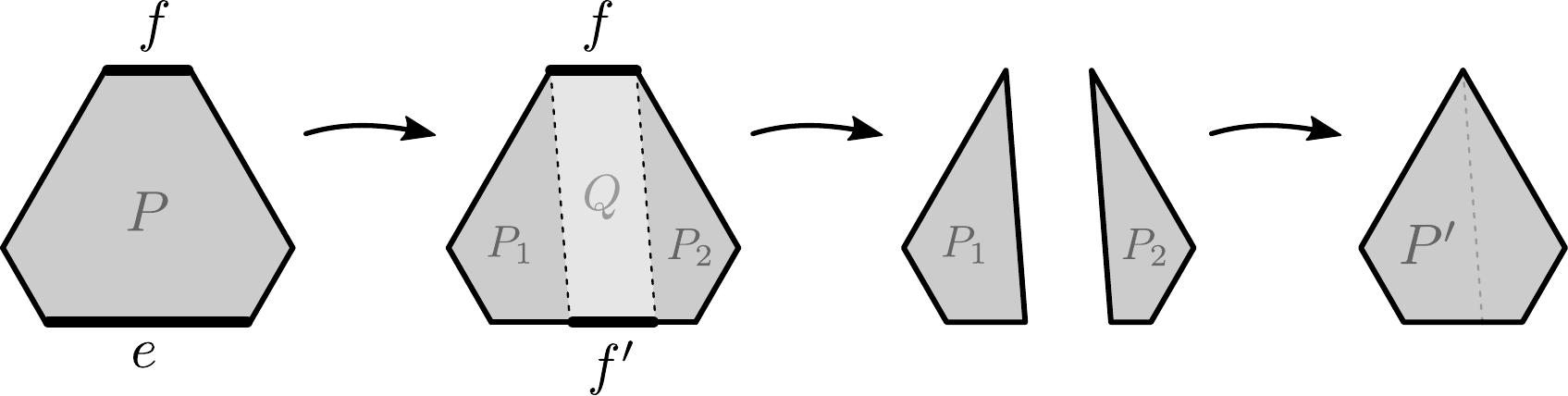}
        \caption{Visualization of the proof that a polygon with non-zero drop must be centrally symmetric.}
        \label{fig:2D_cs_proof}
    \end{figure}
    
    For $d=2$ we proceed as follows: 
    by \cref{res:max_drop_properties} \itm2 each edge of $P$ has a~parallel edge, in particular, the number of edges is even.
    Fix an arbitrary edge~$e$~and~let $f$ be its parallel edge.
    Without loss of generality we may assume that $e$ is not shorter than $f$ and we can therefore choose a sub-segment $f'\subseteq e$ that is a translate of~$f$.
    Then $f$ and $f'$ together form a parallelogram $Q$ that subdivides $P$ into three convex polygons $P=P_1\cupdot Q\cupdot P_2$ as shown in \cref{fig:2D_cs_proof}.
    Recall that both $P$ and $Q$ have non-zero drop (\cf\ \cref{ex:cube}).
    Since $\Omega_0$ is a simple valuation we obtain
    $$\Omega_0(P_1)+\Omega_0(P_2)=\Omega_0(P)-\Omega_0(Q)=0.$$
    Moreover, we can move $P_1$ and $P_2$ together to close the gap in between, and since $\Omega_0$ is translation-invariant, the resulting convex polygon $P'$ still satisfies $\Omega_0(P')=0$.
    As argued above, $P'$ must therefore have an even number of edges.
    But since~deleting $Q$ removed edge $f$ entirely (see \cref{fig:2D_cs_proof}), we can only have an even edge count if also edge $e$ vanished entirely.
    This in turn requires that $e$ and $f$ were of the~same length.
    Since this edge pair was chosen arbitrarily, we conclude that the edges~of~$P$ come in parallel pairs of matching lengths, and hence, that $P$ is centrally symmetric.
    Equivalently, $P$ is a zonotope.

    For $d>2$ we can argue as follows: by \cref{res:max_drop_properties} \itm1 the 2-dimensional faces~of $P$ are themselves of maximal drop in their respective dimension.
    By case $d=2$~they are therefore centrally symmetric.
    But a polytope of dimension $d\ge 3$ all~whose~2-faces are centrally symmetric is a zonotope (see \eg\ \cite[Section 7.3]{ziegler2012lectures}).
\end{proof}


As an aside we found a geometric interpretation for the drop in dimension two.

\begin{corollary}
    \label{res:characterize_drop_2D}
    If $P$ is a polygon, then
    $$\drop(P) = \begin{cases}
        0 & \text{if $P$ is not centrally symmetric,}
        \\
        1 & \text{if $P$ is centrally symmetric.}
    \end{cases}$$
\end{corollary}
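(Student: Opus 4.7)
The plan is to combine the maximal-drop bound from Theorem 3.3 with the zonotope characterization just established in Theorem 4.4. By Theorem 3.3 \itm3 the drop of a polygon is at most $d-1 = 1$, so $\drop(P) \in \{0,1\}$, and there is nothing further to verify beyond identifying which polygons realize the value $1$.

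Next I would invoke Theorem 4.4, which asserts that $\drop(P) = d-1$ if and only if $P$ is a zonotope. In dimension two, being a zonotope is equivalent to being centrally symmetric: a centrally symmetric polygon has an even number of edges grouped into parallel pairs of equal length, and taking one representative segment from each pair recovers $P$ (up to translation) as a Minkowski sum of line segments. Conversely, any Minkowski sum of segments is centrally symmetric. Hence for $d=2$, the condition $\drop(P) = 1$ is exactly central symmetry.

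Putting the two observations together yields the dichotomy: $\drop(P) \in \{0,1\}$, and $\drop(P) = 1$ precisely when $P$ is centrally symmetric, with $\drop(P) = 0$ in the remaining case. No obstacle arises, as the nontrivial content is already packaged into Theorem 4.4; this corollary is purely an unpacking of that result in the low-dimensional setting where the drop takes only two possible values.
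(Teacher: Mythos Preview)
Your proposal is correct and matches the paper's reasoning: the corollary is stated without an explicit proof because it is an immediate consequence of the maximal-drop bound $\drop(P)\le d-1$ together with the zonotope characterization \cref{res:zonotopes_iff_drop_d_1}, specialized to $d=2$ where zonotopes are exactly the centrally symmetric polygons. Your write-up spells out precisely this unpacking.
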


\begin{observation}
    \label{res:drop_yields_cs_result}
    With \cref{res:characterize_drop_2D} we rediscovered \cref{ex:cs} for $d=2$:\nls that central symmetry is a translation scissors invariant follows here from the fact that central symmetry in dimension $d=2$ is equivalent to $\drop(P)=0$, which is translations scissors invariant by \cref{res:drop_0_invariant}.
\end{observation}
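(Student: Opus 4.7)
The plan is to derive this corollary directly from \cref{res:zonotopes_iff_drop_d_1} together with the well-known classification of zonotopes in dimension two.

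First, I would observe that $\drop(P)\in\{0,1\}$ for every polygon: by \cref{res:drop_properties}~\ref{it:max_drop} we have $\drop(P)\le d-1=1$. So the corollary amounts to showing that $\drop(P)=1$ if and only if $P$ is centrally symmetric.

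Next, by \cref{res:zonotopes_iff_drop_d_1} we know that $\drop(P)=d-1=1$ is equivalent to $P$ being a zonotope. Therefore it suffices to recall that in dimension $d=2$, the zonotopes are precisely the centrally symmetric polygons. One direction is immediate: any Minkowski sum of line segments is centrally symmetric. For the converse, if $P$ is a centrally symmetric polygon, its edges come in pairs of parallel segments of equal length; choosing one edge vector $e_i$ from each pair and forming the Minkowski sum $\sum_i [0,e_i]$ produces a zonotope with the same edge vectors as $P$, which therefore coincides with $P$ up to translation. Since the degree drop is translation-invariant by \cref{res:drop_properties}~\ref{it:trafo}, this identification is harmless.

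Combining the two equivalences gives $\drop(P)=1\iff P\text{ is a zonotope}\iff P\text{ is centrally symmetric}$, and the remaining case $\drop(P)=0$ corresponds exactly to $P$ not being centrally symmetric. No step here is a real obstacle; the entire content has already been absorbed into \cref{res:zonotopes_iff_drop_d_1}, and all that remains is to invoke the 2D zonotope classification.
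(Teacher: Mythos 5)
Your argument correctly establishes the equivalence ``$P$ centrally symmetric $\Leftrightarrow$ $\drop(P)=1$ $\Leftrightarrow$ $\drop(P)\neq 0$'' for polygons, and it does so along essentially the same lines as the paper: $\drop(P)\le 1$ by \cref{res:drop_properties} \ref{it:max_drop}, $\drop(P)=1$ iff $P$ is a zonotope by \cref{res:zonotopes_iff_drop_d_1}, and planar zonotopes are exactly the centrally symmetric polygons (your reconstruction of a centrally symmetric polygon, up to translation, as the Minkowski sum of one edge vector per parallel pair is fine). In the paper this equivalence is precisely \cref{res:characterize_drop_2D}, which the observation is entitled to quote, so you are re-deriving an ingredient rather than using it; that is harmless, since the $d=2$ case inside the proof of \cref{res:zonotopes_iff_drop_d_1} already contains the fact ``maximal drop $\Leftrightarrow$ centrally symmetric $\Leftrightarrow$ zonotope''.

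The gap is that the statement to be proved is not the classification itself but its scissors-congruence consequence: that central symmetry of polygons is a \emph{translation scissors invariant}, i.e.\ the paper's rediscovery of \cref{ex:cs} for $d=2$. Your proposal stops at the equivalence and never draws this conclusion, nor does it mention \cref{res:drop_0_invariant} at all. The missing (short but essential) step is: by \cref{res:drop_0_invariant}, which rests on \cref{res:Omgea0_0_iff_drop_0} and the fact that $\Omega_0$ is a simple translation-invariant valuation, the property ``$\drop(P)=0$'' is preserved under translation scissors congruence; since this congruence is a symmetric relation, the complementary property ``$\drop(P)\neq 0$'' is preserved as well; and by your equivalence that complementary property is exactly central symmetry in dimension two. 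Adding this deduction turns your proof of \cref{res:characterize_drop_2D} into a proof of \cref{res:drop_yields_cs_result}. (A side remark: you correctly pair central symmetry with $\drop(P)=1$; the observation's wording ``equivalent to $\drop(P)=0$'' should be read as referring to the negation, consistent with \cref{res:characterize_drop_2D}.)
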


Another way to phrase \cref{res:drop_yields_cs_result} is that the value of $\drop(P)$~is~a~translation scissors invariant in dimension $d=2$.
Is this also true in higher dimensions?
%
Unfortunately, there is a major shortcoming of the reduced canonical form $\Omega_0$ as compared to $\Omega$:
it does not ``see'' the exact drop of $P$, but merely whether the drop is zero or non-zero. 
This captures the full range of possibilities for $d=2$, but not for $d\ge 3$.
One attempt at capturing higher values of the drop using valuations~is discussed in \cref{sec:higher_valuations}.
Here we prove the surprising fact that the drop~is~also~invariant in dimension $d=3$:


\begin{theorem}
    \label{res:d_le_3_preserves_drop}
    If $d\le 3$ then $\drop(P)$ is a translation scissors invariant.
\end{theorem}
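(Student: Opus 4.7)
The plan is to combine the translation scissors invariants already established in this excerpt: the vanishing (or not) of $\Omega_0$, which by \cref{res:Omgea0_0_iff_drop_0} distinguishes $\drop(P)=0$ from $\drop(P)\ge 1$, together with translation scissors invariance of central symmetry from \cref{ex:cs}, the parity constraint of \cref{res:drop_properties} \itm7, and the zonotope characterization from \cref{res:zonotopes_iff_drop_d_1}. For $d\le 3$ the drop only takes values in $\{0,\dots,d-1\}$, so these binary invariants are enough to isolate each possible value.

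For $d=1$ every polytope is an interval with drop $0$, so the claim is trivial. For $d=2$ the bound \cref{res:drop_properties} \itm3 gives $\drop(P)\in\{0,1\}$, and \cref{res:drop_0_invariant} already covers both values: if $P,Q$ are translation scissors congruent then $\Omega_0(P)=\Omega_0(Q)$, so $\drop(P)=0\Leftrightarrow\drop(Q)=0$, hence also $\drop(P)=1\Leftrightarrow\drop(Q)=1$.

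For $d=3$ the drop lies in $\{0,1,2\}$, so it suffices to show that both $\drop(P)=0$ and $\drop(P)=2$ are individually translation scissors invariant; the middle value $\drop(P)=1$ is then forced. The case $\drop(P)=0$ is handled by \cref{res:drop_0_invariant}. For the case $\drop(P)=2$, the key observation is the logical equivalence
\[
\drop(P)=2\quad\Longleftrightarrow\quad \text{$P$ is centrally symmetric}\;\text{and}\;\drop(P)\ge 1,
\]
valid for $d=3$. Indeed, $(\Rightarrow)$: \cref{res:zonotopes_iff_drop_d_1} identifies such $P$ as a zonotope, and zonotopes are centrally symmetric; and obviously $\drop(P)\ge 1$. $(\Leftarrow)$: in odd dimension $d=3$, \cref{res:drop_properties} \itm7 forces a centrally symmetric polytope to have even drop, so the only even value in $\{1,2\}$ is $2$. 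Both conditions on the right-hand side are translation scissors invariant — the first by \cref{ex:cs}, the second by \cref{res:drop_0_invariant} — so their conjunction is as well.

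I don't expect a serious obstacle here, since all ingredients are already in place; the only mildly subtle step is recognizing that parity plus central symmetry plus the zonotope characterization combine to promote ``drop $\ge 1$ and centrally symmetric'' all the way up to ``$\drop=2$'' in exactly dimension three. What this argument does \emph{not} generalize to is $d\ge 4$, where the drop can take values $0,1,\dots,d-1$ and binary invariants such as $\Omega_0$ and central symmetry no longer suffice to separate all of them; the valuations $\Omega_s$ introduced in \cref{sec:higher_valuations} are designed to overcome this limitation, but they are not translation-invariant, so their use for scissors congruence would require more care.
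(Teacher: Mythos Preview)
Your proof is correct and follows essentially the same approach as the paper: both arguments use the $\Omega_0$-invariant to separate $\drop=0$ from $\drop\ge 1$, and then combine the parity constraint for centrally symmetric polytopes with the zonotope characterization to separate $\drop=1$ from $\drop=2$ in dimension three. The paper packages this as an explicit three-case characterization of $\drop(P)$ in terms of $\Omega_0(P)$ and central symmetry, whereas you phrase it as showing that $\drop=0$ and $\drop=2$ are each individually invariant (with $\drop=1$ forced by exclusion), but the logical content is identical.
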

\begin{proof}
    For $d=1$ this is trivial; for $d=2$ it follows from \cref{res:characterize_drop_2D}.

    For $d=3$ we have $\drop(P)\in\{0,1,2\}$ and the following facts: centrally symmetric polytopes have an even drop (see \cref{res:drop_properties} \ref{it:cs}), and zonotopes~are~centrally symmetric 
    and characterized by $\drop(P)=2$ (see \cref{res:zonotopes_iff_drop_d_1}). 
    Taken together~we obtain:
    \begin{align*}
        \drop(P) = \begin{cases}
            0 & \text{if $\Omega_0(P)\not=0$,}
            \\
            1 & \text{if $\Omega_0(P)=0$ and $P$ is not centrally symmetric,}
            \\
            2 & \text{if $\Omega_0(P)=0$ and $P$ is centrally symmetric.}
        \end{cases}
    \end{align*}
    %
    The statement then follows from the fact that both ``$\Omega_0(P)=0$'' and ``$P$ is centrally symmetric'' are translation scissors invariants (\cf\ \cref{res:drop_0_invariant} and \cref{ex:cs}).
    %
\end{proof}

Together with \cref{res:zonotopes_iff_drop_d_1} this yields a purely geometry statement about translation scissors invariance that we believe is not in the literature:

\begin{corollary}
    In dimension $d\le 3$ being a zonotope is translation scissors invariant. 
\end{corollary}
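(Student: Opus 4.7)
The plan is to derive this as a direct consequence of the two preceding main results, namely \cref{res:zonotopes_iff_drop_d_1} (zonotopes are characterized by maximal drop $d-1$) and \cref{res:d_le_3_preserves_drop} (the drop itself is translation scissors invariant when $d\le 3$). Concretely, I would argue as follows: suppose $P,Q\subset\RR^d$ with $d\le 3$ are translation scissors congruent. Any translation scissors congruence preserves volume, so $P$ and $Q$ share the same affine dimension $d'\le d$; restricting to their affine hulls we may treat them as full-dimensional polytopes in dimension $d'\le 3$. Invoking \cref{res:d_le_3_preserves_drop} in this dimension gives $\drop(P)=\drop(Q)$, and then \cref{res:zonotopes_iff_drop_d_1} yields
\[
P\text{ is a zonotope}\;\Longleftrightarrow\;\drop(P)=d'-1\;\Longleftrightarrow\;\drop(Q)=d'-1\;\Longleftrightarrow\;Q\text{ is a zonotope}.
\]

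There is essentially no obstacle, since all the work has been done in the two cited results. The only thing one should be slightly careful about is ensuring that $P$ and $Q$ have the same dimension before applying \cref{res:d_le_3_preserves_drop}, since $\drop$ is defined for full-dimensional polytopes by restricting to the affine hull; this however is immediate from translation scissors congruence (equal total volume, or equivalently equal affine hulls up to translation by standard scissors arguments). Hence the corollary amounts to a one-line deduction, and no further machinery is needed.
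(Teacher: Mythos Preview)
Your proposal is correct and matches the paper's approach exactly: the corollary is stated immediately after \cref{res:d_le_3_preserves_drop} as a direct combination of that result with \cref{res:zonotopes_iff_drop_d_1}, with no further argument given. Your extra care about affine dimension is harmless but unnecessary in the paper's setting, where polytopes are taken to be full-dimensional throughout.
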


\begin{remark}
    It is a classical result that a polytope that tiles Euclidean space by translates -- a so-called \emph{parallelotope} -- is translation scissors congruent to~a~cube~\cite{lagariasmoews}.
    The list of different combinatorial types of parallelotopes is know up to dimension $d=5$ \cite{dutour2016complete,garber2019voronoi}.
    Interestingly, for $d=3$ this list contains only zonotopes. \Cref{res:d_le_3_preserves_drop} provides a quick argument for this fact. 
    Conversely, in dimension $d\ge 4$~there~are polytopes that are translation scissors congruent to the cube but that are \textit{not} zonotopes, \eg\ the 24-cell, which is a parallelotope for $d=4$.
    We conclude that for $d\ge 4$ the drop is \textit{not} a translation scissors invariant.
\end{remark}



We close this section with an analogue of \cref{res:Omega_trafo} for $\Omega_0$. 

\begin{corollary}
    \label{res:Omega0_trafo}
    For $S\in\GL(\RR^d)$ holds
    %
    $$ \Omega_0(SP,Sx)= |\!\det(S)|^{-1}\,\Omega_0(P;x).$$
\end{corollary}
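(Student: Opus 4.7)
The plan is to deduce this directly from \cref{res:Omega_trafo} by homogenizing and passing to the hyperplane at infinity. Concretely, specializing \cref{res:Omega_trafo} with $t=0$ and substituting $y = S^{-1}x/x_0$ in the identity $\Omega(SP;Sy) = |\!\det(S)|^{-1}\Omega(P;y)$ gives
$$\Omega(SP;x/x_0) = |\!\det(S)|^{-1}\,\Omega(P;S^{-1}x/x_0).$$
Multiplying both sides by $x_0^{-d-1}$ and using the definition $\Omega(Q;x_0,x) = x_0^{-d-1}\Omega(Q;x/x_0)$ of the homogenized canonical form from \cref{sec:homogenized_Omega}, this becomes
$$\Omega(SP;x_0,x) = |\!\det(S)|^{-1}\,\Omega(P;x_0,S^{-1}x).$$

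Next I would evaluate both sides at $x_0=0$, which by the definition of $\Omega_0$ in \eqref{eq:Omega0_def} yields $\Omega_0(SP;x) = |\!\det(S)|^{-1}\,\Omega_0(P;S^{-1}x)$. Substituting $x\mapsto Sx$ gives the claim. The whole argument is just a two- or three-line transfer of \cref{res:Omega_trafo} through the construction of $\Omega_0$.

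There is no real obstacle here: the only subtle point to watch is that the construction of $\Omega_0$ (homogenizing to the expected top degrees and then setting $x_0=0$) commutes with linear substitutions in the $x$-variable, since such substitutions preserve total degree in $x$ and so preserve the top-degree components in both numerator and denominator. The scalar factor $|\!\det(S)|^{-1}$ is independent of $x$ and therefore carries through the restriction to $x_0=0$ unchanged. Note also that unlike in \cref{res:Omega_trafo} we do not need (and cannot allow) a translation $t$, which is consistent with the fact that $\Omega_0$ is already translation-invariant by \cref{res:translation_invariant}.
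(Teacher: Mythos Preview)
Your proof is correct and follows essentially the same approach as the paper: both pass from \cref{res:Omega_trafo} to the homogenized canonical form via $\Omega(Q;x_0,x)=x_0^{-d-1}\Omega(Q;x/x_0)$ and then set $x_0=0$. The only cosmetic difference is that the paper applies the substitution $x\mapsto Sx$ at the outset (computing $\Omega(SP;x_0,Sx)$ directly), whereas you apply it at the end.
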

\begin{proof}
    By the definition of the homogenized canonical form and \cref{res:Omega_trafo}, we~have
    \begin{align*}
        \Omega(SP;x_0,Sx)
        &=x_0^{-d-1}\, \Omega(SP;Sx/x_0)
        = x_0^{-d-1}\, \Omega(SP;S(x/x_0))
        \\&\overset{\mathclap{\text{\ref{res:Omega_trafo}}}}=\, |\!\det(S)|^{-1} x_0^{-d-1} \Omega(P;x/x_0)
        = |\!\det(S)|^{-1}\, \Omega(P;x_0,x).
    \end{align*}
    Setting $x_0=0$ on both sides yields the claim.
\end{proof}

%


\section{Homogeneity}
\label{sec:homogeneous}

A translation-invariant valuation $\phi$ is \Def{$k$-homogeneous} if for all $\lambda>0$ holds
$$\phi(\lambda P)=\lambda^k \phi(P).$$
In this section we prove that $\Omega_0$ is homogeneous and explore the implications.




\iffalse 

\begin{lemma}
    It holds
    \begin{myenumerate}
        \item $\Omega_0$ is 1-homogeneous, that is, $\Omega_0(\lambda P)=\lambda \Omega_0(P)$ for all $\lambda>0$.
        \item $\Omega_0$ has opposite parity to $d$, that is, $\Omega_0(-P)=(-1)^{d+1} \Omega_0(P)$.
    \end{myenumerate}
\end{lemma}
\begin{proof}
    We use \cref{res:Omega0_trafo} and that $\Omega_0$ is a homogeneous rational function of degree $-d-1-\drop(P)$.
    If we set $s:=\drop(P)$, then
    \begin{align*}
        \Omega_0(\lambda P;x) 
            &\overset{\mathclap{\smash{\ref{res:Omega0_trafo}}}}= \lambda^{-d} \Omega_0(P;x/\lambda)
            = \lambda^{-d} \lambda^{d+1+s} \Omega_0(P;x)
            = \lambda^{s+1} \Omega_0(P;x),
    \\&\hspace{-2em}\text{and}
    \\\Omega_0(-P;x)  &\overset{\mathclap{\smash{\ref{res:Omega0_trafo}}}}= \Omega_0(P;-x) = (-1)^{-d-1-s}\, \Omega_0(P;x).
    \end{align*}
    In both cases: if $s>0$ then $\Omega_0(P)=0$ and the result holds trivially; and if $s=0$~we obtain the claimed result as well.
\end{proof}

\else

\begin{lemma}
    \label{res:1_homogeneous}
    $\Omega_0$ is 1-homogeneous.
\end{lemma}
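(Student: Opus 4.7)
The plan is to derive 1-homogeneity of $\Omega_0$ by combining two facts: the linear transformation behavior from \cref{res:Omega0_trafo}, and the inherent homogeneity of $\Omega_0(P;x)$ as a rational function in $x$.

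First I would apply \cref{res:Omega0_trafo} with the dilation $S=\lambda\Id$. Since $|\!\det(\lambda\Id)|=\lambda^d$, this immediately gives
$$\Omega_0(\lambda P;\lambda x) = \lambda^{-d}\,\Omega_0(P;x).$$

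Next I would observe that, as a rational function in $x$, $\Omega_0(P;x)$ is homogeneous of degree $-d-1$. Indeed, inspecting the defining formula \eqref{eq:Omega0_def}, the denominator $\prod_F\<x,u_F\>$ is homogeneous of degree $m$ in $x$, while the numerator $\adj_P(x_0,x)|_{x_0=0}$ is the pure-$x$ component of the homogenization of $\adj_P$ to degree $m-d-1$. If $\drop(P)=0$, this component is a nonzero homogeneous polynomial of degree $m-d-1$; if $\drop(P)>0$, the homogenization is divisible by $x_0$ and the restriction vanishes, in which case $\Omega_0(P)=0$ and homogeneity is trivial. Either way,
$$\Omega_0(P;\lambda x) = \lambda^{-d-1}\,\Omega_0(P;x).$$

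Combining the two displays yields
$$\Omega_0(\lambda P;x) = \Omega_0(\lambda P;\lambda(x/\lambda)) = \lambda^{-d}\,\Omega_0(P;x/\lambda) = \lambda^{-d}\cdot\lambda^{d+1}\,\Omega_0(P;x) = \lambda\,\Omega_0(P;x),$$
which is the claim. There is no genuine obstacle here: the only subtlety is making sure the homogeneity statement about $\Omega_0(P;x)$ in $x$ is correct in the degenerate case $\drop(P)>0$, and this is handled by the trivial observation that $\Omega_0(P)=0$ in that case (\cf\ \cref{res:Omgea0_0_iff_drop_0}).
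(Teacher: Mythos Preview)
Your proof is correct and follows essentially the same approach as the paper: both apply \cref{res:Omega0_trafo} with $S=\lambda\Id$, use that $\Omega_0(P;x)$ is a homogeneous rational function of degree $-d-1$ in $x$ (handling the $\Omega_0(P)=0$ case separately), and combine these to obtain $\Omega_0(\lambda P;x)=\lambda\,\Omega_0(P;x)$.
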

\begin{proof}
    The statement is clear if $\Omega_0(P)=0$.
    Otherwise, $\Omega_0(P)$ is a homogeneous~rational function of degree $-d-1-\drop(P)$.
    Using \cref{res:Omega0_trafo} and $\lambda>0$ we~obtain
    \[
        \Omega_0(\lambda P;x) 
            \,\overset{\mathclap{\smash{\ref{res:Omega0_trafo}}}}=\, \lambda^{-d} \Omega_0(P;x/\lambda)
            = \lambda^{-d} \lambda^{d+1} \Omega_0(P;x)
            = \lambda \Omega_0(P;x).
            \qedhere
    \]
    %
    %
\end{proof}

\fi

Homogeneity gives us access to a variety of results from valuation theory.
For~example, 1-homogeneous translation-invariant valuations are \Def{Minkowski additive} (see \cite[Remark 6.3.3]{schneider2013convex}):

\begin{corollary}
\label{res:Minkowski_additivity}
$\Omega_0(P_1+\cdots+ P_n)=\Omega_0(P_1) + \cdots + \Omega_0(P_n).$
\end{corollary}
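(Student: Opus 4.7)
The plan is to deduce Minkowski additivity directly from the classical fact that every translation-invariant $1$-homogeneous valuation on polytopes is Minkowski additive, which is exactly the statement of Schneider's Remark 6.3.3 referenced immediately before the corollary. All the hypotheses are already verified in the preceding sections: $\Omega_0$ is a valuation (by construction from the valuation $\Omega$), it is translation-invariant by \cref{res:translation_invariant}, and it is $1$-homogeneous by \cref{res:1_homogeneous}.

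For completeness I would sketch the argument behind the cited result, which rests on McMullen's polynomial expansion. Fix polytopes $P_1,\ldots,P_n$ and consider the function
$$f(\lambda_1,\ldots,\lambda_n) := \Omega_0(\lambda_1 P_1 + \cdots + \lambda_n P_n)$$
for non-negative scalars $\lambda_i$. McMullen's theorem, applied to the translation-invariant valuation $\Omega_0$, asserts that $f$ is a polynomial in $\lambda_1,\ldots,\lambda_n$. Combined with $1$-homogeneity, namely $\Omega_0(t\freespace Q) = t\,\Omega_0(Q)$ for $t>0$, the polynomial $f$ must be homogeneous of degree one, hence a linear form. Evaluating at $\lambda_i=1$ and $\lambda_j=0$ for $j\neq i$ identifies the coefficient as $\Omega_0(P_i)$, and setting all $\lambda_i=1$ yields the claim.

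The only subtlety worth noting is that $\Omega_0$ takes values not in $\RR$ but in the field of rational functions in $x$. This is harmless: one can either invoke McMullen's result for valuations taking values in an arbitrary abelian group (the original proof is insensitive to the codomain, as it manipulates only the polytope arguments), or evaluate both sides at a generic point $x$ in the intersection of the domains of definition of $\Omega_0(P_1),\ldots,\Omega_0(P_n)$, apply the classical $\RR$-valued result pointwise, and then observe that two rational functions in $x$ that agree on a non-empty Zariski-open set coincide.
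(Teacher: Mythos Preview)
Your proposal is correct and matches the paper's approach exactly: the paper simply cites Schneider's Remark~6.3.3 that translation-invariant $1$-homogeneous valuations are Minkowski additive, and you invoke the same fact. Your additional sketch of the McMullen polynomial expansion and the remark about the rational-function codomain go beyond what the paper provides but are welcome clarifications.
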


So far, our repertoire of polytopes with a non-zero drop is rather small.
In particular, in dimension three we are limited to zonotopes.
We can use Minkowski~additivity to construct many more examples:

\begin{example}
    \label{ex:Minkowski_sum}
    Let $P_1,...,P_n\subset\RR^d$ be convex polytopes with empty interior. 
    For them $\Omega_0(P_i)=0$, and so $\Omega_0(P_1+\cdots+ P_n)=0$ by Minkowski additivity.

    Consider for example the 3-dimensional polytope $P$ shown in \cref{fig:half_zonotope} (left)~which is constructed as the Minkowski sum of two triangles.
    Since $P$ is not a zonotope, it is our first 3-dimensional example for which we can deduce $\drop(P)=1$.
    Interestingly, this~$P$~can also be obtained as ``half of a zonotope''\! by cutting the latter~with a central hyperplane (see \cref{fig:half_zonotope}, right).
    This will later give us a second proof for $\Omega_0(P)=0$ (see \cref{res:half_cs_polytope}).
\end{example}

\begin{figure}[h!]
    \centering
    \includegraphics[width=0.63\linewidth]{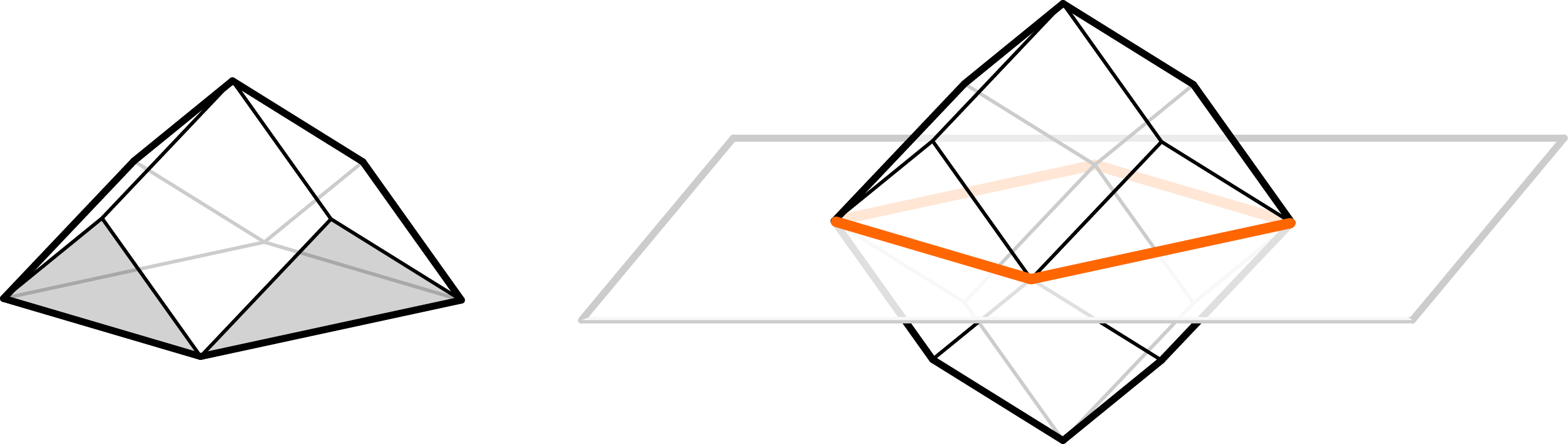}
    \caption{The polytope on the left is the Minkowski sum of the two shaded triangular faces.
    The polytope on the right is a zonotope with~a plane that dissects it into two identical halves, both of which are isometric to the polytope on the left.}
    \label{fig:half_zonotope}
\end{figure}





\subsection{A decomposition formula for $\bs{\Omega_0}$}

The following is a deep theorem due~to McMullen (see \cite[Theorem 6.4.7]{schneider2013convex}): a valuation 
$\phi$ on convex polytopes
is simultane\-ously translation-invariant, 1-homogeneous and weakly continuous if and only if it permits a decomposition
%
%
\begin{equation}
    \label{eq:McMullen_decomposition}
    \phi(P) = \sum_{e\subset P} \ell_e \psi(N_P(e)),
\end{equation}
where the sum is over the edges $e$ of $P$, $\ell_e$ is the length of the edge $e$, $\psi$ is a valuation defined on $(d-1)$-dimensional cones, and $N_P(e)$ is the normal cone at $e$.\nls
\emph{Weak~continuity} means that $\phi$ is continuous under translation of facet hyperplanes.

To apply McMullen's result to $\Omega_0$ we would first need to verify weak continuity, and we would get no information for what valuation $\psi$ to chose.
Instead we managed to prove a decomposition as in \eqref{eq:McMullen_decomposition} independently.



\begin{theorem}
    \label{res:edge_decomposition}
    For a convex polytopes $P\subset\RR^d$ holds
    \begin{equation}
        \label{eq:edge_decomposition}
        \Omega_0(P;x) = -\frac{1}{\|x\|^2}\sum_{e\subset P} \ell_e\, \Omega^{d-1}(T_e;\pi_e x),
    \end{equation}
    %
    where the sum is over the edges $e$ of $P$, $\ell_e$ is the length of edge $e$, 
    %
    %
    $\pi_e$ is the orthogonal projection onto $e^\bot$, and $T_e:=\pi_eT_P(e)\subset e^{\bot}$ is the projected tangent cone at $e$.
    %
\end{theorem}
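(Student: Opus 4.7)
My plan is to prove the identity by establishing both sides as simple, translation-invariant valuations on convex polytopes, then reducing via triangulation to the case $P = \Delta$ is a simplex, where the identity becomes a direct computation with the formulas of \cref{ex:simplices}.

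The left-hand side is a simple, translation-invariant, $1$-homogeneous valuation by \cref{res:translation_invariant} and \cref{res:1_homogeneous}. Denoting the right-hand side by $\Phi(P;x)$, the first main step is to show $\Phi$ is a simple valuation. Simplicity is immediate: if $P$ is lower-dimensional, every projected tangent cone $T_e$ fails to be full-dimensional in $e^\perp$, so $\Omega^{d-1}(T_e) = 0$. For the valuation property under a convex decomposition $P = P_1 \cupdot P_2$ along a hyperplane $H$, edges fall into four classes: \itm1 edges of $P$ disjoint from $H$ belong to exactly one of $P_1, P_2$ with unchanged tangent cone; \itm2 edges of $P$ transversely crossing $H$ split into subedges whose tangent cones coincide with the original $T_P(e)$, and the lengths add; \itm3 edges of $P$ lying in $H$ belong to both pieces with $T_e^{P_1} \cupdot T_e^{P_2} = T_e^P$, so the valuation property of $\Omega^{d-1}$ reproduces $\ell_e \Omega^{d-1}(T_e^P)$; \itm4 new edges $e \subset H$ that are not edges of $P$ lie in the relative interior of a face of $P$ of dimension at least two, so $T_P(e)$ carries positive-dimensional lineality and hence $\Omega^{d-1}(T_e^P) = 0$. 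The valuation property of $\Omega^{d-1}$ then forces the $P_1$ and $P_2$ contributions of such a new edge to cancel, which yields the key cancellation needed for $\Phi$ to be a valuation. Translation-invariance of $\Phi$ is clear, since edge lengths and projected tangent cones are translation-invariant.

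Once $\Phi$ is shown to be a simple valuation, triangulation reduces the identity to the case $P = \Delta$. For a simplex with unit outer facet normals $u_0, \ldots, u_d$, the left-hand side is, by \cref{ex:simplices} and \eqref{eq:Omega0_def}, of the form $(-1)^{d+1}\adj_\Delta / \prod_i \langle u_i, x\rangle$ with $\adj_\Delta$ a constant. The right-hand side is a sum over the $\binom{d+1}{2}$ edges $e_{ij}$, each contributing, by \cref{ex:simplices} applied to the simplicial cone $T_{e_{ij}} \subset e_{ij}^\perp$ with normals $\pi_{e_{ij}} u_k$ for $k \neq i,j$, a determinant of projected normals divided by $\prod_{k\neq i,j}\langle u_k, x\rangle$. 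Clearing the common denominator $\prod_i \langle u_i, x\rangle$ reduces the identity to a polynomial identity of degree $2$ in $x$, which can be established by recognizing the edge sum as a cofactor expansion of the $(d+1)\times(d+1)$ adjoint determinant combined with a Cauchy--Binet-type identity that produces the $\|x\|^2$ prefactor.

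The main obstacle will be this final simplex computation: while the valuation and reduction steps are conceptually clean, the explicit matching of the $\binom{d+1}{2}$ projected-normal determinants against the single adjoint determinant of $\Delta$, together with the extraction of the $\|x\|^2$ factor, requires careful sign and projection bookkeeping. As a possible shortcut one may instead verify weak continuity of $\Omega_0$ and invoke McMullen's theorem to \emph{a priori} produce a decomposition of the claimed shape; only the identification of the unknown cone valuation $\psi(N_P(e))$ with the candidate $-\|x\|^{-2}\Omega^{d-1}(T_e;\pi_e x)$ then remains, to be checked on a single test family such as parallelepipeds or products of simplices.
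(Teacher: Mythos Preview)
Your overall strategy---verify that both sides define simple valuations, then reduce to pieces---is exactly the paper's approach, and your case analysis for why the right-hand side $\Phi$ is a simple valuation (especially the handling of new edges in $H$ via lineality of the tangent cone) is more explicit than anything the paper writes down.

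The genuine gap is the final step. The identity on a general simplex is \emph{precisely} the content of \cref{res:simplex_qudratic_forms}, and the paper explicitly states that they have \emph{no} direct matrix-theoretic proof of it: ``So far our only proof of \cref{res:simplex_qudratic_forms} relies on the valuation properties of $\Omega_0$.'' Your sketch (``cofactor expansion \ldots\ combined with a Cauchy--Binet-type identity that produces the $\|x\|^2$ prefactor'') is exactly the kind of elementary argument the authors say they are missing. So at this point your proof is circular in spirit: you have reduced to the one statement the paper could not prove directly.

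The paper's way out is to decompose \emph{further}, not into simplices but into \emph{ortho-simplices} (orthoschemes), using a signed flag decomposition. For an ortho-simplex with parameters $\ell_1,\ldots,\ell_d$ the vertices, normals, heights, $\adj_\Delta$, and all the edge quantities $\ell_{ij}\adj_{T_{ij}}$ have completely explicit closed forms in the $\ell_k$. After substitution the identity collapses to
\[
\sum_{k=1}^d \ell_k^2 y_k^2 \;=\; -\sum_{i<j}(\ell_{i+1}^2+\cdots+\ell_j^2)(y_{i+1}-y_i)(y_{j+1}-y_j),
\]
which is verified by two telescoping sums. This is the missing idea: the right class of pieces is not simplices but orthoschemes, because only there does the quadratic-form identity become tractable by hand.

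Your McMullen alternative inherits the same problem. Even granting weak continuity (which the paper does not verify), you still have to identify $\psi$ with the candidate, and checking this on parallelepipeds is vacuous (both sides vanish by \cref{ex:cube_drop}), while checking it on a simplex or a product of simplices brings you right back to \cref{res:simplex_qudratic_forms}.
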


We note that \cref{res:edge_decomposition} uses the projected tangent cone $\pi_eT_P(e)$ as opposed~to the normal cone $N_P(e)$ as in \eqref{eq:McMullen_decomposition}.
The theorem can nevertheless be understood~as an instance of \eqref{eq:McMullen_decomposition} because $N_P(e)$ and $T_P(e)$ are related through polar duality,\nls and valuations on cones can be interpreted as valuations on their polar duals.

%
Alternatively, we can express \eqref{eq:edge_decomposition} as a polynomial identity in terms of adjoints by substituting in \eqref{eq:Omega_represenation} and~\eqref{eq:Omega0_def}.
The canonical form of $T_e$ can be written as
\begin{equation}
    \label{eq:canonical_form_on_cones}
    \Omega^{d-1}(T_e;\pi_e x) = (-1)^{m_e}\frac{\adj_{T_e}(\pi_ex)}{\prod_{\substack{F\subset P\\F\supset e}} \<x, u_F\>},
\end{equation}
%
where $m_e$ is the number of facets of $T_e$, and we used $\<\pi_ex,u_F\>=\<x,u_F\>$. 
\mbox{Multiplying} \eqref{eq:edge_decomposition} through with $\|x\|^2$ and $\prod_i\<x,u_i\>$ we arrive at a polynomial identity equivalent to \cref{res:edge_decomposition}:
%
\begin{corollary}
    \label{res:edge_polynomial_identity}
    For a convex polytopes $P\subset\RR^d$ holds
    $$\adj_P^0(x) \cdot \|x\|^2 = - \sum_{e\subset P} (-1)^{m-m_e} \ell_e \adj_{T_e}\!(\pi_e x) \!\prod_{\substack{F\subset P\\F\not\supset e}} \!\<x,u_F\>,$$
    where $\adj_P^0(x):=\adj_P(x_0,x)|_{x_0=0}$ is the homogenized adjoint evaluated at $x_0=0$.
    %
    %
\end{corollary}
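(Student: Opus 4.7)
The plan is to derive this corollary directly from \cref{res:edge_decomposition} as a purely algebraic manipulation; the substantive geometric content lives in the theorem, and the corollary merely clears denominators and records the polynomial identity that this produces.

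First I would rewrite each side of \eqref{eq:edge_decomposition} using the explicit representations of $\Omega_0$ and of the canonical form on a cone. For the left-hand side, the definition \eqref{eq:Omega0_def} gives
\[
\Omega_0(P;x) = (-1)^m \frac{\adj_P^0(x)}{\prod_{F\subset P} \<x,u_F\>},
\]
where I adopt the notation $\adj_P^0(x) := \adj_P(x_0,x)|_{x_0=0}$. For each edge $e$ I would use \eqref{eq:canonical_form_on_cones}, which expresses
\[
\Omega^{d-1}(T_e;\pi_e x) = (-1)^{m_e}\frac{\adj_{T_e}(\pi_e x)}{\prod_{F\supset e} \<x,u_F\>},
\]
where only the facets of $P$ containing $e$ appear in the denominator because those are precisely the facets of $T_e$ (and $\<\pi_e x,u_F\>=\<x,u_F\>$ for such $F$, since $u_F\perp e$).

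Next, I would multiply the identity in \cref{res:edge_decomposition} through by $\|x\|^2\prod_{F\subset P}\<x,u_F\>$. On the left this produces $(-1)^m\adj_P^0(x)\|x\|^2$. On the right the factor $\|x\|^2$ cancels and, for each edge $e$, the denominator $\prod_{F\supset e}\<x,u_F\>$ cancels against the corresponding factors in $\prod_F\<x,u_F\>$, leaving the complementary product $\prod_{F\not\supset e}\<x,u_F\>$. Collecting signs gives
\[
(-1)^m\adj_P^0(x)\|x\|^2 = -\sum_{e\subset P}(-1)^{m_e}\ell_e\,\adj_{T_e}(\pi_e x)\!\!\prod_{\substack{F\subset P\\F\not\supset e}}\!\!\<x,u_F\>.
\]
Multiplying both sides by $(-1)^m$ and using $(-1)^{m+m_e}=(-1)^{m-m_e}$ yields the stated formula. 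No step beyond bookkeeping of signs and cancellations is required, so there is no real obstacle once \cref{res:edge_decomposition} is in hand; the only point that needs care is to confirm that $\prod_F\<x,u_F\>/\prod_{F\supset e}\<x,u_F\> = \prod_{F\not\supset e}\<x,u_F\>$, which is immediate from the facet description of $T_e$.
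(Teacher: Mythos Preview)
Your proposal is correct and matches the paper's own derivation essentially step for step: the paper likewise substitutes \eqref{eq:Omega0_def} and \eqref{eq:canonical_form_on_cones} into \eqref{eq:edge_decomposition} and multiplies through by $\|x\|^2\prod_F\<x,u_F\>$ to clear denominators, with the same sign bookkeeping.
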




To prove \cref{res:edge_decomposition} we can employ a standard strategy from valuation theory: since both sides of \eqref{eq:edge_decomposition} are valuative, we can decompose the polytope into simpler pieces (typically simplices) and prove the statement only on these simple pieces.
It does however turn out that already on simplices \cref{res:edge_decomposition} is a rather non-trivial statement that is interesting in its own right.
We therefore first explore the simplex case in \cref{sec:decomposition_on_simplices}, and postpone a proof to \cref{sec:decomposition_proof}.

\subsection{\cref{res:edge_decomposition} for simplices}
\label{sec:decomposition_on_simplices}

For a $d$-dimensional simplex $\Delta\subset\RR^d$ the adjoint is of expected and actual degree $m-d-1=(d+1)-d-1=0$.
That is, $\adj_\Delta$ is a number and is invariant under restriction to monomials of expected maximal~degree.
Likewise, tangent cones at edges of $\Delta$ are simplicial, and their adjoints too are~just numbers.
Lastly, each edge lies in all but two facets.
If we denote by $e_{ij}$ the edge that~does~\emph{not} lie in facets $F_i$ and $F_j$, then
\cref{res:edge_polynomial_identity} simplifies to
%
%
\begin{equation}
    \label{eq:adjoint_identitiy_for_simplex}
    \adj_\Delta \|x\|^2 = -\sum_{i<j} \ell_{ij} \adj_{T_{ij}} \<x,u_i\>\<x,u_j\>,
\end{equation}
%
where $\ell_{ij}$ and $T_{ij}$ are the length and projected tangent cone at $e_{ij}$ respectively.\nls
This is a curious identity:
on the right hand side we construct a quadratic form by~summing over the edges of a simplex; but the left hand side shows that this form reduces to a multiple of $\|x\|^2$. 
Alternatively, this can be expressed as a matrix identity:
%
$$\adj_\Delta \Id_d = -\tfrac12\sum_{i<j} \ell_{ij}\adj_{T_{ij}} (u_iu_j\T\!+u_ju_i\T).$$

%

Our next goal is to rewrite \eqref{eq:adjoint_identitiy_for_simplex} more explicitly by substituting in the known expressions for adjoints of simplices and simplicial cones (\cf\ \cref{ex:simplices}).\nls
\iftrue 
For~conve\-nience (and later generalization) we shall assume that $u_0,...,u_d$ form a positive affine basis, \ie\ the determinant in \eqref{eq:adjoint_simplex} is non-negative and the absolute value can be dropped:
%
%
\begin{equation}
    \label{eq:simplicial_adj_pos}
    \adj_\Delta=\det\begin{pmatrix}
        | & & | \\[-0.5ex]
        u_0 & \cdots & u_d \\
        | & & | \\[1ex]
        h_0 & \cdots & h_d
    \end{pmatrix}.
\end{equation}
%
%
%
For the simplicial cone $T_{ij}$ $\subset\RR^d$ we cannot use \eqref{eq:adjoint_simplicial_cone} right away, but need to adjust for the fact that $T_{ij}$ is of dimension $d-1$.
%
%
%
To express $\smash{\adj_{T_{ij}}}$ as a $d\times d$-determinant as in \eqref{eq:adjoint_simplicial_cone}, we have to introduce an additional column that is orthogonal to the~others.
%
For~this we can use the (normalized) edge direction $\vec e_{ij}$, oriented from~$i$~to~$j$.\nls
Conve\-niently, this allows us to also include the edge length into the determinantal~expres\-sion.
More precisely, if $\Delta$ has vertices $v_0,...,v_d$ (with $v_i$ being opposite to the facet with normal $u_i$), and $\hat u_i$ denotes the absence of $u_i$ from the list $u_0,...,u_d$, then we can write
\begin{align}
\ell_{ij}\adj_{T_{ij}}\, 
&\equiv\,
\ell_{ij}\cdot \det\!\begin{pmatrix}\notag
    & | &  & | & & |
    \\
    \bdots & \hat u_i & \ndots & \hat u_j & \edots\!\! & \vec e_{ij}
    \\
    & | &  & | & & |
\end{pmatrix}
\\&=\,
\det\!\begin{pmatrix}\label{eq:Dij_form}
    & | &  & | & & |
    \\
    \bdots & \hat u_i & \ndots & \hat u_j & \edots\!\! & v_j-v_i
    \\
    & | &  & | & & |
\end{pmatrix}
\end{align}
where $\equiv$ denotes identity up to sign.
Tracking the signs in these identities is somewhat cumbersome.
In the following we shall instead use the following more elegant determinantal expression, which is equivalent \emph{including signs}:
\begin{align}
\label{eq:Dij_def}
\ell_{ij}\adj_{T_{ij}} = 
-\det\!\underbrace{\begin{pmatrix}
    | &  & | &  & | & & |
    \\[-0.5ex]
    u_0 & \ndots & v_i & \ndots & v_j & \ndots & u_d
    \\
    | &  & | &  & | & & |
    \\[1ex]
    0 & \ndots & 1 & \ndots & 1 & \ndots & 0
\end{pmatrix}}_{=:D_{ij}}.
\end{align}
Here, $D_{ij}$ is the $(d+1)\times(d+1)$-matrix whose columns are $(u_k,0)\T\!$ for $k\not\in\{i,j\}$, and $(v_j,1)\T\!$ otherwise.
The equivalence of these expressions is shown in \cref{res:lij_edj_Tij}.

\else 
For~conve\-nience we assume a suitable ordering of the normal vectors so that the determinantal expressions for the adjoints are positive. We thne have the expressions:
%
%
\begin{equation*}
    \label{eq:simplicial_adj_pos}
    \adj_\Delta=\det\begin{pmatrix}
        | & & | \\[-0.5ex]
        u_0 & \cdots & u_d \\
        | & & | \\[1ex]
        h_0 & \cdots & h_d
    \end{pmatrix},
    \qquad
    \adj_C=\det\begin{pmatrix}
        | & & | \\[-0.5ex]
        u_1 & \cdots & u_d \\
        | & & | \\[1ex]
    \end{pmatrix}.
\end{equation*}
%
%

The expression for the simplex we can use right away.
For the simplicial cone~$T_{ij}$ $\subset\RR^d$ we need to adjust for the fact that $T_{ij}$ is of dimension $d-1$.
%
%
%
To express $\adj_{T_{ij}}$ as a $d\times d$-determinant, we have to introduce an additional column that is orthogonal to the others.
%
For this we use the (normalized) edge direction~$\vec e_{ij}$,\nls oriented from $i$ to $j$.
Conveniently, this allows us to include the edge length into the determinantal expression as well.
We found the following to be an elegant way to express this:\nls
for $i<j$ let $D_{ij}$ be the negative of the $(d+1)\times(d+1)$-determinant whose columns are $(u_k,0)\T\!$ for $k\not\in\{i,j\}$, and $(v_j,1)\T\!$ otherwise:
\begin{align}
\label{eq:Dij_def}
D_{ij} := 
-\det\!\begin{pmatrix}
    | &  & | &  & | & & |
    \\[-0.5ex]
    u_0 & \ndots & v_i & \ndots & v_j & \ndots & u_d
    \\
    | &  & | &  & | & & |
    \\[1ex]
    0 & \ndots & 1 & \ndots & 1 & \ndots & 0
\end{pmatrix}.
\end{align}
Let $\hat u_i$ denote the absence of $u_i$ from the list $u_0,...,u_d$.
We indeed find
\todo{\msays{I checked for all signs numerically that they give positive expressions. I did not check this analytically. If someone wants to try ....... It should suffice to check the last identity. It is correct, if we ignore the sign.}}
\begin{align}
D_{ij}
&=\notag
(-1)^{i+j}\det\!\begin{pmatrix}
    & | &  & | & & | & |
    \\
    \bdots & \hat u_i & \ndots & \hat u_j & \edots & v_i & v_j
    \\
    & | &  & | & & | & |
    \\[1ex]
    \bdots & 0 & \ndots & 0 & \ndots & 1 & 1
\end{pmatrix}
\\&=\notag
(-1)^{i+j}\det\!\begin{pmatrix}
    & | &  & | & & | & |
    \\
    \bdots & \hat u_i & \ndots & \hat u_j & \edots & v_i & v_j - v_i
    \\
    & | &  & | & & | & |
    \\[1ex]
    \bdots & 0 & \ndots & 0 & \ndots & 1 & 0
\end{pmatrix}
\\&= \label{eq:Dij_form}
(-1)^{i+j+1}\det\!\begin{pmatrix}
    & | &  & | & & |
    \\
    \bdots & \hat u_i & \ndots & \hat u_j & \edots & v_j-v_i
    \\
    & | &  & | & & |
\end{pmatrix}
\\&=\notag
(-1)^{i+j+1}\ell_{ij}\cdot \det\!\begin{pmatrix}
    & | &  & | & & |
    \\
    \bdots & \hat u_i & \ndots & \hat u_j & \edots & \vec e_{ij}
    \\
    & | &  & | & & |
\end{pmatrix}
= 
\ell_{ij} \adj_{T_{ij}} 
\end{align}
The choice of signs guarantees a positive expression throughout, as long as $i<j$.
\fi

Substituting \eqref{eq:simplicial_adj_pos} and \eqref{eq:Dij_def} into \eqref{eq:adjoint_identitiy_for_simplex} we obtain a curious identity for quadratic forms involving determinants:
%
%
%
\begin{theorem}
    \label{res:simplex_qudratic_forms}
    If $\Delta\subset\RR^d$ is a $d$-dimensional simplex with normal vectors $u_i\in\RR^d\setminus\{0\}$, heights $h_i\in\RR$ and vertices $v_i\in\RR^d$, then
    \begin{equation}
        \label{eq:simplex_qudratic_forms}
        \det\begin{pmatrix}
            | & & | \\[-0.5ex]
            u_0 & \ndots & u_d \\
            | & & | \\[1ex]
            h_0 & \ndots & h_d
        \end{pmatrix}
        \|x\|^2 = \sum_{i<j}
            \det\begin{pmatrix}
                | &  & | &  & | & & |
                \\[-0.5ex]
                u_0 & \ndots & v_i & \ndots & v_j & \ndots & u_d
                \\
                | &  & | &  & | & & |
                \\[1ex]
                0 & \ndots & 1 & \ndots & 1 & \ndots & 0
            \end{pmatrix}
            \<x,u_i\>\<x,u_j\>.
    \end{equation}
\end{theorem}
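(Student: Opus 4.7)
The plan is to derive \eqref{eq:simplex_qudratic_forms} as the simplex case of the edge decomposition formula \cref{res:edge_decomposition}, combined with the explicit adjoint formulas for simplices and simplicial cones from \cref{ex:simplices}.

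First I would observe that for a $d$-simplex $\Delta$ the number of facets is $m = d+1$, so the expected adjoint degree $m-d-1$ equals zero; hence $\adj_\Delta$ is a constant. Similarly, the projected tangent cone $T_{ij}$ at the edge $e_{ij}$ is a $(d-1)$-dimensional simplicial cone with exactly $d-1$ facets (those of $\Delta$ containing $e_{ij}$, namely all $F_k$ with $k\notin\{i,j\}$), so $\adj_{T_{ij}}$ too is a constant. In this situation \cref{res:edge_polynomial_identity} collapses, as already indicated in the text, to the quadratic identity \eqref{eq:adjoint_identitiy_for_simplex}:
\[
\adj_\Delta \cdot \|x\|^2 \;=\; -\sum_{i<j} \ell_{ij}\,\adj_{T_{ij}}\,\langle x,u_i\rangle\,\langle x,u_j\rangle.
\]

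Next I would substitute the explicit determinantal expressions. Assuming the normals $u_0,\dots,u_d$ are oriented so that the determinant in \eqref{eq:adjoint_simplex} is non-negative, \eqref{eq:simplicial_adj_pos} identifies $\adj_\Delta$ with the $(d+1)\times(d+1)$ determinant on the left of \eqref{eq:simplex_qudratic_forms}. For each edge I would use the identity $\ell_{ij}\adj_{T_{ij}} = -D_{ij}$ from \eqref{eq:Dij_def}, whose equivalence with the $(d-1)\times(d-1)$ simplicial-cone determinant of \cref{ex:simplices} (and in particular the match of signs) is verified in \cref{sec:appendix_lij_adj_Tij}. Inserting both expressions into the displayed identity, the two minus signs cancel and one lands exactly on \eqref{eq:simplex_qudratic_forms}.

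The main obstacle is the sign bookkeeping needed to promote the natural $(d-1)$-dimensional determinantal expression for $\adj_{T_{ij}}$ into the $(d+1)\times(d+1)$ determinant $D_{ij}$: one must augment the $u_k$ (for $k\notin\{i,j\}$) by the edge direction $\vec e_{ij}$ oriented from $v_i$ to $v_j$, absorb the edge length $\ell_{ij}$ through $v_j-v_i = \ell_{ij}\,\vec e_{ij}$, and finally introduce the extra row of $0$'s and $1$'s to replace two normals by the corresponding vertices while preserving the sign. All remaining steps reduce to formal substitution; the deeper content — that the quadratic form $\sum_{i<j} D_{ij}\langle x,u_i\rangle\langle x,u_j\rangle$ is a scalar multiple of $\|x\|^2$, and moreover that the scalar is exactly $\adj_\Delta$ — is precisely what \cref{res:edge_decomposition} delivers in the simplex case.
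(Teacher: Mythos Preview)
Your proposal is correct and follows essentially the same route as the paper: specialize \cref{res:edge_decomposition} (via \cref{res:edge_polynomial_identity}) to a simplex to obtain \eqref{eq:adjoint_identitiy_for_simplex}, then substitute the determinantal expressions \eqref{eq:simplicial_adj_pos} and \eqref{eq:Dij_def} for $\adj_\Delta$ and $\ell_{ij}\adj_{T_{ij}}$, with the sign tracking handled by \cref{sec:appendix_lij_adj_Tij}. The paper explicitly remarks that this valuation-based route through \cref{res:edge_decomposition} is its only proof of \cref{res:simplex_qudratic_forms}, and that a purely matrix-theoretic proof would be desirable.
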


In \cref{res:simplex_qudratic_forms} we do no longer need that the $u_i$ are normalized.
This is because both sides are linear in $(u_i,h_i)\T\!$. 
We also do not need to assume a specific~order~for the indices to guarantee positive determinants.
This is because~both~sides~flip~signs under a swap of indices.
So far our only proof of \cref{res:simplex_qudratic_forms} relies on the valuation properties of $\Omega_0$ (see \cref{sec:decomposition_proof}).
Due to its elementary statement, a purely matrix-theoretic proof would be desirable.

We also remark on special case $d=2$, that is, $\Delta$ is a triangle.
Here the projected edge tangent cones $T_{ij}$ are 1-dimensional and so $\adj_{T_{ij}} = 1$.
Therefore \cref{res:simplex_qudratic_forms} further simplifies to
\begin{equation}
    \label{eq:trangle_adjoint}
    \adj_\Delta \|x\|^2 = \sum_{i<j} \ell_{ij} \<x,u_i\>\<x,u_j\>.
\end{equation}

Note that even though the entries $h_i$ of $\adj_\Delta$ depend on the translation of $\Delta$, the adjoint itself does not.
The adjoint therefore expresses some geometric property of $\Delta$. 
In dimension $d=2$ we identified the identity
$$\adj_\Delta=\frac{\operatorname{Area}(\Delta)}{\operatorname{Circumradius}(\Delta)}.$$
A proof is given in \cref{sec:appendix_triangle_adjoint}.
We are not aware of an analogously geometric~interpretation in dimensions $d\ge 3$.

\begin{question}
    \label{q:adj_simplex_geometrically}
    What does $\adj_\Delta$ express geometrically if $d\ge 3$?
\end{question}

\subsection{Proof of \cref{res:edge_decomposition}}
\label{sec:decomposition_proof}


As discussed previously, we prove \cref{res:edge_decomposition} by decomposing $P$ into simpler polytopes.
Since the statement is already complicated on simplices (see \cref{res:simplex_qudratic_forms}), we need to choose even simpler pieces. 

An \Def{ortho-simplex} (also known as an \Def{orthoscheme}) is a simplex $\Delta$ (or any simplex isometric to it) whose vertices are of the form
%
\begin{align*}
    v_0&=(0,0,0,...,0),\\
    v_1&=(\ell_1,0,0,...,0),\\
    v_2&=(\ell_1,\ell_2,0,...,0),\\
    v_3&=(\ell_1,\ell_2,\ell_3,...,0),\\
    &\;\;\vdots \\
    v_d&=(\ell_1,\ell_2,\ell_3,...,\ell_d),
\end{align*}
for some positive numbers $\ell_1,...,\ell_d>0$ (see \cref{fig:orthoschemes}).

\begin{figure}[h!]
    \centering
    \includegraphics[width=0.65\textwidth]{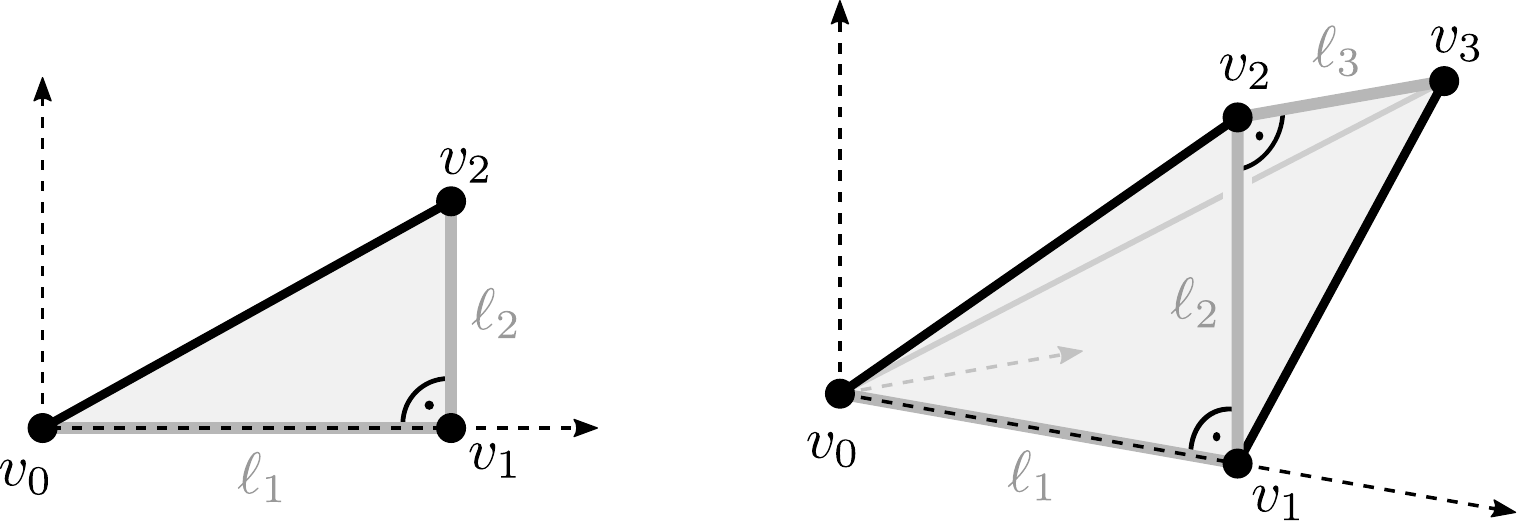}
    \caption{Ortho-simplices of dimension two and three.}
    \label{fig:orthoschemes}
\end{figure}

Crucially, every polytope $P$ has an \Def{ortho-simplex decomposition}, that is, it can be written as a \textit{signed} sum of ortho-simplices.
This is standard, but we give a~quick description of the decomposition.
First, fix a point $x_P\in\RR^d$.
For each face $F\subset P$ let $x_F$ be the orthogonal projection of $x_P$ onto $\aff(F)$.
Then for each flag $\mathcal F=\{P\supset F_{d-1}\supset\cdots\supset F_1\supset F_0\}$ the convex hull of $\{x_{F_i}\mid F_i\in\mathcal F\}$ is an ortho-simplex $\Delta_{\mathcal F}$ (see \cref{fig:orthoschemes_decomposition}).
Let $u_i\in\aff(F_i)$ be the outwards pointing normal vector to $F_{i-1}$ considered as a facet of $F_i$ and let $k(\mathcal F)$ count the number of $i\in\{1,...,d\}$ for which $\<x_{F_{i-1}}-x_{F_i},u_i\><0$. 
One can show that
%
$$\phi(P)=\sum_{\mathcal F} (-1)^{k(\mathcal F)} \phi(\Delta_{\mathcal F}),$$
whenever $\phi$ is a simple valuation, such as $\Omega$ or $\Omega_0$.
The goal here is to prove~\cref{res:simplex_qudratic_forms} for ortho-simplices, which gives us the full statement by the above ortho-simplex decomposition.

\begin{figure}[h!]
    \centering
    \includegraphics[width=0.60\linewidth]{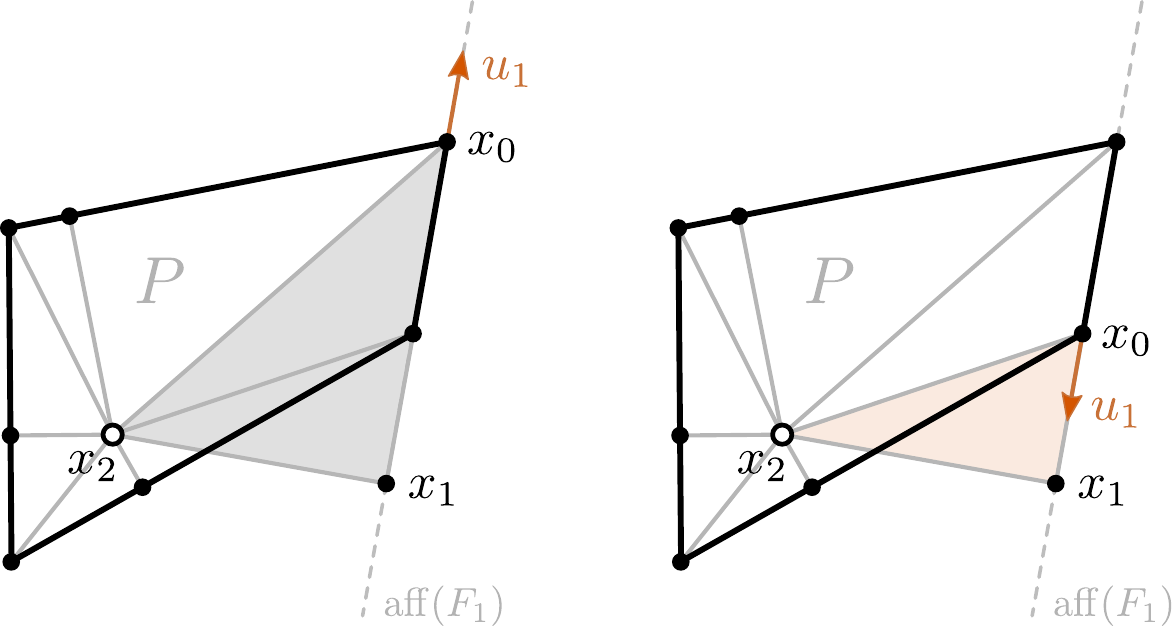}
    \caption{
        An ortho-simplex decomposition of a quadrangle. 
        Left:\nls a~po\-sitively signed ortho-simplex in the decomposition.
        Right: a negatively signed ortho-simplex in the decomposition.
    }
    \label{fig:orthoschemes_decomposition}
\end{figure}

First, one verifies that the (non-normalized) normal vectors of an ortho-simplex $\Delta$ are given by
\begin{align*}
    u_0&=(\ell_0,0,0,...,0,0),\\
    u_1&=(-\ell_2,\ell_1,0,...,0,0),\\
    u_2&=(0,-\ell_3,\ell_2,...,0,0),\\
    u_3&=(0,0,-\ell_4,\ell_3,...,0,0),\\
    &\;\;\vdots \\
    u_{d-1}&=(0,0,0,...,-\ell_d,\ell_{d-1}),\\
    u_d&=(0,0,0,...,0,-\ell_{d+1}),
\end{align*}
where $u_i$ is the normal of facet $F_i$ opposite to vertex $v_i$. We introduced additional parameters $\ell_0,\ell_{d+1}>0$ for notational convenience; one might set $\ell_0=\ell_{d+1}=1$.

The height of each facet over the origin is $0$, except for the height of $F_0$, which is $\ell_0\ell_1$. 
Following \eqref{eq:simplicial_adj_pos} we can express the adjoint of $\Delta$ as
%
%
%
\begin{align*}
\adj_\Delta
    &=
    \det\begin{pmatrix}
        | & \!\!\!\!| & & | \\[-0.5ex]
        u_0 & \!\!\!\!u_1 & \ndots & u_d \\
        | & \!\!\!\!| & & | \\[1ex]
        \ell_0\ell_1 & \!\!\!\!0 & \ndots & 0
    \end{pmatrix}
    =(-1)^{d+1}\ell_0\ell_1\cdot 
    \det\begin{pmatrix}
        | & & | \\[-0.5ex]
        u_1 & \ndots & u_d \\
        | & & | \\[1ex]
    \end{pmatrix}
    \\[1ex]
    &= (-1)^{d+1}\ell_0\ell_1 \cdot
    \det\left(\;\begin{matrix}
        -\ell_2
        \\
        \phantom+\ell_1 & \mathllap-\ell_3
        \\[-1.2ex]
        & \ell_2 & \!\!\!\ddots\!\!\!\!
        \\[-1.2ex]
        & & \!\!\!\ddots\!\!\!\! & -\ell_d
        \\
        & & & \ell_{d-1} & \,\mathllap-\ell_{d+1}
    \end{matrix}\;\right) 
    = \ell_0 \cdots \ell_{d+1}. 
\end{align*}

A likewise elementary but unlike more involved computation yields an expression for $\det(D_{ij})=\ell_{ij}\adj_{T_{ij}}$ as defined in \eqref{eq:Dij_def}:
%
%
for $i<j$ we obtain
    $$\det(D_{ij}) = \frac{\ell_{i+1}^2+\cdots +\ell_j^2}{\ell_i\ell_{i+1}\ell_j \ell_{j+1}} \cdot \ell_0\cdots \ell_{d+1}.$$
%
The details of this computation can be found in \cref{sec:appendix_Dij}.

Lastly, we observe $\<x,u_i\> = \ell_i x_{i+1} - \ell_{i+1} x_i$, where we set $x_0=x_{d+1}=0$. After substituting all of the above expressions into \eqref{eq:adjoint_identitiy_for_simplex} and canceling $\ell_0\cdots \ell_{d+1}$ on both sides, we are left with the following identity that we need to verify:
\begin{align*}
\sum_{k=1}^d x_k^2 
&= -\!\sum_{\substack{i,j=0\\i<j}}^d \frac{\ell_{i+1}^2+\cdots+ \ell_j^2}{\ell_i\ell_{i+1}\ell_j\ell_{j+1}} (\ell_i x_{i+1} - x_{i+1} x_i)(\ell_j x_{j+1} - x_{j+1} x_j)
\\&= -\!\sum_{\substack{i,j=0\\i<j}}^d (\ell_{i+1}^2+\cdots+ \ell_j^2)\Big(\frac{x_{i+1}}{\ell_{i+1}} - \frac{x_{i}}{\ell_{i}}\Big)\Big(\frac{x_{j+1}}{\ell_{j+1}} - \frac{x_{j}}{\ell_{j}}\Big).
\end{align*}
We can slightly simplify by substituting $y_k:=x_k/\ell_k$:
\begin{align*}
\sum_{k=1}^d \ell_k^2 y_k^2 
&= -\!\sum_{\substack{i,j=0\\i<j}}^d (\ell_{i+1}^2+\cdots+ \ell_j^2)(y_{i+1} - y_i)(y_{j+1} - y_j).
\end{align*}
This identity can now be verified by sorting the terms on the right side according~to $\ell_k^2$, and recognizing two telescoping sums:
\begin{align*}
    -&\!\sum_{\substack{i,j=0\\i<j}}^d (\ell_{i+1}^2+\cdots+ \ell_j^2)(y_{i+1} - y_i)(y_{j+1} - y_j)
    \\[-4ex]&\qquad\qquad=
    -\!\sum_{k=1}^d \ell_k^2 \!\!\sum_{\substack{i,j=0\\i<k\le j}}^d\!\! (y_{i+1}-y_i)(y_{j+1}-y_j)
    \\[-1ex]&\qquad\qquad=
    -\!\sum_{k=1}^d \ell_k^2 \!\!\underbrace{\sum_{0\le i<k}\!\!(y_{i+1}-y_i)}_{=\,y_k-y_0\,=\,y_k} \underbrace{\sum_{k\le j\le d}\!\! (y_{j+1}-y_j)}_{=\,y_{d+1}-y_k\,=\,-y_k}
    =\sum_{k=1}^d \ell_k^2 y_k^2.
\end{align*}
This finishes the proof of \eqref{eq:adjoint_identitiy_for_simplex} for ortho-simplices, and hence also the proof~of~\cref{res:edge_decomposition}.

\section{Central inversion}
\label{sec:central_inversion}

In this section we consider the behavior of $\Omega_0$ under central inversion $P\mapsto -P$. 

\begin{lemma}
    \label{res:central_inversion}
    \label{res:partity}
    $\Omega_0(-P) = (-1)^{d+1} \Omega_0(P)$.
\end{lemma}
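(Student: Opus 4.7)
The plan is to reduce the identity to two ingredients that are both already available in the paper: the transformation rule for $\Omega_0$ under invertible linear maps (Corollary \ref{res:Omega0_trafo}), and the fact that $\Omega_0(P;x)$ is a homogeneous rational function of a specific degree in $x$.

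First I would apply Corollary \ref{res:Omega0_trafo} with $S=-\Id$. Since $|\!\det(-\Id)|=1$, this gives
\[
\Omega_0(-P;-x)=\Omega_0(P;x),\qquad\text{equivalently}\qquad \Omega_0(-P;x)=\Omega_0(P;-x).
\]
So the statement reduces to determining how $\Omega_0(P;\free)$ behaves under $x\mapsto -x$.

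Next, reading off the explicit representation \eqref{eq:Omega0_def} of $\Omega_0$, one sees that the numerator $\adj_P(x_0,x)|_{x_0=0}$ is either zero (when $\drop(P)>0$) or the homogeneous degree-$(m-d-1)$ part of $\adj_P(x)$, while the denominator $\prod_F\<x,u_F\>$ is homogeneous of degree $m$. Thus $\Omega_0(P;x)$ is either identically zero or homogeneous of degree $-d-1$ in $x$. In the first case the claim is trivial; in the second case
\[
\Omega_0(P;-x)=(-1)^{-d-1}\Omega_0(P;x)=(-1)^{d+1}\Omega_0(P;x),
\]
which combined with the previous display yields $\Omega_0(-P;x)=(-1)^{d+1}\Omega_0(P;x)$.

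There is no real obstacle: both the transformation rule and the explicit homogeneous form of $\Omega_0$ are in hand, and the argument is essentially two lines once they are invoked.
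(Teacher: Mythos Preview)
Your proposal is correct and follows essentially the same approach as the paper: apply Corollary~\ref{res:Omega0_trafo} with $S=-\Id$ to reduce to $\Omega_0(P;-x)$, handle the trivial case $\Omega_0(P)=0$ separately, and otherwise use that $\Omega_0(P;\free)$ is homogeneous of degree $-d-1$.
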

\begin{proof}
    The proof is analogous to \cref{res:1_homogeneous}.
    The statement is clear if $\Omega_0(P)=0$.
    Otherwise, $\Omega_0(P)$ is a homogeneous rational function of degree $-d-1$.
    Using~\cref{res:Omega0_trafo} we obtain   
    \[
    \Omega_0(-P;x) \,\overset{\mathclap{\smash{\ref{res:Omega0_trafo}}}}=\, \Omega_0(P;-x) = (-1)^{-d-1}\, \Omega_0(P;x).
    \qedhere
    \]
    %
\end{proof}

As previously advertised in \cref{ex:Minkowski_sum}, we can now confirm that indeed ``half~of a centrally symmetric polytope'' has a drop  in odd dimensions:

\begin{corollary}
    \label{res:half_cs_polytope}
    Let $d$ be odd and let $P\subset\RR^d$ be a centrally-symmetric polytope with $\drop(P)>0$.
    Let $H$ be a central hyperplane (\ie\ it is passing through the~symmetry center of $P$) with halfspaces $H_+$ and $H_-$.
    Then $$\drop(P\cap H_+)=\drop(P\cap H_-)>0.$$
\end{corollary}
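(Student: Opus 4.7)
The plan is to combine the valuation property of $\Omega_0$ with its behavior under central inversion (\cref{res:central_inversion}), exploiting that in odd dimension the central inversion acts trivially on $\Omega_0$. First I would reduce to the case where the symmetry center of $P$ lies at the origin, which is permissible because $\Omega_0$ is translation-invariant and the drop is affinely invariant (\cref{res:drop_properties}\,\ref{it:trafo}). After this reduction, $P=-P$ and the central hyperplane $H$ passes through the origin. Writing $P_{\pm}:=P\cap H_{\pm}$, one has $-H_+=H_-$, hence
\[
-P_+ = (-P)\cap(-H_+) = P\cap H_- = P_-.
\]
In particular, $P_-$ is the central inversion of $P_+$, so $\drop(P_+)=\drop(P_-)$ is immediate from affine invariance. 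What remains is the positivity of this common value.

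For this I would apply the simple valuation property of $\Omega_0$ to the decomposition $P=P_+\cupdot P_-$ (their interiors are disjoint, intersecting only in the $(d-1)$-dimensional slice $P\cap H$), giving
\[
\Omega_0(P) \;=\; \Omega_0(P_+) + \Omega_0(P_-).
\]
Since $d$ is odd, $(-1)^{d+1}=1$, so \cref{res:central_inversion} together with $P_-=-P_+$ yields $\Omega_0(P_-)=\Omega_0(-P_+)=\Omega_0(P_+)$. On the other hand, $\drop(P)>0$ implies $\Omega_0(P)=0$ by \cref{res:Omgea0_0_iff_drop_0}. Combining these three identities gives $2\,\Omega_0(P_+)=0$, hence $\Omega_0(P_+)=\Omega_0(P_-)=0$, and then \cref{res:Omgea0_0_iff_drop_0} once more produces $\drop(P_\pm)>0$.

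There is no real obstacle here: the argument is a three-line application of the tools already in place. The one subtlety worth checking carefully is that the \emph{exact} identity $P_-=-P_+$ (not merely up to translation) holds, which is what lets $\Omega_0(P_-)=\Omega_0(-P_+)$ be applied without a translation correction; this is precisely why the initial reduction placing the symmetry center of $P$ on $H$ at the origin is needed. The essential phenomenon behind the result is that in odd dimension the two halves of a centrally symmetric polytope contribute to $\Omega_0$ with the \emph{same} sign, so their sum can only vanish if each vanishes individually; in even dimensions the opposite signs make the analogous conclusion fail.
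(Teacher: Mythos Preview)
Your argument is correct and follows essentially the same route as the paper: reduce to the origin-symmetric case, use $P_-=-P_+$ together with \cref{res:central_inversion} for odd $d$ to get $\Omega_0(P_+)=\Omega_0(P_-)$, then apply the simple valuation property to $P=P_+\cupdot P_-$ and $\Omega_0(P)=0$ to conclude. Your write-up is in fact slightly more careful than the paper's, which tacitly uses the same steps (and even mislabels the decomposition step as an application of Minkowski additivity rather than the valuation property).
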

\begin{proof}
    Set $P_\pm:=P\cap H_\pm$.
    Since $d$ is odd, \cref{res:central_inversion} yields $\Omega_0(P_-)=\Omega_0(-P_+)=\Omega_0(P_+)$. Using \cref{res:Minkowski_additivity} we obtain
    $$0=\Omega_0(P)=\Omega_0(P_+\cupdot P_-) \overset{\mathclap{\smash{\ref{res:Minkowski_additivity}}}}= \Omega_0(P_+) + \Omega_0(P_-) = 2\Omega_0(P_+),$$
    and analogously for $P_-$.
\end{proof}

\begin{figure}[h!]
    \centering
    \includegraphics[width=0.65\linewidth]{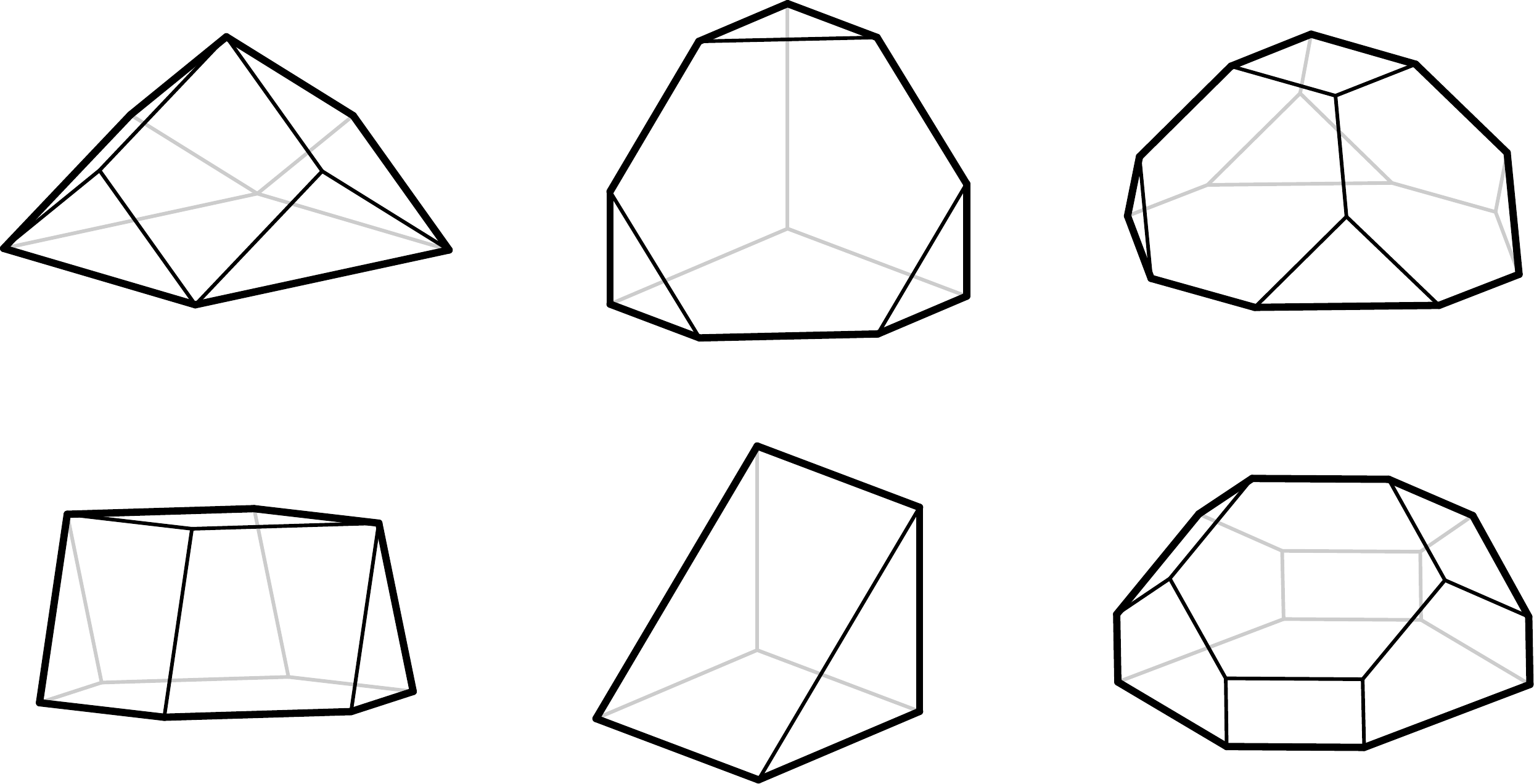}
    \caption{A selection of halves of zonotopes in dimension $d=3$, which have a non-zero drop by \cref{res:half_cs_polytope}. Left: halves of the rhombic~dode\-cahedron. Middle:\nls halves~of~the~cube.\nls Right:\nls halves~of~the~permutahe\-dron.}
    \label{fig:half_zonotopes}
\end{figure}

\begin{corollary}
    \label{res:Z++Z-}
    If $Z=Z_+\cupdot Z_-$ is a central dissection of a 3-dimensional zonotope into two halves (as in \cref{res:half_cs_polytope}), then $Z_++Z_-$ is a zonotope.
\end{corollary}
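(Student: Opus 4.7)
The plan is to show that $Z_+ + Z_-$ has maximal adjoint degree drop $\drop(Z_+ + Z_-) = 2$, and then invoke \cref{res:zonotopes_iff_drop_d_1} to conclude that it is a zonotope.

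First I would normalize so that the center of symmetry of $Z$ lies at the origin. Since $H$ is a central hyperplane, it passes through the origin, so $-H_+ = H_-$. Combined with $-Z = Z$ this yields $Z_- = Z \cap H_- = -(Z \cap H_+) = -Z_+$. In particular, $Z_+ + Z_-$ equals the difference body $Z_+ - Z_+$ and is therefore centrally symmetric.

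Next I would exploit the Minkowski-additivity of $\Omega_0$ (\cref{res:Minkowski_additivity}):
\[
\Omega_0(Z_+ + Z_-) = \Omega_0(Z_+) + \Omega_0(Z_-).
\]
By \cref{res:half_cs_polytope} both halves have positive drop (note that $Z$ itself has maximal drop $2 > 0$ by \cref{res:zonotopes_max_drop}, which is the hypothesis required by that corollary), so by \cref{res:Omgea0_0_iff_drop_0} we have $\Omega_0(Z_\pm) = 0$. Hence $\Omega_0(Z_+ + Z_-) = 0$, which by \cref{res:Omgea0_0_iff_drop_0} means $\drop(Z_+ + Z_-) > 0$.

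Finally, I would combine central symmetry with \cref{res:drop_properties}\,\itm7: in odd dimension $d = 3$ the drop of a centrally symmetric polytope is even, so $\drop(Z_+ + Z_-) \in \{0, 2\}$. Together with the strict inequality above this forces $\drop(Z_+ + Z_-) = 2$, and \cref{res:zonotopes_iff_drop_d_1} then identifies $Z_+ + Z_-$ as a zonotope. The only mild subtlety I anticipate is verifying that the Minkowski sum is genuinely three-dimensional (needed so that the characterization of zonotopes via $\drop = d-1$ applies); this follows immediately because both $Z_\pm$ have non-empty interior in $\RR^3$ and hence so does $Z_+ + Z_-$.
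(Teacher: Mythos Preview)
Your proof is correct and follows essentially the same route as the paper's own argument: show central symmetry of $Z_++Z_-$, use Minkowski additivity together with \cref{res:half_cs_polytope} to get $\Omega_0(Z_++Z_-)=0$, invoke the parity constraint from \cref{res:drop_properties}\,\ref{it:cs} to force $\drop=2$, and conclude via \cref{res:zonotopes_iff_drop_d_1}. You are in fact a bit more explicit than the paper, which simply asserts that $Z_++Z_-$ is centrally symmetric and does not separately address the full-dimensionality point.
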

\begin{proof}
    $Z_++Z_-$ is centrally symmetric, hence $\drop(Z_++Z_-)\in\{0,2\}$ according to \cref{res:drop_properties} \ref{it:cs}. 
    Using \cref{res:half_cs_polytope} we have $\Omega_0(Z_++Z_-)=\Omega_0(Z_+)+\Omega_0(Z_-)=0$.
    Hence $\drop(Z_++Z_-)=2$, and the claim follows from \cref{res:zonotopes_iff_drop_d_1}.    
\end{proof}

There are other ways to arrive at \cref{res:Z++Z-}.
For example, one can show that $Z_++Z_- = Z + Z_0$, where $Z_0=Z\cap H$ is the section of $Z$ with the central dissecting hyperplane $H$.
Since for $d=3$ the section $Z_0$ is a centrally symmetric polygon, it is also a zonotope, and so $Z+Z_0$ must be a zonotope as well.
This also shows that an analogue of \cref{res:Z++Z-} in higher dimensions will fail: in $d\ge 4$ the central~section $Z_0$ of a zonotope needs not be a zonotope, but $Z_0$ will appear as a face of $Z_++Z_-$.



We are now ready to prove a 3-dimensional analogue of \cref{res:characterize_drop_2D} that provides a geometric interpretation for each value of the degree drop:

\begin{theorem}
    \label{res:characterize_drop_3D}
    If $P$ is a 3-dimensional polytope, then
    $$
    \drop(P) = \begin{cases}
        0 & \text{if $P+(-P)$ is \ul{not} a zonotope,}
            \\
        1 & \text{if $P+(-P)$ is a zonotope, but $P$ itself is \ul{not},}
            \\
        2 & \text{if $P$ is a zonotope.}
    \end{cases}.
    $$
\end{theorem}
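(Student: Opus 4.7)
The plan is to establish the three-case characterization by combining the classification of zonotopes as the maximal-drop polytopes (\cref{res:zonotopes_iff_drop_d_1}) with the parity constraint on centrally symmetric polytopes and the interaction of $\Omega_0$ with Minkowski sums and central inversion. Since $\drop(P) \in \{0,1,2\}$ for every $3$-polytope (by \cref{res:drop_properties}\ref{it:max_drop}), it suffices to characterize when $\drop(P) \geq 1$ and when $\drop(P) = 2$ separately.

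The case $\drop(P) = 2$ is immediate: by \cref{res:zonotopes_iff_drop_d_1}, maximal drop is equivalent to $P$ being a zonotope. The heart of the proof is therefore to show that $\drop(P)\geq 1$ is equivalent to $P+(-P)$ being a zonotope. For one direction, combine Minkowski additivity of $\Omega_0$ (\cref{res:Minkowski_additivity}) with the identity $\Omega_0(-P)=\Omega_0(P)$ valid in odd dimension (\cref{res:central_inversion} applied with $d=3$) to obtain
$$\Omega_0(P+(-P)) = \Omega_0(P)+\Omega_0(-P) = 2\,\Omega_0(P).$$
Using \cref{res:Omgea0_0_iff_drop_0} on both sides, this shows $\drop(P) \geq 1$ if and only if $\drop(P+(-P)) \geq 1$.

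Now $P+(-P)$ is centrally symmetric, and \cref{res:drop_properties}\ref{it:cs} forces its drop to be even, hence in $\{0,2\}$. Therefore $\drop(P+(-P)) \geq 1$ is equivalent to $\drop(P+(-P)) = 2$, which by \cref{res:zonotopes_iff_drop_d_1} means exactly that $P+(-P)$ is a zonotope. Assembling the pieces: $\drop(P) = 0$ exactly when $P+(-P)$ is not a zonotope; $\drop(P) = 2$ exactly when $P$ itself is a zonotope; and $\drop(P) = 1$ is the remaining case in which $P+(-P)$ is a zonotope but $P$ is not.

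I do not expect any genuine obstacle here: the theorem is essentially a bookkeeping assembly of results already in hand. The only point meriting care is to invoke the parity statement in the correct direction -- central symmetry of $P+(-P)$ (rather than $P$) is what forces the drop to avoid the value $1$, and this is precisely what allows the argument $\Omega_0(P+(-P))=0 \Rightarrow \drop(P+(-P))=2$ to go through.
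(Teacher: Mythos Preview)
Your proof is correct and essentially identical to the paper's own argument: both compute $\Omega_0(P+(-P))=2\,\Omega_0(P)$ via Minkowski additivity and \cref{res:central_inversion}, use the parity constraint on the centrally symmetric polytope $P+(-P)$ to force its drop into $\{0,2\}$, and then invoke \cref{res:zonotopes_iff_drop_d_1} for both the $\drop=2$ case and the characterization of $P+(-P)$ being a zonotope.
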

Note that $P+(-P)$ is a zonotope precisely if for each facet $F\subset P$ and opposite face $F'\subset P$ (\ie\ $F$ and $F'$ are touched by parallel hyperplanes), $F+(-F')$ is itself a zonotope. For $d=3$ the latter means checking whether $F+(-F')$ is centrally symmetric, which is easy to do. 
Hence, \cref{res:characterize_drop_3D} provides a practical geometric way to determine the drop in dimension three.
%
\begin{proof}[Proof of \cref{res:characterize_drop_3D}]
    Using Minkowski additivity and central inversion, we have
    $$
    \Omega(P+(-P)) \,\overset{\mathclap{\smash{\ref{res:Minkowski_additivity}}}}=\, 
    \Omega(P) + \Omega(-P) \,\overset{\mathclap{\smash{\ref{res:central_inversion}}}}=\, 
    2\Omega(P).
    $$
    That is, $\drop(P+(-P))=0$ if and only if $\drop(P)=0$.
    Note also that $P+(-P)$ is centrally symmetric, hence by \cref{res:drop_properties} \ref{it:cs} its drop is $0$ or $2$.
    So 
    \begin{align*}
        \drop(P)=0
        \;&\Longleftrightarrow\;
        \drop(P+(-P))=0
        \\\;&\Longleftrightarrow\;
        \drop(P+(-P))\not=2 
        \;\overset{\smash{\ref{res:zonotopes_iff_drop_d_1}}}{\Longleftrightarrow}\;
        \text{$P+(-P)$ is not a zonotope}.
    \end{align*}
    The equivalence ``$\drop(P)=2$ $\Longleftrightarrow$ $P$ is a zonotope'' is precisely \cref{res:zonotopes_iff_drop_d_1}.
    The remaining case $\drop(P)=1$ is necessarily capturing what remains, that is, that $P$ is not zonotope, while $P+(-P)$ is a zonotope.
\end{proof}




\section{Valuations for higher degree drops}
\label{sec:higher_valuations}

Over the course of the last sections we established the reduced canonical~form $\Omega_0$ as a helpful tool in the study of degree drops. 
Its major drawback is that~it~cannot quantify this drop. 
The goal of this section is to introduce and investigate a new family of valuations $\Omega_s,s\ge 0$, where $\Omega_s$ can testify a degree drop of $s$ or more. 

From \eqref{eq:Omegahom} we see that the degree drop of $P$ is the order of vanishing of $\Omega(P;x_0,x)$ at a generic point of the form $(0,x)$. Indeed, since the denominator of $\Omega(P;x_0,x)$ is not divisible by $x_0$ we may expand $\Omega(P;x_0,x)$ as a Taylor series at $(0,x)$: 
\begin{equation}
\label{eq:omegasdef}
\Omega(P;x_0,x) = \sum_{s\geq 0} \Omega_s(P;x)x_0^s,
\end{equation}
which shall serve as a definition of $\Omega_s$ as a function mapping convex polytopes~to $\RR(x)$.
This notation is consistent with our previous definition of $\Omega_0$ as $\Omega(P;x_0,x)$ evaluated at $x_0=0$. 
As it turns out, all $\Omega_s$ together completely describe the degree drop of $P$. We obtain the following generalization of \cref{res:Omgea0_0_iff_drop_0}:

\begin{theorem}
    \label{prop:higher_drops}
    \label{res:higher_drops}
    $\Omega_s(P)=0$ for all $s\le s_0$ if and only if $\drop(P)> s_0$.
\end{theorem}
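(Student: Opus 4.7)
The plan is to leverage the fact, already recorded after \eqref{eq:Omegahom}, that $\drop(P)$ equals the highest power of $x_0$ dividing the homogenized adjoint $\adj_P(x_0,x)$. Once this is in hand, the theorem reduces to reading off the order of vanishing of $\Omega(P;x_0,x)$ at $x_0=0$, and the statement is essentially a bookkeeping exercise.

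First I would factor
$$\adj_P(x_0,x) = x_0^{\drop(P)}\, \tilde p(x_0,x),$$
where $\tilde p\in\RR[x_0,x]$ satisfies $\tilde p(0,x)\not\equiv 0$, by the aforementioned maximality property of $\drop(P)$. Substituting into \eqref{eq:Omegahom} yields
$$\Omega(P;x_0,x) = x_0^{\drop(P)} \cdot \frac{\tilde p(x_0,x)}{\prod_F(h_F x_0 - \langle u_F,x\rangle)}.$$
Next, I would observe that the denominator on the right is a unit in the formal power series ring $\RR(x)[[x_0]]$: setting $x_0=0$ gives $(-1)^m \prod_F\langle u_F,x\rangle$, which is a nonzero element of $\RR(x)$ since each $u_F$ is a nonzero vector. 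Consequently, the second factor expands as a Taylor series in $x_0$ with coefficients in $\RR(x)$, and its constant term
$$(-1)^m\,\frac{\tilde p(0,x)}{\prod_F\langle u_F,x\rangle}$$
is a nonzero rational function in $x$, because $\tilde p(0,x)\not\equiv 0$.

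Comparing with the definition \eqref{eq:omegasdef}, this identifies the Taylor expansion of $\Omega(P;x_0,x)$ at $x_0=0$ precisely: we have $\Omega_s(P)=0$ for all $s<\drop(P)$, while $\Omega_{\drop(P)}(P)\neq 0$. The stated equivalence then follows immediately, since ``$\Omega_s(P)=0$ for all $s\le s_0$'' says exactly that the order of vanishing exceeds $s_0$, i.e.\ $\drop(P)>s_0$.

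I do not anticipate a real obstacle here: the substantive content was already packaged in interpreting $\drop(P)$ as the $x_0$-adic valuation of $\adj_P(x_0,x)$, and the only mild point of care is justifying that \eqref{eq:omegasdef} is well-defined as a power series with coefficients in $\RR(x)$, which is precisely the unit property of the denominator above. As a small bonus, the argument actually shows the slightly sharper statement that $\Omega_{\drop(P)}(P)\neq 0$, so the $\Omega_s$ together not only detect but exactly pinpoint the adjoint degree drop.
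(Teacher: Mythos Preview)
Your proposal is correct and follows essentially the same approach as the paper: both recognize that the denominator $\prod_F(h_Fx_0-\langle u_F,x\rangle)$ is nonzero at $x_0=0$, so the $x_0$-adic valuation of $\Omega(P;x_0,x)$ equals that of $\adj_P(x_0,x)$, which is $\drop(P)$. Your presentation via units in $\RR(x)[[x_0]]$ is a bit cleaner than the paper's explicit induction on coefficients, and your observation that $\Omega_{\drop(P)}(P)\neq 0$ is indeed a slight sharpening.
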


\begin{proof}
    We assume $s_0\ge0$. 
    We can write the homogenized adjoint as a polynomial~in $x_0$ with coefficients $a_k\in \RR[x]$. Then
    \begin{align}
        \label{eq:powerseries_adjoint}
        \begin{split}
            \sum_k a_k(x) x_0^k
            =\adj_P(x_0,x)
            &\,=\;\prod_F (h_Fx_0 - \langle x,u_F\rangle) \cdot \Omega(P;x_0,x)
            \\&\,\overset{\mathclap{\smash{\text{\eqref{eq:omegasdef}}}}}=\;\prod_F (h_Fx_0 - \langle x,u_F\rangle) \cdot \sum_{s\geq 0} \Omega_s(P;x)x_0^s  
        \end{split}
    \end{align}
    The proof now proceeds by comparing coefficients on both sides.
    
    Suppose first that $\drop(P) > s_0$. This is, by definition, equivalent to $a_k\equiv 0$, for all $0\leq k \le s_0$. 
    Hence, the corresponding coefficients of $x_0^s$ on the right hand side of \eqref{eq:powerseries_adjoint} must vanish as well. 
    For $s=0$, this coefficient is $\prod_F (-\langle x,u_F\rangle) \Omega_0(P)$,\nls that is, $\Omega_0(P) = 0$. 
    Proceeding by induction on $s$, we see that for each $s \le s_0$ the~coefficient is $\prod_F (-\langle x,u_F\rangle) \Omega_s(P)$.
    Thus, $\Omega_s(P) = 0$ whenever $0\leq s \le s_0$.

    Conversely, suppose that $\Omega_s(P)=0$ for all $0\leq s < s_0$.
    Then performing~the~previous argument backwards, the corresponding coefficients of $x_0^s$ on the right hand side of \eqref{eq:powerseries_adjoint} vanish, and we conclude $a_k \equiv 0$ for all $0\leq k \le s_0$.
    %
    %
    %
\end{proof}

Next, we observe that $\Omega_s$ is a valuation:
in view of \eqref{eq:omegasdef} we have \[ \Omega_s(P) = \frac{1}{s!}\left[  \frac{\partial^s}{\partial x_0^s}\Omega(P;x_0,x) \right]_{x_0=0}, \]
so $\Omega_s$ arises as the composition of the valuation $P\mapsto \Omega(P;x_0,x)$ with the linear map $f\mapsto \tfrac1{s!}[\tfrac{\partial^s}{\partial x_0^s} f]_{x_0=0}$ and thus inherits the valuation property. For the same reason, $\Omega_s$ is a \emph{simple} valuation, \ie\ it vanishes if $P$ has empty interior. 
It moreover exhibits the same linear equivariance as $\Omega_0$ (\cf\ \cref{res:Omega0_trafo}):

\begin{lemma}
    \label{lemma:equivariance_omegas}
    Let $P\subset\RR^d$ be a convex polytope and let $S\in\GL(\RR^d)$. We have
    \[
    \Omega_s(SP,Sx) = |\det(P)|^{-1}\,\Omega_s(P;x).
    \]
\end{lemma}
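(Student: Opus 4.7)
The plan is to lift the equivariance already established for $\Omega$ (and hence for $\Omega_0$) to all coefficients of the Taylor expansion in $x_0$ that defines $\Omega_s$. Concretely, the identity \eqref{eq:omegasdef} says
\[
\Omega(P;x_0,x) = \sum_{s\ge 0} \Omega_s(P;x)\, x_0^s,
\]
so it suffices to establish the analogous equivariance for the full homogenized canonical form and then read off coefficients.

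First I would recall from the proof of \cref{res:Omega0_trafo} that we actually proved the stronger statement
\[
\Omega(SP;x_0,Sx) \;=\; |\!\det(S)|^{-1}\,\Omega(P;x_0,x),
\]
valid as an identity of rational functions in $(x_0,x)$ (before specializing to $x_0=0$). This is just the computation in \cref{res:Omega0_trafo} carried out without setting $x_0=0$ at the end, and it follows directly from \cref{res:Omega_trafo} together with the scaling rule $\Omega(P;x_0,x)=x_0^{-d-1}\Omega(P;x/x_0)$.

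Next I would expand both sides in $x_0$. On the left,
\[
\Omega(SP;x_0,Sx) = \sum_{s\ge 0} \Omega_s(SP;Sx)\, x_0^s,
\]
by \eqref{eq:omegasdef} applied to $SP$ and evaluated at the point $Sx$. On the right,
\[
|\!\det(S)|^{-1}\,\Omega(P;x_0,x) = \sum_{s\ge 0} |\!\det(S)|^{-1}\,\Omega_s(P;x)\, x_0^s.
\]
Since both are formal power series in $x_0$ with coefficients in $\RR(x)$, comparing the coefficient of $x_0^s$ yields the claimed identity $\Omega_s(SP;Sx)=|\!\det(S)|^{-1}\Omega_s(P;x)$.

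No serious obstacle is expected: the only subtle point is to make sure that the identity of \cref{res:Omega_trafo}, originally stated for $x\in\Int(P)$, really holds as an identity of rational functions, so that the substitution $x\mapsto x/x_0$ and the subsequent Taylor expansion at $x_0=0$ are legitimate. This is automatic because $\Omega(P;\cdot)$ is by definition the unique rational function agreeing with dual volume on $\Int(P)$, and $\Omega(P;x_0,x)$ has denominator $\prod_F(h_Fx_0-\langle u_F,x\rangle)$ not divisible by $x_0$, so its Taylor series at $x_0=0$ converges in the formal sense and its coefficients are uniquely determined.
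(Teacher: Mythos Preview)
Your proof is correct and follows exactly the approach sketched in the paper: establish the equivariance for the full homogenized canonical form (as in the proof of \cref{res:Omega0_trafo} before setting $x_0=0$), then expand at $x_0=0$ and compare coefficients of $x_0^s$.
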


\begin{proof}
    The proof is analogous to \cref{res:Omega0_trafo}, except that in the last step we do not set $x_0=0$, but expand both sides at $x_0=0$ and compare coefficients.
\end{proof}

\begin{lemma}
    \label{lemma:hom_rational_functions}
    For any convex polytope $P\subset\RR^d$, the rational function $x\mapsto \Omega_s(P;x)$ is homogeneous of degree $-(d+1+s)$.
\end{lemma}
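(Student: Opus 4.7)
The plan is to exploit the overall homogeneity of the homogenized canonical form $\Omega(P;x_0,x)$ in the pair $(x_0,x)$, and then read off the homogeneity of each Taylor coefficient $\Omega_s(P;x)$ by matching powers of $x_0$.

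First I would establish that $\Omega(P;x_0,x)$ is homogeneous of degree $-(d+1)$ jointly in $(x_0,x)$. This is immediate from either of the two descriptions given in Section 2: algebraically, the numerator $\adj_P(x_0,x)$ is homogeneous of degree $m-d-1$ and the denominator $\prod_F(h_Fx_0-\langle u_F,x\rangle)$ is homogeneous of degree $m$, yielding a net degree of $-(d+1)$; geometrically, $\Omega(P;x_0,x)=\Omega(P^{\hom};(x_0,x))$ is the canonical form of a $(d{+}1)$-dimensional cone, which is homogeneous of degree $-(d+1)$ in its argument (as one sees from \cref{ex:simplices} on simplicial cones plus the valuation property applied to a triangulation of $P^{\hom}$).

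Next I would substitute $(x_0,x)\mapsto(\lambda x_0,\lambda x)$ with $\lambda>0$ into the defining Taylor expansion \eqref{eq:omegasdef}. On the one hand,
\[
\Omega(P;\lambda x_0,\lambda x)=\lambda^{-(d+1)}\Omega(P;x_0,x)=\sum_{s\ge 0}\lambda^{-(d+1)}\Omega_s(P;x)\,x_0^s.
\]
On the other hand, expanding at $\lambda x_0$ gives
\[
\Omega(P;\lambda x_0,\lambda x)=\sum_{s\ge 0}\Omega_s(P;\lambda x)\,(\lambda x_0)^s=\sum_{s\ge 0}\lambda^s\Omega_s(P;\lambda x)\,x_0^s.
\]
Comparing the coefficients of $x_0^s$ (which is legitimate because both series are Taylor expansions in $x_0$ of a rational function whose denominator is not divisible by $x_0$) yields $\lambda^s\Omega_s(P;\lambda x)=\lambda^{-(d+1)}\Omega_s(P;x)$, that is, $\Omega_s(P;\lambda x)=\lambda^{-(d+1+s)}\Omega_s(P;x)$.

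The argument is essentially bookkeeping, so no step should be a serious obstacle; the only thing to be careful about is justifying the coefficient comparison, which is fine since $\Omega(P;x_0,x)$ is regular at $x_0=0$ (its denominator $\prod_F(h_Fx_0-\langle u_F,x\rangle)$ evaluates generically to a nonzero $\prod_F(-\langle u_F,x\rangle)$ at $x_0=0$) and hence admits a unique power series expansion in $x_0$ with coefficients in $\RR(x)$.
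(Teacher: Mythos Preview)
Your proof is correct and follows essentially the same approach as the paper: both exploit the joint $-(d{+}1)$-homogeneity of $\Omega(P;x_0,x)$ in $(x_0,x)$ and then compare coefficients of $x_0^s$ in the Taylor expansion. The only cosmetic difference is that you scale $(x_0,x)\mapsto(\lambda x_0,\lambda x)$ directly, whereas the paper scales only $x\mapsto\lambda x$ and recovers the same relation via $\Omega(P^{\hom};(x_0,\lambda x))=\lambda^{-d-1}\Omega(P^{\hom};(\lambda^{-1}x_0,x))$ using \cref{res:Omega_trafo} and the cone invariance $\lambda^{-1}P^{\hom}=P^{\hom}$; the coefficient comparison is then identical.
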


\begin{proof}
    Using the geometric definition of $\Omega(P;x_0,x)$ from \eqref{eq:Omegahom_geom} together with the linear equivariance  \cref{res:Omega_trafo}, we find
    \[\begin{split}
        \smash{\sum_{s\geq 0}} \Omega_s(P;\lambda x) x_0^s &= \Omega(P^{\hom} ; (x_0,\lambda x)) = \lambda^{-d-1}\,\Omega(\lambda^{-1} P^{\hom}; (\lambda^{-1}x_0,x)) \\ 
        &=  \lambda^{-d-1} \Omega(P^{\hom} ; (\lambda^{-1}x_0,x)) = \lambda^{-d-1} \smash{\sum_{s\geq 0}} \Omega_s(P;x) \lambda^{-s} x_0^s.
    \end{split}\]
    Comparing coefficients concludes the proof.
\end{proof}

This provides a generalization of \cref{res:1_homogeneous}:

\begin{corollary}
\label{thm:omegas_homogogeneity}
    $\Omega_s$ is $(s+1)$-homogeneous (in the sense of \cref{sec:homogeneous}).
\end{corollary}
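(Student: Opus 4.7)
The plan is to deduce $(s+1)$-homogeneity by combining the two previous lemmas, in direct analogy with the proof of \cref{res:1_homogeneous}. We want to show that $\Omega_s(\lambda P; x) = \lambda^{s+1}\,\Omega_s(P; x)$ for every $\lambda > 0$ and every convex polytope $P$.

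First I would apply \cref{lemma:equivariance_omegas} with the scalar transformation $S = \lambda\,\Id \in \GL(\RR^d)$. Noting that $|\!\det(\lambda\,\Id)|^{-1} = \lambda^{-d}$, this yields
\[
\Omega_s(\lambda P;\lambda x) = \lambda^{-d}\,\Omega_s(P;x),
\]
or equivalently, after the substitution $x \mapsto x/\lambda$,
\[
\Omega_s(\lambda P; x) = \lambda^{-d}\,\Omega_s(P; x/\lambda).
\]

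Next I would invoke \cref{lemma:hom_rational_functions}, which states that $x \mapsto \Omega_s(P;x)$ is a homogeneous rational function of degree $-(d+1+s)$. This gives
\[
\Omega_s(P; x/\lambda) = \lambda^{d+1+s}\,\Omega_s(P; x).
\]
Substituting this into the previous display yields
\[
\Omega_s(\lambda P;x) = \lambda^{-d}\cdot\lambda^{d+1+s}\,\Omega_s(P;x) = \lambda^{s+1}\,\Omega_s(P;x),
\]
which is the desired $(s+1)$-homogeneity.

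There is essentially no obstacle here: both ingredients have already been established, and the argument is a direct generalization of the $s=0$ case treated in \cref{res:1_homogeneous}. The only subtlety worth a brief remark is that, unlike $\Omega_0$, the valuations $\Omega_s$ for $s \ge 1$ are not translation-invariant, so "homogeneity" is understood here purely as the scaling law $\Omega_s(\lambda P) = \lambda^{s+1}\,\Omega_s(P)$ as rational functions of $x$, rather than in the stricter valuation-theoretic sense of \cref{sec:homogeneous}. This is why the statement is phrased with an explicit pointer back to that section.
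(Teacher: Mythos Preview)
Your proof is correct and follows essentially the same approach as the paper: both combine \cref{lemma:equivariance_omegas} (with $S=\lambda\,\Id$) and \cref{lemma:hom_rational_functions} to obtain $\Omega_s(\lambda P;x)=\lambda^{-d}\lambda^{d+1+s}\,\Omega_s(P;x)$. Your closing remark about the absence of translation-invariance is a reasonable clarification, though the paper does not include it.
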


\begin{proof}
    Combining \cref{lemma:equivariance_omegas,lemma:hom_rational_functions} we obtain
    \[
        \Omega_s(\lambda P;x) = \lambda^{-d}\,\Omega_s(P;\lambda^{-1}x) = \lambda^{-d}\lambda^{d+1+s}\,\Omega_s(P;x). \qedhere
    \]
\end{proof}

A shortcoming of the $\Omega_s,s>0$ as compared to $\Omega_0$ is that they are not translation-invariant.
Nevertheless some statements about their behavior under translation~can be made.


\begin{theorem}
\label{thm:transl_polynomial}
    For any convex polytope $P\subset \RR^d$, the function
    \[
    \RR^d \to \RR(x),\quad t\mapsto \Omega_s(P+t;x)
    \]
    is a polynomial of degree at most $s$ with coefficients in $\RR(x)$.
\end{theorem}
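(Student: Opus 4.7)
The plan is to work on the level of the homogenized canonical form, establish an explicit translation identity there, and then extract the polynomial behavior in $t$ by comparing Taylor coefficients in $x_0$.

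First, I would prove the translation identity
$$\Omega(P+t;x_0,x) = \Omega(P;\,x_0,\,x-x_0 t),$$
which follows by combining the algebraic definition $\Omega(P;x_0,x) = x_0^{-d-1}\Omega(P;x/x_0)$ with the translation formula $\Omega(P+t;y) = \Omega(P;y-t)$ obtained from \cref{res:Omega_trafo}. Substituting into \eqref{eq:Omegahom} turns this identity into
$$
\Omega(P+t;x_0,x) = \frac{\adj_P(x_0,\,x-x_0 t)}{\prod_F \bigl((h_F+\langle u_F,t\rangle)\,x_0 - \langle u_F,x\rangle\bigr)} \;=:\; \frac{N(x_0,x,t)}{D(x_0,x,t)},
$$
a ratio of polynomials in the joint variables $(x_0,x,t)$.

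The core of the argument is a bookkeeping of degrees in $t$. I would expand $N = \sum_s \tilde c_s(x,t)\,x_0^s$ and $D = \sum_k c_k(x,t)\,x_0^k$ and verify three facts. First, $c_0(x) = \prod_F(-\langle u_F,x\rangle)$ does not depend on $t$ and is nonzero in $\RR(x)$. Second, since each factor of $D$ is linear in $t$ (with $t$ only in front of $x_0$), expanding the product exhibits $c_k(x,t)$ as a sum of products of $k$ such linear-in-$t$ factors, so $\deg_t c_k = k$. Third, writing $\adj_P(x_0,y) = \sum_r a_r(y)\,x_0^r$ with $a_r$ homogeneous of degree $m-d-1-r$ in $y$, the Taylor identity $a_r(x-x_0 t) = \sum_k \tfrac{(-x_0)^k}{k!}(t\cdot\nabla)^k a_r(x)$ gives
$$
\tilde c_s(x,t) = \sum_{r+k=s} \frac{(-1)^k}{k!}\,(t\cdot\nabla)^k a_r(x),
$$
which is a polynomial in $t$ of degree at most $s$, the maximum being attained only when $r=0$.

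Finally I would run induction on $s$. Multiplying out $N = D\cdot\sum_s \Omega_s(P+t;x)\,x_0^s$ and comparing coefficients of $x_0^s$ yields the recursion
$$
c_0(x)\,\Omega_s(P+t;x) = \tilde c_s(x,t) - \sum_{k=1}^{s} c_k(x,t)\,\Omega_{s-k}(P+t;x).
$$
The base case $s=0$ is exactly \cref{res:translation_invariant}. For the induction step, each summand $c_k(x,t)\,\Omega_{s-k}(P+t;x)$ has degree at most $k+(s-k)=s$ in $t$ by the inductive hypothesis, so the right-hand side is polynomial in $t$ of degree at most $s$; dividing by the $t$-independent, nonzero $c_0(x)\in\RR(x)$ concludes the step. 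The place where I expect to need the most care is the bound $\deg_t \tilde c_s \le s$, since it essentially encodes the homogeneity of $\adj_P(x_0,y)$ built into the homogenized canonical form; everything else is straightforward coefficient extraction.
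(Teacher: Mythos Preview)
Your argument is correct and takes a genuinely different route from the paper's proof. The paper reduces to simplices via the valuation property and then expands $\Omega(\Delta;x_0,x)$ as a geometric series in $x_0$; this yields an explicit closed formula
\[
\Omega_s(\Delta+t;x)=\Omega_0(\Delta;x)\!\!\!\sum_{s_0+\cdots+s_d=s}\,\prod_i\left(\frac{h_i+\langle t,u_i\rangle}{\langle x,u_i\rangle}\right)^{s_i},
\]
from which the polynomial dependence of degree $\le s$ in $t$ is read off by inspection. Your proof instead stays with a general $P$, uses the translation identity $\Omega(P+t;x_0,x)=\Omega(P;x_0,x-x_0t)$ for the homogenized form, and then runs an induction on $s$ after tracking $t$-degrees in the numerator and denominator expansions. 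The paper's approach has the advantage of producing an explicit formula on simplices; yours has the advantage of avoiding triangulation altogether and isolating exactly which structural fact drives the degree bound (the homogeneity of $\adj_P(x_0,y)$, which forces $\deg_t\tilde c_s\le s$). One cosmetic point: you write $\deg_t c_k=k$, but only $\deg_t c_k\le k$ is guaranteed and is all you need.
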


\begin{proof}
    It suffices to show the statement for a simplex $\Delta = \{ x\in\RR^d \colon \langle x,u_i\rangle \leq h_i\}$, $u_i\in\RR^d\setminus\{0\}$, $h_i\in\RR$, and then use the valuation property. In this case, the adjoint of $\Delta$ is a constant. We have
    \[\begin{split}
        \Omega(\Delta;x_0,x) 
        &= \frac{\adj_\Delta}{\prod_i (h_i x_0 - \langle x,u_i\rangle)} 
        = \frac{\adj_\Delta}{\prod_i (-\langle x,u_i\rangle)}\cdot \frac{1}{\prod_i \big(1- \tfrac{h_i}{\langle x,u_i\rangle}x_0\big)}
        \\&=\Omega_0(\Delta;x) \prod_{i=0}^d \sum_{s\geq 0}\left(\frac{h_i}{\langle x,u_i\rangle}\right)^s\!x_0^s
        \\&= \Omega_0(\Delta;x) \sum_{s\geq 0}\left[ \sum_{s_0+\cdots +s_d=s} \,\prod_{i=0}^d \left(\frac{h_i}{\langle x,u_i\rangle}\right)^{s_i} \right]\! x_0^s.
    \end{split}\]
    Comparing coefficients yields,
    \[\Omega_s(\Delta;x) = \Omega_0(\Delta;x) \!\!\!\!\!\!\! \sum_{s_0+\cdots +s_d=s} \,\prod_{i=0}^d \left(\frac{h_i}{\langle x,u_i\rangle}\right)^{s_i}\!\!\!.\]
    Since $\Delta+t = \{ x\in\RR^d \colon \langle x,u_i\rangle \leq h_i + \langle u_i,t\rangle\}$ we obtain
    \[
        \Omega_s(\Delta+t;x) = \!\!\!\!\!\!\!\sum_{s_0+\cdots +s_d=s} \,\prod_{i=0}^d \left(\frac{h_i + \langle t,u_i\rangle}{\langle x,u_i\rangle}\right)^{s_i}
    \]
    which is a polynomial in $t$.
\end{proof}

%
Valuations that exhibit a polynomial behavior under translation have been studied in great detail by Khovanski\u{i} and Pukhlikov \cite{khovanskiipukhlikov}, see also \cite{mcmullen_euler}. They appear~naturally in the context of tensor valuations \cite{alesker, bergjochemkosilverstein, kiderlen, ludwigsilverstein}.

Finally, we show that $\Omega_s$ exhibits translation-invariance at least on polytopes of a sufficiently high degree drop:

\begin{theorem}
    \label{thm:omegas_transl_inv}
    \label{res:omegas_transl_inv}
    If $\drop(P)\ge s$, then $\Omega_s(P+t)=\Omega_s(P)$ for all $t\in\RR^d$.
\end{theorem}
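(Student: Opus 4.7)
The plan is to establish an explicit transformation law for the homogenized canonical form $\Omega(P;x_0,x)$ under translation, match coefficients of powers of $x_0$ against the expansion \eqref{eq:omegasdef}, and then use the vanishing of $\Omega_0(P),\dots,\Omega_{s-1}(P)$ provided by \cref{res:higher_drops} to kill off every term except $\Omega_s(P)$.

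The first step is to prove the identity
\[
\Omega(P+t;x_0,x) = \Omega(P;x_0, x - x_0 t).
\]
I would establish this geometrically via \eqref{eq:Omegahom_geom}: the homogenization $(P+t)^{\hom}$ is the image of $P^{\hom}$ under the unimodular shear $S\colon (y_0,y)\mapsto (y_0,\, y + y_0 t)$, since $S(\lambda, \lambda p) = (\lambda, \lambda(p+t))$ for every $\lambda\ge 0$ and $p\in P$. Applying \cref{res:Omega_trafo} to $S$ (which satisfies $\det S = 1$) and substituting $(y_0, y + y_0 t) = (x_0,x)$ yields the claimed transformation law.

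The second step is to expand both sides as formal power series in $x_0$ at $x_0 = 0$. The left side is $\sum_{s\ge 0}\Omega_s(P+t;x)\,x_0^s$ by definition. On the right, each $\Omega_j(P;x)$ is a rational function in $x$ whose denominator involves only the linear forms $\<x,u_F\>$ and in particular does not involve $x_0$, so the substitution $x\mapsto x - x_0 t$ admits a Taylor expansion at $x_0 = 0$:
\[
\Omega_j(P;\,x - x_0 t) = \sum_{m\ge 0}\frac{(-x_0)^m}{m!}(t\cdot\nabla_x)^m \Omega_j(P;x).
\]
Matching coefficients of $x_0^s$ on both sides yields the key identity
\[
\Omega_s(P+t;x) = \Omega_s(P;x) + \sum_{j=0}^{s-1}\frac{(-1)^{s-j}}{(s-j)!}(t\cdot\nabla_x)^{s-j}\Omega_j(P;x).
\]

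To conclude, the hypothesis $\drop(P)\ge s$ combined with \cref{res:higher_drops} (applied with $s_0 = s-1$) gives $\Omega_j(P)\equiv 0$ for all $0\le j\le s-1$. Consequently every derivative term in the sum above vanishes identically, leaving only $\Omega_s(P;x)$, which is exactly the claim. The main technical point I expect is the careful justification of the Taylor expansion and the combinatorial bookkeeping when matching coefficients; since the denominators of the $\Omega_j(P;x)$ are products of the linear forms $\<x,u_F\>$ and hence regular at $x_0 = 0$ after the substitution, this expansion is valid in the appropriate localization and the remaining work is purely algebraic.
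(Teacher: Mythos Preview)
Your proof is correct and follows essentially the same route as the paper: both establish the identity $\Omega(P+t;x_0,x)=\Omega(P;x_0,x-x_0t)$ via the unimodular shear $(x_0,x)\mapsto(x_0,x+x_0t)$ and \cref{res:Omega_trafo}, then invoke \cref{res:higher_drops} to kill the lower-order terms. The only cosmetic difference is that where you explicitly Taylor-expand each $\Omega_j(P;x-x_0t)$ and match coefficients, the paper simply notes that the vanishing of $\Omega_r(P)$ for $r<s$ makes both power series divisible by $x_0^{s}$, divides through, and sets $x_0=0$---a slightly quicker way to extract the same conclusion without writing down the directional derivatives.
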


\begin{proof}
    For $t\in\RR^d$, let $A_t \in\GL(\RR^{d+1})$ be the linear transformation given~by~$(x_0,x)\mapsto (x_0,x+x_0t)$. Clearly $\det A_t = 1$ and $A_t^{-1} = A_{-t}$. Moreover, we~have $(P+t)^{\hom} = A_tP^{\hom}$. 
    Using the geometric definition \eqref{eq:Omegahom_geom} of $\Omega(P;x_0,x)$, as~well~as~\cref{res:Omega_trafo}, we obtain
    %
    %
    \[\begin{split}
        \sum_{r\geq 0} \Omega_r(P+t;x)x_0^r &= \Omega((P+t)^{\hom}; (x_0,x))
        \\[-2.3ex]
        &= \Omega(P^{\hom}; (x_0,x-x_0t)) = \sum_{s\geq 0}\Omega_r(P;x-x_0t)x_0^r.
    \end{split}\]
    Since $\drop(P)\ge s$, \cref{res:higher_drops} yields $\Omega_{r}(P)=0$ for all $r<s$.
    We can then divide both sides of the above identity by $x_0^{s}$ to arrive at 
    \begin{equation*}
    \label{eq:fps_shenanigans}
        \sum_{r\geq s}\Omega_r(P+t;x)x_0^{r-s} = \sum_{r\geq s}\Omega_r(P;x-x_0t)x_0^{r-s}.
    \end{equation*}
    Setting $x_0=0$ yields the claim.
    %
\end{proof}

We close this section with an application to polytope decompositions.
Recall~that by \cref{res:drop_tiling} we have $\drop(P_1\cupdot\cdots\cupdot P_n)\ge \min_i\drop(P_i)$.
We can now make a statement about the equality case: if for given pieces $P_i\subset\RR^d$ it is possible to built at least one polytope whose drop attains the lower bound $\min_i\drop(P_i)$, then all polytopes built from these pieces will attain the bound.

\begin{lemma}
    Given $P=P_1\cupdot\cdots\cupdot P_n$ and $Q=(P_1+t_1)\cupdot\cdots\cupdot(P_n+t_n)$, then
    $\drop(P)=\min_i\drop(P_i)$ if and only if $\drop(Q)=\min_i\drop(P_i)$.
\end{lemma}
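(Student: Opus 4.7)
Let $s:=\min_i\drop(P_i)$. By \cref{res:drop_tiling} we already have $\drop(P),\drop(Q)\ge s$, so the task reduces to showing that $\drop(P)>s$ if and only if $\drop(Q)>s$. The plan is to reformulate this in terms of the higher valuations $\Omega_r$ from \cref{sec:higher_valuations} and then exploit the partial translation-invariance of $\Omega_s$ on polytopes of sufficiently high drop.

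First I would invoke \cref{res:higher_drops} to translate the condition $\drop(P)>s$ into the vanishing statements $\Omega_r(P)=0$ for all $0\le r\le s$, and similarly for $Q$. For $r<s$ the equalities $\Omega_r(P)=0=\Omega_r(Q)$ come for free: each piece $P_i$ (and each translate $P_i+t_i$, by the affine invariance of the drop in \cref{res:drop_properties}\ref{it:trafo}) has drop at least $s>r$, hence $\Omega_r(P_i)=\Omega_r(P_i+t_i)=0$ by \cref{res:higher_drops}, and the simple-valuation property of $\Omega_r$ gives
\[
\Omega_r(P)=\sum_i\Omega_r(P_i)=0,\qquad \Omega_r(Q)=\sum_i\Omega_r(P_i+t_i)=0.
\]

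The decisive step is the case $r=s$. Here I would apply \cref{res:omegas_transl_inv}: since $\drop(P_i)\ge s$ for every $i$, the valuation $\Omega_s$ \emph{is} translation-invariant on each individual piece, so $\Omega_s(P_i+t_i)=\Omega_s(P_i)$. Summing over $i$ using that $\Omega_s$ is a simple valuation yields
\[
\Omega_s(Q)=\sum_i\Omega_s(P_i+t_i)=\sum_i\Omega_s(P_i)=\Omega_s(P).
\]
Thus $\Omega_s(P)=0$ if and only if $\Omega_s(Q)=0$. Combined with the first step, this gives $\drop(P)>s\Leftrightarrow\drop(Q)>s$, which is exactly the claimed equivalence.

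I do not expect a serious obstacle here; the lemma is essentially a corollary of \cref{res:higher_drops} together with \cref{res:omegas_transl_inv}. The only subtle point is making sure that the partial translation-invariance of $\Omega_s$ is applied at the level of each individual piece $P_i$ (where it is valid because $\drop(P_i)\ge s$), rather than to $P$ or $Q$ as a whole, where it need not hold.
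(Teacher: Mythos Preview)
Your proof is correct and uses the same key ingredients as the paper's argument: \cref{res:higher_drops} to relate the drop to the vanishing of the $\Omega_r$, and \cref{res:omegas_transl_inv} to move the pieces without affecting $\Omega_s$. The paper's version is phrased slightly differently---it assumes $\drop(P)=s$, shows directly that $\Omega_s(Q)=\Omega_s(P)\neq 0$, and concludes $\drop(Q)\le s$ (the other inequality and the reverse implication being handled by \cref{res:drop_tiling} and symmetry)---but the substance is identical.
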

\begin{proof}
    Assume $s:=\drop(P)=\min_i\drop(P_i)$.
    By \cref{res:higher_drops} we have $\Omega_s(P)\not=0$, and by \cref{res:omegas_transl_inv} we have that $\Omega_s$ is translation-invariant on the $P_i$.
    Hence
    \begin{align*}
        \Omega_s(Q) 
        &= \Omega_s((P_1+t_1)\cupdot\cdots\cupdot(P_n+t_n))
        \\&= \Omega_s(P_1+t_1)+\cdots+\Omega_s(P_n+t_n)
        \\&= \Omega_s(P_1)+\cdots+\Omega_s(P_n)
        = \Omega_s(P_1\cupdot\cdots\cupdot P_n)
        = \Omega_s(P) \not=0
    \end{align*}
    By \cref{res:higher_drops} we can conclude $\drop(Q)\le s$.
    But by \cref{res:drop_tiling} we also have $\drop(Q)\ge \min_i\drop(P_i)\ge s$, and hence $\drop(Q)=s$.
\end{proof}

One can read this as follows: the drop is a translation scissors invariant as long~as we only decompose into pieces of sufficiently high drop.

\par\bigskip
\noindent
\textbf{Funding.} 
Tom Baumbach is supported by the Deutsche Forschungsgemeinschaft (DFG, German Research Foundation) under Germany’s Excellence Strategy -- The Berlin Mathematics Research Center MATH\raisebox{0.25ex}{$+$} (EXC-2046/1, project ID 390685689).

Ansgar Freyer is supported by the SPP 2458 ``Combinatorial Synergies'', funded by the Deutsche Forschungsgemeinschaft (DFG, German Research Foundation) -- project ID 539867386.

Julian Weigert is supported by the SPP 2458 ``Combinatorial Synergies'', funded by the Deutsche Forschungsgemeinschaft (DFG, German Research Foundation), project ID: 539677510.

Martin Winter is supported as Dirichlet Fellow by the Berlin Mathematics Research Center MATH\raisebox{0.25ex}{$+$} and the Berlin Mathematical School, funded by the Deutsche Forschungsgemeinschaft (DFG, German Research Foundation) under Germany’s Excellence Strategy (EXC-2046/1, project ID 390685689). 


\bibliographystyle{abbrv}
\bibliography{literature}
\addresseshere

\newpage

\appendix

\section{Computing $\Omega$ for simplices and simplicial cones}
\label{sec:appendix_simplices}

In this section we explicitly compute the formulae for the canonical forms of~simplicial cones and simplices as presented in \cref{ex:simplices}. 

\begin{proposition}
\label{prop:omega_cone}
    Let $C := \{y\in\RR^d \mid \langle u_i, y \rangle \leq 0\}$ be a simplicial cone with facet normals $u_1,...,u_d\in\RR^d\setminus\{0\}$. Its canonical form is given by 
    \[
        \Omega(C;x) = (-1)^d \frac{|\det(u_1,...,u_d)|}
        {\prod_i\langle u_i,x\rangle}.
    \]
\end{proposition}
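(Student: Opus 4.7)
The plan is to compute $(C-x)^\circ$ explicitly as a simplex and then take its volume. Recall that $\Omega$ is defined as the unique rational function in $x$ matching $d!\vol(C-x)^\circ$ on $\Int(C)$, so it suffices to verify the formula for $x\in\Int(C)$.

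Fix $x\in\Int(C)$ and set $c_i:=-\<u_i,x\>>0$. Then $C-x=\{y\in\RR^d:\<u_i,y\>\le c_i\}$ is a translated simplicial cone containing the origin in its interior. The key claim is
$$(C-x)^\circ=\conv\{0,u_1/c_1,\ldots,u_d/c_d\}=:S.$$
The inclusion $S\subseteq (C-x)^\circ$ is immediate, since the condition $\<u_i/c_i,y\>\le 1$ for all $y\in C-x$ is just a scaling of one of the defining inequalities.

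For the reverse inclusion, let $e_1,\ldots,e_d$ denote the (primitive) edge directions of $C$, normalized so that $\<u_i,e_j\>=-\delta_{ij}$. Then $C-x=\{-x+\sum_j\alpha_j e_j:\alpha_j\ge0\}$. Given $z\in(C-x)^\circ$, expand $z=\sum_i\lambda_i\,u_i/c_i$ in the basis $\{u_i/c_i\}$. Testing $z$ against the rays $\alpha_j\to\infty$ yields $\<z,e_j\>\le 0$, which computes to $-\lambda_j/c_j\le 0$, hence $\lambda_j\ge 0$. Testing against the apex $\alpha=0$ gives $\<z,-x\>\le 1$, which computes to $\sum_i\lambda_i\le 1$ using $\<u_i,x\>=-c_i$. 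These are exactly the conditions for $z\in S$.

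Once the polar is identified as this simplex, its volume is the standard formula
$$\vol(S)=\frac{1}{d!}\,|\!\det(u_1/c_1,\ldots,u_d/c_d)|=\frac{|\!\det(u_1,\ldots,u_d)|}{d!\,\prod_i c_i}.$$
Multiplying by $d!$ and substituting $c_i=-\<u_i,x\>$ gives the claim, the sign $(-1)^d$ emerging from $\prod_i(-\<u_i,x\>)=(-1)^d\prod_i\<u_i,x\>$. The only real work is the identification of $(C-x)^\circ$ with $S$; since $C-x$ is unbounded, one cannot simply invoke the vertex/facet duality for polytopes, but the direct argument via edge-rays and apex above handles this cleanly.
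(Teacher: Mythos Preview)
Your proof is correct and follows the same approach as the paper: identify $(C-x)^\circ$ as the simplex $\conv\{0,u_1/c_1,\ldots,u_d/c_d\}$ and compute its volume via the determinant formula. The only difference is that you supply a full two-sided verification of the polar dual (using the edge directions $e_j$ and the apex $-x$), whereas the paper simply asserts this identification.
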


\begin{proof}
    Let $x\in \Int(P)$, then $(C-x)^\circ$ is bounded. In this case we have
    \[
        C-x 
        = \big\{y\in\RR^d \mid \langle u_i,y\rangle \leq -\langle u_i,x\rangle\big\} 
        = \left\{ y\in\RR^d \,\middle\vert\, \left\langle\frac{u_i}{-\langle u_i,x\rangle}, y \right\rangle\leq 1\right\}.
    \]
    Its dual $(C-x)^\circ = \conv(\{0\}\cup\{u_i/(-\langle u_i,x\rangle) \mid 1\le i\le d\})$ is a simplex, the volume of which we can express via a determinant:
    \[
        \Omega(C;x) 
        = d! \vol(C-x)^\circ 
        = \left|\,\det\! \left(\frac{u_i}{-\langle u_i,x\rangle} \,\middle\vert\, 1\le i\le d\right) \right| 
        = \frac{|\det(u_1,...,u_d)|}{\prod_i (-\langle u_i,x\rangle)}.
    \]
    Note that in the last step we once again used $x\in\Int(C)$, which guarantees that~the factors $-\langle u_i,x\rangle$ are positive and can be pulled out of the absolute value. 
\end{proof}

\begin{proposition}
\label{prop:omega_simplex}
    Let $\Delta := \{ y\in\RR^d \mid \langle u_i,x\rangle \leq h_i\}$ be a simplex with facet normals $u_0,...,u_d\in\RR^d\setminus\{0\}$ and facet heights $h_0,...,h_d\in\RR$. Its canonical form is given by
    \[
        \Omega(\Delta; x) = \frac{\left | \det\! \left( \!\begin{pmatrix}
            u_i\\
            h_i
        \end{pmatrix} \,\middle|\, 0\leq i\leq d\right)\right|}{\prod_i (h_i - \langle u_i,x\rangle) }.
    \]
\end{proposition}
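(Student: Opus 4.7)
The plan is to mirror the structure of Proposition A.1, but adapted to a simplex whose polar dual has $d+1$ vertices rather than $d$.

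First, I would restrict attention to $x\in\Int(\Delta)$, so that the quantities $\lambda_i := h_i - \langle u_i,x\rangle$ are all strictly positive. Translating by $-x$ puts the simplex in the form $\Delta-x = \{y : \langle u_i/\lambda_i, y\rangle \le 1\}$, whose polar dual is the simplex
\[
(\Delta-x)^\circ = \conv\big(u_0/\lambda_0,\ldots,u_d/\lambda_d\big).
\]
This identification is the direct analogue of the first step in the proof of \cref{prop:omega_cone}.

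Second, I would compute the volume using the standard homogenized determinantal formula for a simplex with vertices $v_0,\ldots,v_d \in \RR^d$:
\[
\vol(\conv(v_0,\ldots,v_d)) = \frac{1}{d!}\left|\det\begin{pmatrix} v_0 & \cdots & v_d \\ 1 & \cdots & 1 \end{pmatrix}\right|.
\]
Applying this to $v_i = u_i/\lambda_i$ and multiplying the $i$-th column by $\lambda_i$ (which pulls out a factor $\prod_i \lambda_i$ and replaces the row of ones by $(\lambda_0,\ldots,\lambda_d)$), I obtain
\[
\Omega(\Delta;x) = d!\vol((\Delta-x)^\circ) = \frac{1}{\prod_i\lambda_i}\left|\det\begin{pmatrix} u_0 & \cdots & u_d \\ \lambda_0 & \cdots & \lambda_d \end{pmatrix}\right|.
\]
Since all $\lambda_i$ are positive, the rescaling is compatible with the absolute value.

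Finally, I would show that the $\lambda$-row can be replaced by the $h$-row without changing the determinant. Because $\lambda_i = h_i - \sum_k (u_i)_k\, x_k$, the bottom row of the $\lambda$-matrix differs from the bottom row of the $h$-matrix by the linear combination $-\sum_k x_k \cdot (\text{row } k)$ of the first $d$ rows. This is a single elementary row operation, and hence leaves the determinant invariant. As a byproduct, the determinant is seen to be independent of $x$, and the proposition follows.

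The only non-trivial step is the last one, but it is essentially free once one notices the linear dependence of $\lambda_i$ on $x$; the rest is the same dual-polytope computation used to handle simplicial cones.
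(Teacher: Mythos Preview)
Your proof is correct and essentially identical to the paper's: both identify $(\Delta-x)^\circ$ as the simplex on the vertices $u_i/(h_i-\langle u_i,x\rangle)$, compute its volume via the $(d{+}1)\times(d{+}1)$ determinant with a row of ones, pull the factors $h_i-\langle u_i,x\rangle$ out of the columns, and then replace the bottom row by $(h_0,\dots,h_d)$ via a row operation. The only cosmetic difference is that the paper justifies the homogenized volume formula by viewing the simplex as the base of a height-$1$ pyramid in $\RR^{d+1}$, whereas you quote it as a standard fact.
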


\begin{proof}
    Let $x\in\Int(\Delta)$. 
    Similar to the proof of \cref{prop:omega_cone} we have 
    \[
    (\Delta-x)^\circ 
    = \conv \left\{ \frac{u_i}{h_i-\langle u_i,x\rangle} \,\middle|\, 0\leq i\leq d \right\}.
    \]
    A convenient way to compute the volume of the dual simplex is by computing the volume of a pyramid of height 1 with $(\Delta-x)^\circ$ as a base. 
    More precisely,\nls let $\overline{T} := \conv(\{0\} \cup (\{1\}\times (\Delta-x)^\circ)\subset\RR^{d+1}$. Then, $\vol\overline T = \tfrac{1}{d+1}\vol(\Delta-x)^\circ$ and~we~obtain
    \[\begin{split}
        \Omega(\Delta;x) 
        &= 
        d!\vol(\Delta-x)^\circ 
        = 
        (d+1)! \vol(\overline T) 
        = \left|\det\!\left(\!\begin{pmatrix}
            \tfrac{u_i}{h_i-\langle u_i,x\rangle}
            \\
            1
        \end{pmatrix} \,\middle|\, 0\leq i\leq d \right)\right|\\
        &=
        \frac{\left|\det\!\left(\! \begin{pmatrix}
            u_i
            \\
            h_i-\langle u_i,x\rangle 
        \end{pmatrix} \,\middle|\, 0\leq i\leq d \right)\right|}{\prod_i (h_i - \langle u_i,x\rangle)} = \frac{\left|\det\!\left(\! \begin{pmatrix}
            u_i
            \\
            h_i 
        \end{pmatrix} \,\middle|\, 0\leq i\leq d \right)\right|}{\prod_i (h_i - \langle u_i,x\rangle)}.
    \end{split}\]
    The last equality is obtained by a row operation: 
    if the first $d$ rows of the matrix are labeled $r_1,...,r_d$, we obtain the final expression by adding $x_ir_i$ to the last row.
\end{proof}


\section{Restricting the canonical form to facets}
\label{sec:appendix_facet_restriction}

We prove the correctness of the facet restriction formula \eqref{eq:Omega_on_face} under the assumption that the normal vector $u_F$ is normalized to length 1: if $F\subset P$ is a facet~and $x\in\aff(F)$, then
$$
    \Omega^{d-1}(F;x) 
    = 
    \frac{\adj_P(x)}{\prod_{\substack{G\subset P\\G\not=F}} L_G(x)}
    =
    \lim_{\mathclap{\tilde x\to x}} \big(L_F(\tilde x)\,\Omega^d(P;\tilde x)\big)
    .    
$$

W.l.o.g.\ we may assume $\aff(F)=\RR^{d-1}\times\{0\}\subset\RR^d$. 
We write $\hat F$ if we consider $F$ as a full-dimensional polytope in $\aff(F)\simeq\RR^{d-1}$.
Fix~$\hat x\in\Int(\hat F)$ and set $x_t:=(\hat x,t)$. 
We may assume that $t>0$ and small enough, so that $x_t\in\Int(P)$.\nls
Since~$u_F$~is~normalized, $L_F(x):=h_F-\<x,u_F\>$ measures the Euclidean distance of $x$ from $\aff(F)$.
In particular, $L_F(x_t)=t$.
If $S_t$ denotes the diagonal matrix with diagonal entries $(1,...,1,t)$, that is, $S_t$ represents non-uniform scaling in the last coordinate, then $|\det(S_t)|=t$ and $S_t^{-1}=S_{1/t}$.
If $x:=x_0=(\hat x,0)$, using \cref{res:Omega_trafo} we can rewrite
\begin{align*}
    \frac{\adj_P(x)}{\smash{\prod_{\substack{G\subset P\\G\not=F}}} L_G(x)}
      &=\, \lim_{t\to0} \big( L_F(x_t) \Omega^d(P;x_t) \big)
       = \lim_{t\to0} \big( |\det(S_t)| \,\Omega^d(P;x) \big)
    \\&%
       \overset{\smash{\mathclap{\text{\ref{res:Omega_trafo}}}}}%
       =\, \lim_{t\to0} \Omega^d(S_{1/t} P; S_{1/t}x)
       = \lim_{t\to0} \Omega^d(S_{1/t} P; (\hat x,1))
\end{align*}

The effect of $S_{1/t}$ on $P$ as $t\to 0$ is that $P$ becomes increasingly stretched~along~the last coordinate axis, while $F$ and $(\hat x,1)$ stay put (see \cref{fig:stretching_polytope}). 
\begin{figure}[h!]
    \centering
    \includegraphics[width=0.9\linewidth]{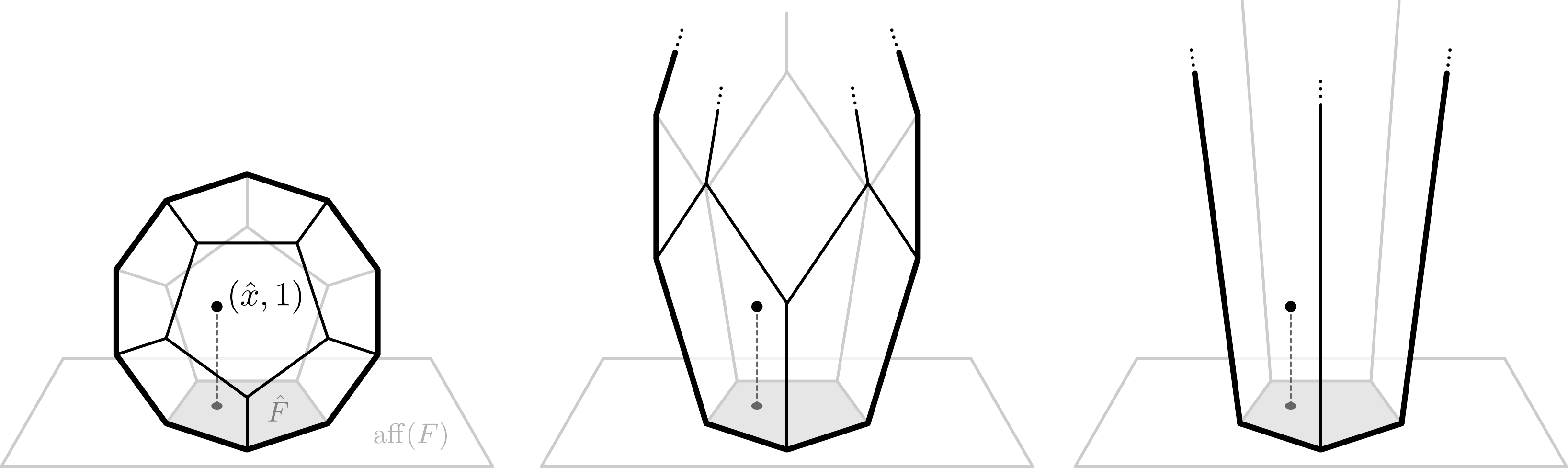}
    \caption{$S_{1/t}P$ for decreasing values of $t$.}
    \label{fig:stretching_polytope}
\end{figure}

In the limit $t\to 0$, $S_{1/t}P$ approaches a half-infinite cylinder over $\hat F$.
Since $(\hat x,1)$ has distance 1 from the base face, the dual of $S_{1/t}P$, polarized at $(\hat x,1)$, approaches a pyramid of height one over $\smash{(\hat F-\hat x)^\circ}$ (see \cref{fig:stretching_polytope_lim_polar}).
\begin{figure}[h!]
    \centering
    \includegraphics[width=0.6\linewidth]{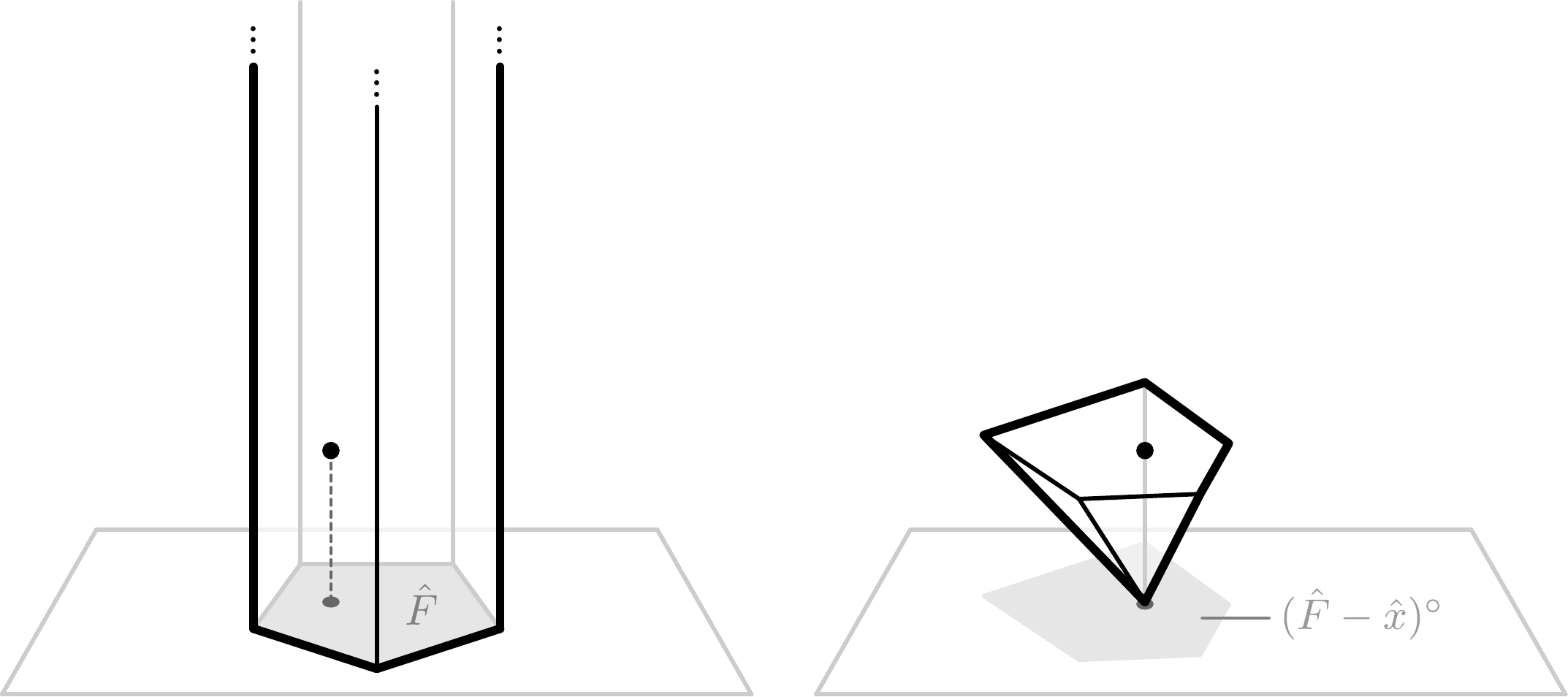}
    \caption{Left: limit of $S_{1/t}P$ as $t\to0$. Right: dual of the limit.}
    \label{fig:stretching_polytope_lim_polar}
\end{figure}

\noindent
The proof now concludes as follows:
\begin{align*}
    \frac{\adj_P(x)}{\smash{\prod_{\substack{G\subset P\\G\not=F}}} L_G(x)}
         \,&\overset{\smash{\mathclap{\text{\eqref{eq:omegasdef}}}}}=\, \lim_{t\to0} \big(d!\vol(S_{1/t} P-(\hat x,1))^\circ\big)
       \\&=\, d!\cdot \underbrace{\tfrac1d\vol(\hat F-\hat x)^\circ}_{\mathclap{\text{volume of pyramid over $(\hat F-\hat x)^\circ$ of height 1}}}
          = \Omega^{d-1}(\hat F,\hat x)
          = \Omega^{d-1}(F,x).
\end{align*}

\section{Adjoint degrees for polytope products}
\label{sec:appendix_adjoint_product}

The following is needed in the proof of \cref{res:drop_properties} \ref{it:product}: given polytopes $P_1\subset \RR^{d_1}$ and $P_2\subset\RR^{d_2}$, it holds
$$\deg(\adj_{P_1\times P_2})=\deg(\adj_{P_1})+\deg(\adj_{P_2}).$$
To show this, we first compute the canonical form of $P_1\times P_2$.
Since this \mbox{involves~com}\-puting the volume of the dual of $P_1\times P_2$, we recall the following facts:
$$(P_1\times P_2)^\circ=P_1^\circ\oplus P_2^\circ:=\conv\big((P_1^\circ\times\{0\}^{d_2})\cup (\{0\}^{d_1}\times P_2^\circ)\big)\subset\RR^{d_1+d_2}.$$
where $\oplus$ denotes the \emph{direct sum} of convex polytopes. It is also well-known that
$$\vol(P_1\oplus P_2)=\frac{d_1!d_2!}{(d_1+d_2)!}\cdot\vol(P_1)\vol(P_2).$$
If $(x_1,x_2)\in\RR^{d_1+d_2}$, then
%
    \begin{align*}
        \Omega(P_1\times P_2;(x_1,x_2))
        &=
        (d_1+d_2)! \cdot \vol((P_1\times P_2)-(x_1,x_2))^\circ
        \\ &=
        (d_1+d_2)! \cdot\vol((P_1-x_1)\times(P_2-x_2))^\circ
        \\&=
        (d_1+d_2)! \cdot\vol((P_1-x_1)^\circ\oplus(P_2-x_2)^\circ)
        \\&=
        d_1!d_2! \cdot \vol(P_1-x_1)^\circ \vol(P_2-x_2)^\circ
        \\&=
        \Omega(P_1;x_1)\,\Omega(P_2;x_2).
    \end{align*}

Recall that if $F_1\subset P_1$ is a facet of $P_1$ with linear form $L_{F_1}(x_1):= h_{F_1}-\<x_1,u_{F_1}\>$, then $F:=F_1\times P_2$ is a facet of $P_1\times P_2$ which has the essentially same linear form $L_F(x_1,x_2):=h_{F_1}-\<(x_1,0),(u_{F_1},0)\>=L_{F_1}(x_1)$. Analogous statements hold if we start from a facet $F_2\subset P_2$.
Moreover, all facets of $P_1\times P_2$ are of this form.
Hence
\begin{align*}
\frac{\adj_{P_1\times P_2}(x_1,x_2)}{\prod_F L_F(x_1,x_2)} 
&=
\Omega(P_1\times P_2;(x_1,x_2))
=
\Omega(P_1;x_1)\,\Omega(P_2;x_2)
\\[-0.5ex]&=
\frac{\adj_{P_1}(x_1)}{\prod_{F_1}L_{F_1}(x_1)}\frac{\adj_{P_2}(x_2)}{\prod_{F_2}L_{F_2}(x_2)}
\\[1ex]&=
\frac{\adj_{P_1}(x_1)\adj_{P_2}(x_2)}{\prod_{F_1}L_{F_1\times P_2}(x_1,x_2)\prod_{F_2}L_{P_1\times F_2}(x_1,x_2)}
\\[1ex]&=
\frac{\adj_{P_1}(x_1)\adj_{P_2}(x_2)}{\prod_F L_F(x_1,x_2)}.
\end{align*}
Comparing numerators, we indeed obtain $\adj_{P_1\times P_2}(x_1,x_2)=\adj_{P_1}(x_1)\adj_{P_2}(x_2)$, from which follows the additivity of degrees.

\section{Equivalence of the definitions of homogenized canonical form}
\label{sec:appendix_homogenized_Omega}


We prove that the two definitions of homogenized canonical $\Omega(P;x_0,x)$ given in \cref{sec:homogenized_Omega} are equivalent.

\begin{lemma}
    \label{res:homogenized_Omega}
    $\Omega(P^{\hom};(x_0,x))=x_0^{-d-1} \Omega(P;x/x_0).$
\end{lemma}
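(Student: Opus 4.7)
The plan is to verify the identity by reducing to the case where $P$ is a simplex via a triangulation argument, and then to check it directly using the explicit formulas of \cref{ex:simplices}. The key observation is that both sides define simple valuations in $P$, so it suffices to verify the identity on the simplices of any triangulation.

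For the left-hand side, the homogenization operation $P \mapsto P^{\hom}$ respects disjoint-interior decompositions: if $P = P_1 \cupdot \cdots \cupdot P_n$, then $P^{\hom} = P_1^{\hom} \cupdot \cdots \cupdot P_n^{\hom}$ (since $\cone$ is applied to the base $\{1\} \times P$ where disjointness of interiors transfers). Composing with the valuation $\Omega$ on $(d+1)$-dimensional polyhedral cones therefore yields a valuation in $P$. For the right-hand side, $x_0^{-d-1}\Omega(P;x/x_0)$ is obtained from the valuation $\Omega(P;\cdot)$ by an $\RR(x_0,x)$-linear substitution of the $x$-argument, so the valuation property is inherited. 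Both sides visibly vanish when $\Int(P) = \eset$, confirming simplicity.

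It then remains to check the identity for a $d$-simplex $\Delta = \{y \in \RR^d \mid \langle y, u_i\rangle \leq h_i\}$. Its homogenization $\Delta^{\hom}$ is the simplicial cone in $\RR^{d+1}$ with outward facet normals $v_i := (-h_i, u_i)$, cut out by the linear forms $L_i^{\hom}(x_0,x) = h_i x_0 - \langle x, u_i\rangle = -\langle (x_0,x), v_i\rangle$. Applying the simplicial-cone formula of \cref{ex:simplices} in ambient dimension $d+1$ gives
$$\Omega(\Delta^{\hom};(x_0,x)) = (-1)^{d+1}\frac{|\det(v_0,\ldots,v_d)|}{\prod_i \langle (x_0,x), v_i\rangle} = \frac{|\det(v_0,\ldots,v_d)|}{\prod_i (h_i x_0 - \langle x, u_i\rangle)},$$
where the $(-1)^{d+1}$ cancels with the $(-1)^{d+1}$ obtained from pulling the minus sign out of each factor in the denominator. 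The determinant on top is precisely the one appearing in the simplex formula for $\adj_\Delta$ in \cref{ex:simplices}, so the numerator equals $\adj_\Delta$. On the other hand, substituting $x \mapsto x/x_0$ into the simplex formula for $\Omega(\Delta;x)$ and multiplying by $x_0^{-d-1}$ clears exactly one factor of $x_0$ from each of the $d+1$ denominator factors, producing the same rational function.

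The only delicate step is the bookkeeping of signs when invoking \cref{ex:simplices}, which involves absolute values of determinants. This is easily dispatched by evaluating both sides at any point $(x_0^*, x^*)$ with $x_0^* > 0$ and $x^*/x_0^* \in \Int(\Delta)$ -- equivalently, $(x_0^*,x^*) \in \Int(\Delta^{\hom})$ -- where both sides are manifestly positive volumes of polar duals, fixing the overall sign unambiguously.
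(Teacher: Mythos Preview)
Your proof is correct and follows essentially the same approach as the paper: reduce to simplices via the valuation property of both sides, then verify the identity directly using the explicit formulas of \cref{ex:simplices} for simplices and simplicial cones. Your version is somewhat more explicit than the paper's in justifying why both sides are valuations and in handling the sign bookkeeping, but the core argument is identical.
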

\begin{proof}
    Since both sides of the equation are valuations, by \cref{res:Omega_valuation} it suffices~to triangulate $P$ and prove the statement for simplices.
    Given a simplex $\Delta:=\{x\in\RR^d$ $\mid \<x,u_i\> \le h_i\}$, its homogenization is a simplicial cone
    $$\Delta^{\hom}=\{(x_0,x)\in\RR^{d+1}\mid \<(x_0,x),(-h_i,u_i)\>\le 0\}$$ 
    Using the explicit expressions given in \cref{ex:simplices} (see also \cref{sec:appendix_simplices}), we verify
    \begin{align*}
        \Omega(\Delta^{\hom};(x_0,x)) 
        &= \frac{
            \left|\,\det\!\begin{pmatrix}
                -h_0 & \ndots & -h_d
                \\[0.3ex]
                | & & | \\[-0.5ex]
                u_0 & \ndots & u_d \\
                | & & |
            \end{pmatrix}\right|
        }{\prod_i (h_i x_0 - \<x,u_i\>)} 
        = \frac{
            \left|\,\det\!\begin{pmatrix}
                | & & | \\[-0.5ex]
                u_0 & \ndots & u_d \\
                | & & | \\[1ex]
                h_0 & \ndots & h_d
            \end{pmatrix}\right|
        }{x_0^{d+1}\prod_i (h_i - \<x/x_0,u_i\>)}
        \\[1ex]
        &= x_0^{-d-1} \Omega(\Delta;x/x_0).
        \qedhere
    \end{align*}
\end{proof}

\section{Computing the adjoint of edge tangent cones}
\label{sec:appendix_lij_adj_Tij}

For the notation and setting, see \cref{sec:decomposition_on_simplices}.
We prove the following:

\begin{lemma}
    \label{res:lij_edj_Tij}
    If $u_0,...,u_d$ is a positive affine basis, and if $i<j$, then
    \begin{align}
    \ell_{ij}\adj_{T_{ij}} = 
    -\det\!\begin{pmatrix}
        | &  & | &  & | & & |
        \\[-0.5ex]
        u_0 & \ndots & v_i & \ndots & v_j & \ndots & u_d
        \\
        | &  & | &  & | & & |
        \\[0.5ex]
        0 & \ndots & 1 & \ndots & 1 & \ndots & 0
    \end{pmatrix}.
    \end{align}
\end{lemma}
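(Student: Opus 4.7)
My plan is to reduce the $(d+1)\times(d+1)$ determinant on the right-hand side via column operations and cofactor expansion, and to compare the result to $\ell_{ij}\adj_{T_{ij}}$. First, I would subtract the column containing $(v_i,1)^{\top}$ from the one containing $(v_j,1)^{\top}$. The latter becomes $(v_j-v_i,0)^{\top}$, and the bottom row is left with a single $1$ at position $i$. Expanding along the bottom row produces a sign $(-1)^{d+i}$ times a $d\times d$ minor whose columns are the $u_k$ for $k\ne i,j$ together with $v_j-v_i$ sitting at position $j$.

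The central geometric observation is that since the edge $e_{ij}$ is contained in every facet $F_k$ with $k\ne i,j$, the direction $\vec e_{ij}=(v_j-v_i)/\ell_{ij}$ satisfies $\langle u_k,\vec e_{ij}\rangle=0$ for all such $k$. After moving the $(v_j-v_i)$-column to the rightmost position (an additional sign $(-1)^{d-j}$), the $d\times d$ minor has one column orthogonal to all others. Its absolute value therefore equals $\ell_{ij}$ times the $(d-1)$-dimensional volume of the parallelotope spanned by $\{u_k:k\ne i,j\}$ inside $\vec e_{ij}^{\perp}$, which by \eqref{eq:adjoint_simplicial_cone} applied to the simplicial cone $T_{ij}$ is exactly $\ell_{ij}\adj_{T_{ij}}$.

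The remaining work is sign tracking, which I expect to be the main obstacle. Combining the two column-operation signs above gives $(-1)^{j-i}$, so the lemma reduces to the claim that the oriented determinant $\det(u_0,\ldots,\hat u_i,\ldots,\hat u_j,\ldots,u_d,\vec e_{ij})$ carries sign $(-1)^{j-i+1}$ for every positive affine basis. I plan to establish this directly via Cramer's rule: the last column of the inverse of the $(d+1)\times(d+1)$ matrix $U$ underlying $\adj_\Delta$ consists of the positive entries $1/d_k$, where $d_k=h_k-\langle u_k,v_k\rangle>0$ is the distance from $v_k$ to $F_k$. This forces each $\det(u_0,\ldots,\hat u_k,\ldots,u_d)$ to have sign $(-1)^{k+d}$ under the positive affine basis hypothesis. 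Decomposing $u_i=\pi_{e_{ij}}u_i+(d_i/\ell_{ij})\vec e_{ij}$ and using multilinearity to substitute into $\det(u_0,\ldots,\hat u_j,\ldots,u_d)$ (the $\pi_{e_{ij}}u_i$ piece contributes a determinant with all columns in a single hyperplane and hence vanishes) then exhibits the sign as $(-1)^{j-i+1}$, closing the proof.
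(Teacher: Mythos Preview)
Your reduction to the $d\times d$ minor via column operations and cofactor expansion is correct and matches the paper's computation of the absolute value. Where you diverge from the paper is in the sign analysis, and your argument there is also correct: the Cramer's rule observation that the last column of $U^{-1}$ has entries $1/d_k>0$ indeed pins down the sign of each $\det(u_0,\ldots,\hat u_k,\ldots,u_d)$ as $(-1)^{k+d}$, and the decomposition $u_i=\pi_{e_{ij}}u_i+(d_i/\ell_{ij})\vec e_{ij}$ inside $\det(u_0,\ldots,\hat u_j,\ldots,u_d)$ cleanly transfers that sign to the determinant with $\vec e_{ij}$ in the last column. The sign bookkeeping checks out.

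The paper takes a different route for the sign. Rather than tracking signs through permutations, it multiplies $D_{ij}$ on the left by the matrix $A$ with columns $(u_k,-h_k)^\top$ and observes that the product $AD_{ij}$ is the Gram matrix $U^\top U$ with columns $i$ and $j$ replaced by negative multiples of standard basis vectors; developing along those two columns yields $\det(AD_{ij})=a_ia_j\det(U_{(ij)}^\top U_{(ij)})>0$, and since $\det(A)=-\adj_\Delta<0$ one gets $\det(D_{ij})<0$ directly. The trade-off: the paper's approach sidesteps all sign tracking at the cost of a slightly opaque matrix product, while your approach stays closer to the geometry (the roles of $d_i$ and $\vec e_{ij}$ are transparent) but demands careful index bookkeeping. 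Both are sound.
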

\begin{proof}
    We use
    \begin{align*}
        U:=\begin{pmatrix}
            | &  & |
            \\[-0.5ex]
            u_0 & \ndots & u_d
            \\
            | &  & |
        \end{pmatrix}
        \!,\quad\!\!
        A:=\begin{pmatrix}
            | & & | \\[-0.5ex]
            u_0 & \ndots & u_d \\
            | & & | \\[0.5ex]
            -h_0 & \ndots & -h_d
        \end{pmatrix}
        \!,\quad\!\!
        D_{ij}:=\begin{pmatrix}
            | &  & | &  & | & & |
            \\[-0.5ex]
            u_0 & \ndots & v_i & \ndots & v_j & \ndots & u_d
            \\
            | &  & | &  & | & & |
            \\[0.5ex]
            0 & \ndots & 1 & \ndots & 1 & \ndots & 0
        \end{pmatrix}
        \!.
    \end{align*}
    We shall assume a translation of $\Delta$ so that $h_i>0$. 
    In particular, since $u_0,...,u_d$~is~a positive affine basis, $\det(A)=-\adj_{\Delta}<0$.
    In order to prove $\ell_{ij}\smash{\adj_{T_{ij}}}=-\det(D_{ij})$ we shall show in two steps that the terms~agree in both sign and absolute value.
    
    Since $\ell_{ij}\adj_{T_{ij}}>0$, for the correct sign we need to show that $\det(D_{ij})<0$.\nls
    Since $\det(A)<0$, it suffices to verify that $\det(AD_{ij})=\det(A)\det(D_{ij})>0$.
    We inspect the entries of $AD_{ij}$:
    $$
    (AD_{ij})_{k\ell}=\begin{cases}
        \<u_k,u_\ell\> & \text{if $\ell\not\in\{i,j\}$}
        \\
        \<u_k,v_\ell\> - h_k = 0 & \text{if $\ell\in\{i,j\}$ and $k\not=\ell$}
        \\
        \<u_k,v_k\> - h_k =:a_k < 0 & \text{if $\ell\in\{i,j\}$ and $k=\ell$}
    \end{cases}
    $$
    where we used the definition of $h_i:=\<u_i,v_j\>$ for any $j\not=i$.
    In other words, $AD_{ij}$ is the Gram matrix $U\T U$, except that the columns $i$ and $j$ are replaced with $a_ie_i$ and $a_j e_j$ respectively ($e_k$ being the $k$-th standard unit vector).
    To compute $\det(AD_{ij})$, we first develop by the $i$-th, and then by the $j$-th column, each of which has only~a single non-zero (actually, negative) entry.
    If $U_{(ij)}$ denotes the sub-matrix~of~$U$~ob\-tained by deleting column $i$ and $j$, then
    $$\det(AD_{ij})=\underbrace{a_ia_j}_{>0}\cdot \underbrace{\det(U_{(ij)}\T U_{(ij)})}_{>0}>0,$$
    where we used that Gram matrices have positive determinants.

    To see that the absolute values agree, consider the following computation, where $\equiv$ means equality up to sign:
    \begin{align}
    \det(D_{ij})
    &\equiv\notag
    \det\!\begin{pmatrix}
        & | &  & | & & | & |
        \\
        \bdots & \hat u_i & \ndots & \hat u_j & \edots & v_i & v_j
        \\
        & | &  & | & & | & |
        \\[1ex]
        \bdots & 0 & \ndots & 0 & \ndots & 1 & 1
    \end{pmatrix}
    =\notag
    \det\!\begin{pmatrix}
        & | &  & | & & | & |
        \\
        \bdots & \hat u_i & \ndots & \hat u_j & \edots & v_i & v_j - v_i
        \\
        & | &  & | & & | & |
        \\[1ex]
        \bdots & 0 & \ndots & 0 & \ndots & 1 & 0
    \end{pmatrix}
    \\&\equiv \label{eq:Dij_form}
    \det\!\begin{pmatrix}
        & | &  & | & & |
        \\
        \bdots & \hat u_i & \ndots & \hat u_j & \edots & v_j-v_i
        \\
        & | &  & | & & |
    \end{pmatrix}
    =\notag
    \ell_{ij}\cdot \det\!\begin{pmatrix}
        & | &  & | & & |
        \\
        \bdots & \hat u_i & \ndots & \hat u_j & \edots & \vec e_{ij}
        \\
        & | &  & | & & |
    \end{pmatrix}
    \end{align}
    The last determinant agrees with $\ell_{ij}\adj_{T_{ij}}$ in absolute value by \eqref{eq:Dij_form}.
\end{proof}

\section{Computing $D_{ij}$ for ortho-simplices}
\label{sec:appendix_Dij}

Let $D_{ij}$ be the $(d+1)\times(d+1)$-matrix defined in \eqref{eq:Dij_def}.
Following the notation~of \cref{sec:decomposition_proof}, we show that for an ortho-simplex with parameters $\ell_0,...,\ell_{d+1}$ holds
\begin{equation}
    \label{eq:Dij_orthosimplex}
    \det(D_{ij}) = \frac{\ell_{i+1}^2+\cdots+\ell_j^2}{\ell_i\ell_{i+1}\ell_{j}\ell_{j+1}} \cdot \ell_0\cdots\ell_{d+1}.
\end{equation}


Note that we already know that both sides of \eqref{eq:Dij_orthosimplex} are positive.
Hence we tacitly ignore the signs from now on.
We can then work with $D_{ij}$ in the \mbox{equivalent~form~\eqref{eq:Dij_form}}, but we move the last column $v_j-v_i$ to index $j+1$.
Plugging in the values for~$v_i$~and $u_i$, we are left with computing the determinant of the following block matrix:
\newcommand{\pp}{\phantom+}
\newcommand{\lm}{\mathllap-}
%
$$
\left(\begin{array}{cccc|cccc|cccc}
    \ell_0 & \!\!-\ell_2 & & &
    & & & &
    & & &
    \\[-1.2ex]
    & \!\!\pp\ell_1 & \!\!\ddots\!\! & &
    & & & &
    & & &
    \\[-1.2ex]
    & & \!\!\ddots\!\! & \lm\ell_i & 
    & & & &
    & & &
    \\
    & & & \ell_{i-1} &
    & & & &
    & & &
    \\[0.7ex] \hline 
    & & & & 
    -\overset{\phantom.}\ell_{i+2} & & & \ell_{i+1} &
    & & &
    \\[-1.2ex]
    & & & & 
    \pp\ell_{i+1} & \!\!\ddots\!\! & &
    & & &
    \\[-1.2ex]
    & & & & 
    & \!\!\ddots\!\! & \lm\ell_j\!\!\!\! & \raisebox{0.5em}{\smash{\vdots}} &
    & & &
    \\
    & & & & 
    & & \ell_{j-1}\!\!\!\! & \ell_j &
    & & &
    \\[0.7ex] \hline 
    & & & &
    & & & &
    -\overset{\phantom.}\ell_{j+2} & &
    \\[-1.2ex]
    & & & &
    & & & &
    \pp\ell_{j+1} & \!\!\ddots\!\! &
    \\[-1.2ex]
    & & & &
    & & & &
    & \!\!\ddots\!\! & \!\!\!\!-\ell_d &
    \\
    & & & &
    & & & &
    & & \!\!\!\!\pp\ell_{d-1} & \!\!-\ell_{d+1}
\end{array}\right)
$$
Conveniently, this matrix is block-diagonal with square blocks of dimension $i$, $j-i$ and $d-j$ respectively.
Its determinant can therefore be computed by multiplying~the determinants of the diagonal blocks.
Evidently, the determinants of the top-left and bottom-right blocks multiply to $\ell_0\cdots\ell_{i-1}\ell_{j+2}\cdots\ell_{d+1}$ (up to a sign).
If we extract the middle block as
$$
M_{ij}:=
\begin{pmatrix}
    -\overset{\phantom.}\ell_{i+2} & & & \ell_{i+1}
    \\[-1.2ex]
    \pp\ell_{i+1} & \!\!\ddots\!\! &
    \\[-1.2ex]
    & \!\!\ddots\!\! & \lm\ell_j\!\!\!\! & \raisebox{0.5em}{\smash{\vdots}}
    \\
    & & \ell_{j-1}\!\!\!\! & \ell_j
\end{pmatrix}
$$
it suffices to prove the following:

\begin{lemma}
    $$\det(M_{ij}) = (-1)^{j-i-1}\frac{\ell_{i+1}^2+\cdots+\ell_j^2}{\ell_i\ell_{i+1}\ell_j\ell_{j+1}}\cdot \ell_{i}\cdots \ell_{j+1}.$$
\end{lemma}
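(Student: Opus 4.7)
The plan is to compute $\det(M_{ij})$ by induction on $m := j-i$. To streamline notation I set $a_k := \ell_{i+k}$ for $k = 1,\dots,m$, so that $M_{ij}$ becomes an $m\times m$ matrix whose first $m-1$ columns carry the upper-bidiagonal pattern with $-a_{c+1}$ on the diagonal and $a_c$ on the subdiagonal, and whose last column is $(a_1,a_2,\dots,a_m)^{\!\Tsymb}$. After rewriting the target in the more compact form
\[
\frac{\ell_{i+1}^2+\cdots+\ell_j^2}{\ell_i\ell_{i+1}\ell_j\ell_{j+1}}\cdot \ell_i\cdots\ell_{j+1} = (\ell_{i+1}^2+\cdots+\ell_j^2)\,\ell_{i+2}\cdots\ell_{j-1},
\]
the claim to prove reduces to
\[
\det(M_{ij}) = (-1)^{m-1}(a_1^2+a_2^2+\cdots+a_m^2)\,a_2 a_3\cdots a_{m-1}.
\]

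The key step is to set up a recursion by Laplace expansion along the first column, which has only two nonzero entries: $-a_2$ in row $1$ and $a_1$ in row $2$. The $(1,1)$-minor is exactly the analogous matrix for the shifted parameter tuple $(a_2,\dots,a_m)$, so it equals $f(a_2,\dots,a_m)$ if $f$ denotes the determinant as a function of the $a_k$. The $(2,1)$-minor is a bit more subtle: after deleting row $2$ and column $1$, the new first row retains only a single nonzero entry $a_1$ in the last column (the only surviving contribution from the original row $1$). Performing a second Laplace expansion along that row leaves an $(m-2)\times(m-2)$ matrix that is upper triangular with diagonal entries $a_2,a_3,\dots,a_{m-1}$, hence determinant $a_2a_3\cdots a_{m-1}$. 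Carefully tracking signs yields the recursion
\[
f(a_1,\dots,a_m) = -a_2\,f(a_2,\dots,a_m) + (-1)^{m+1}\,a_1^2\,a_2a_3\cdots a_{m-1}.
\]

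The induction is then immediate. The base case $m=2$ follows from a direct $2\times 2$ evaluation
\[
\det\begin{pmatrix} -a_2 & a_1 \\ a_1 & a_2 \end{pmatrix} = -a_2^2 - a_1^2 = -(a_1^2+a_2^2),
\]
which matches the formula. For the inductive step, substituting $f(a_2,\dots,a_m) = (-1)^{m-2}(a_2^2+\cdots+a_m^2)\,a_3\cdots a_{m-1}$ into the recursion produces two terms both carrying the factor $(-1)^{m-1} a_2\cdots a_{m-1}$, the sums $a_2^2+\cdots+a_m^2$ and $a_1^2$ of which combine to the full squared sum $a_1^2+\cdots+a_m^2$. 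Re-substituting $a_k = \ell_{i+k}$ then gives exactly the formula stated in the lemma.

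I do not anticipate any conceptual obstacle: the verification is purely computational. The only place requiring care is the signs in the two Laplace expansions and in the telescoping exponents $(-1)^{m+1}$ versus $(-1)^{m-1}$; as long as one consistently applies the cofactor sign convention, the argument is a short induction.
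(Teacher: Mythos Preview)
Your argument is correct and follows essentially the same route as the paper: both proofs proceed by induction on $m=j-i$ via a Laplace expansion along a row or column with exactly two nonzero entries, producing one recursive term (the analogous smaller $M$) and one explicit term (a triangular determinant). The only cosmetic difference is that the paper expands along the \emph{last row} (obtaining $M_{i(j-1)}$, i.e.\ the parameters $a_1,\dots,a_{m-1}$) while you expand along the \emph{first column} (obtaining the parameters $a_2,\dots,a_m$); the two recursions are mirror images of each other.

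One small remark: your compact rewriting $(a_1^2+\cdots+a_m^2)\,a_2\cdots a_{m-1}$ is only valid for $m\ge 2$ (for $m=1$ the cancellation in the original fraction leaves $\ell_{i+1}$, not $\ell_{i+1}^2$), and accordingly you start your induction at $m=2$. The lemma however also covers $j=i+1$, so strictly speaking the $1\times 1$ case $\det(\ell_{i+1})=\ell_{i+1}$ should be checked separately. The paper uses $m=1$ as its base case, which avoids this issue.
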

The sign is included here because it is needed for the inductive proof.
\begin{proof}
    The proof proceeds by induction on $j-i$. 
    For $j-i=1$ the statement~becomes $\det(\ell_{i+1}) = (-1)^0 \ell_{i+1}$ which clearly holds.
    For $j-i\ge 2$ we apply Laplace expansion to the last row of $M_{ij}$.
    When deleting the row and column of $\ell_{j-1}$ we are left with $M_{i(j-1)}$ whose determinant we know by induction hypothesis.
    When deleting the row and column of $\ell_j$, we are left with a lower-triangular matrix whose determinant is $(-1)^{j-i-1}\ell_{i+2}\cdots\ell_j$.
    The proof then finishes with the following computation:
    \begin{align*}
        \det(M_{ij}) 
            &= -\ell_{j-1} \det(M_{i(j-1)}) + (-1)^{j-i-1}\ell_j \cdot \ell_{i+2}\cdots\ell_j
            \\
            &= (-1)^{j-i-1}{\color{red}\ell_{j-1}} \frac{\ell_{i+1}^2+\cdots+\ell_{j-1}^2}{\ell_i\ell_{i+1}{\color{red}\ell_{j-1}}\ell_{j}}\cdot \ell_{i}\cdots \ell_{j} + (-1)^{j-i-1}\ell_j \cdot \ell_{i+2}\cdots\ell_j
            \\
            &= (-1)^{j-i-1}\Big(\frac{\ell_{i+1}^2+\cdots+\ell_{j-1}^2}{\ell_i\ell_{i+1}\ell_{j}{\color{blue}\ell_{j+1}}}\cdot \ell_{i}\cdots \ell_{j} {\color{blue}\ell_{j+1}} \\&\hspace{10em}+ \frac{\ell_j{\color{ForestGreen}\ell_j}}{\color{ForestGreen}\ell_i\ell_{i+1}\ell_{j}\ell_{j+1}} \cdot {\color{ForestGreen}\ell_i\ell_{i+1}}\ell_{i+2}\cdots\ell_j{\color{ForestGreen}\ell_{j+1}}\Big)
            \\
            &= (-1)^{j-i-1}\frac{\ell_{i+1}^2+\cdots+\ell_j^2}{\ell_i\ell_{i+1}\ell_j\ell_{j+1}}\cdot \ell_{i}\cdots \ell_{j+1}.
            \qedhere
    \end{align*}
\end{proof}

\section{Geometric interpretation of the adjoint of a triangle}
\label{sec:appendix_triangle_adjoint}

The adjoint polynomial of a triangle is of degree zero, that is, it is a number. This number is well-defined if we assume that normal vectors in \eqref{eq:Omega_represenation} are normalized.\nls
It is moreover invariant under translation of the triangle, as can be inferred, for example, from \eqref{eq:trangle_adjoint}.
Here we show that the adjoint of a triangle $\Delta$ can be expressed as
$$\adj_\Delta=\frac{\operatorname{Area}(\Delta)}{\operatorname{Circumradius}(\Delta)}.$$

\begin{figure}[h!]
    \centering
    \includegraphics[width=0.45\linewidth]{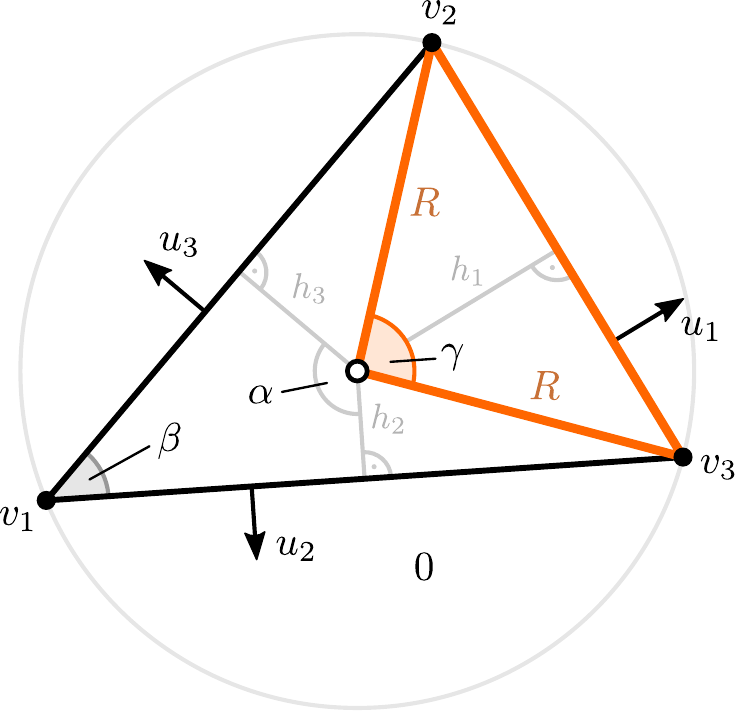}
    \caption{Visualization for the proof of \cref{sec:appendix_triangle_adjoint}.}
    \label{fig:triangle_proof}
\end{figure}


Let $\Delta$ be a triangle with vertices $v_1,v_2$ and $v_3$.
Let $u_i$ and $h_i$ be the normal vector and height of the edge opposite to $v_i$.
By translation invariance we can assume that $\Delta$ is translated so that its circumcenter is the origin.
The circumradius we denote~by $R$.
The three line segments connecting the origin to the vertices dissect $\Delta$ into three triangles with areas $A_1$, $A_2$ and $A_3$, that sum up to $\operatorname{Area}(\Delta)$.
Below~we~compute~the area $A_1$ (see \cref{fig:triangle_proof}).
%
%
%

Let $\alpha:=\angle(u_2,u_3)$ be the angle between the normal vectors $u_2$ and $u_3$.
As shown in \cref{fig:triangle_proof}, the angle $\beta$ at $v_1$ lies opposite to $\alpha$ in a quadrangle with two right angles.
Hence $\beta=\pi-\alpha$.
By the inscribed angle theorem, the angle $\gamma=\angle(v_20v_3)$~is~twice~as large as $\beta$.
Thus, $\gamma=2\pi-2\alpha$.
The highligted triangle $v_20v_3$ is isosceles, and so its area $A_1$ evaluates to
\begin{align*}
    A_1
    =Rh_1\sin(\nicefrac\gamma2)
    =Rh_1\sin(\pi-\alpha)
    =Rh_1\sin(\alpha).
\end{align*}
%
Analogous formulas hold for $A_2$ and $A_3$.
The proof then concludes with the following computation:
\begin{align*}
    \adj_\Delta 
    &= \det\!\begin{pmatrix}
        |\!\!\!\!   & |\!\!\!\!     & |     \\[-0.5ex]
        u_0\!\!\!\! & u_1\!\!\!\!   & u_2   \\
        |\!\!\!\!   & |\!\!\!\!     & |     \\[0.5ex]
        h_0\!\!\!\! & h_1\!\!\!\!   & h_2
    \end{pmatrix} 
    \\[1ex]&= h_1\sin\angle(u_2,u_3)+h_2\sin\angle(u_3,u_1)+h_3\sin\angle(u_1,u_2)
    \\&= \frac{A_1}R + \frac{A_2}R + \frac{A_3}R = \frac AR.
\end{align*}
For the first equality we used Laplace expansion in the last row and the fact that~the determinant of a $(2\times2)$-matrix is the sine of the angle between its columns.

\end{document}